\newtheorem{lem}{Lemma}
\newtheorem{thm}{Theorem}
\newtheorem{prop}{Proposition}
\newtheorem{defn}{Definition}
\newtheorem*{rem}{Remark}
\begin{document}

\title{Completely Order Bounded Maps on Non-Commutative \(L_p\)-Spaces}
\author{Erwin Neuhardt}

\date{December 5, 2019}
\maketitle
Mathematics  Subject  Classification  (2010): Primary 46L07; Secondary 47L25

Keywords: non-commutative \(L_p\)-space, matrix norm, completely positive\\
\noindent\hspace*{5mm}%
map, decomposable map

\begin{abstract}
We define norms on \(L_p(\mathcal{M}) \otimes M_n\)  where \(\mathcal{M}\) is a von Neumann algebra and \(M_n\) is the complex \(n \times n\) matrices.  We show that a linear map \(T: L_p(\mathcal{M}) \to L_q(\mathcal{N})\) is decomposable if \(\mathcal{N}\) is an injective von Neumann algebra, the maps \(T \otimes Id_{M_n}\) have a common upper bound with respect to our defined norms, and \(p = \infty\) or \(q = 1\). For \(2p < q < \infty\) we give an example of a map \(T\) with uniformly bounded maps \(T \otimes Id_{M_n}\) which is not decomposable.
\end{abstract}
\section{INTRODUCTION}
Completely positive maps on von Neumann algebras have been studied extensively and there are some nice results on such maps (see e.g. \cite{Ta}, Ch. IV.3 and \cite{Pi-OpSp}, Ch. 11). When combining the order structure with the vector space structure, it is natural to investigate the linear span of completely positive maps. These are called decomposable maps in \cite{Ha1}, \S1. The decomposable maps from a von Neumann algebra  \(\mathcal{M}\) into an injective von Neumann algebra \(\mathcal{N}\) are the completely bounded maps. This has been shown at about the same time by Haagerup \cite{Ha1}, Paulsen \cite{Pa}, and Wittstock \cite{Wi}. Completely bounded maps use the operator norm on matrices of elements of a von Neumann algebra.

For a von Neumann algebra \(\mathcal{M}\), the non-commutative \(L_p\)-space \(L_p(\mathcal{M}),\) \(1 \le p < \infty,\) can be realized  as (in general) unbounded operators on an Hilbert space. Therefore the \(L_p\)-space itself as well as matrices with elements of an \(L_p\)-space have a natural order given by positive operators. This allows us to define completely positive maps and decomposable maps from one \(L_p\)-space into another one. Then there should be a description of decomposable maps using some norm conditions on matrices of \(L_p\)-spaces.

This question has been partially answered by Pisier \cite{Pi} for linear maps from the Schatten classes \(S_p\) to \(S_p\),   and Arhancet and Kriegler \cite{ArKr}  for  linear maps from \(L_p(\mathcal{M})\) to \(L_p(\mathcal{N})\), where \( \mathcal{M}\) and \(\mathcal{N}\) are semifinite, approximately finite dimensional von Neumann algebras. Junge and Ruan \cite{JuRu} give a description of the decomposable norm for finite rank maps from \(L_p(\mathcal{M})\) to \(L_p(\mathcal{N})\), where \( \mathcal{M}\) and \(\mathcal{N}\) are arbitrary von Neumann algebras. All cited articles require the same \(p \)-index for domain and range space.

Our idea is to derive a norm on matrices of \(L_p\)-spaces from the order structure. This norm is quite similar to the norm used in \cite{ArKr} and \cite{Pi} (see \cite{Pi} equation (1.5)). Using this norm, we can characterize decomposable maps from \(\mathcal{M}\) to \(L_q(\mathcal{N})\) where \(1 \le q \le \infty\) and from \(L_p(\mathcal{M})\) to \(L_1(\mathcal{N})\) where \(1 \le p \le \infty\). In both cases \(\mathcal{N}\) must be injective. Then we give an example of von Neumann algebras \(\mathcal{M}\) and \(\mathcal{N}\) and a linear map from \(L_p(\mathcal{M})\) to \(L_q(\mathcal{N})\) where \( 1 \le p,q < \infty, q > 2p\) which is not decomposable. Therefore this norm cannot characterize decomposable maps for all combinations of \(p\) and \(q\).

The structure of the article is as follows: In Section 2, we give a short description on non-commutative \(L_p\)-spaces in the Haagerup-Terp construction. We also show some properties of matrices of operators. In Section 3, we define our norm on matrices and derive some properties of this norm. In Section 4, we define completely order bounded maps and show that they are decomposable for some combinations of \(p\) and \(q\).

\section{NON-COMMUTATIVE \(\boldsymbol{L_p}\)-SPACES}
Short descriptions of non-commutative \(L_p\)-spaces in the Haagerup-Terp construction can be found in several publications, but the main source is still \cite{Te}. Here, we cite some basic facts from \cite{Te} we will use.
Let \(\mathcal{M}\) be a von Neumann algebra acting on a Hilbert space \(\mathcal{H}\), and let \(\varphi\) be a normal faithful semifinite weight on \(\mathcal{M}\) with modular automorphism group \(\sigma^{\varphi}_t\). Then the crossed product \(\mathcal{M} \rtimes_{\sigma^{\varphi}} \mathbb{R}\) acts on the Hilbert space \(L_2(\mathbb{R},\mathcal{H})\) and is the von Neumann algebra generated by the operators \(\pi(x)\) and \(\lambda(s)\) where
\begin{equation*}
	\pi(x)(\xi(t)) =  \sigma^{\varphi}_{-t}(x)(\xi(t)), \quad x \in \mathcal{M}, t \in \mathbb{R}, \xi \in L_2(\mathbb{R},\mathcal{H})
\end{equation*}
and
\begin{equation*}
	\lambda(s)(\xi(t)) = \xi(t-s), \quad s,t \in \mathbb{R}, \xi \in L_2(\mathbb{R},\mathcal{H}).
\end{equation*}
For \(s \in \mathbb{R}\) the let \(W(s)\) be the unitary operator on \( L_2(\mathbb{R},\mathcal{H})\) which is defined by
\begin{equation*}\
	W(s)(\xi(t)) = e^{-\textrm{i}st}\xi(t), \quad s,t \in \mathbb{R}, \xi \in L_2(\mathbb{R},\mathcal{H}).
\end{equation*}
The dual action \(\theta\) is then defined by
\begin{equation*}\
	\theta_s(x) = W(s)xW(s)^*, \quad x \in \mathcal{M} \rtimes_{\sigma^{\varphi}} \mathbb{R}, s \in \mathbb{R}.
\end{equation*}
The elements of \(\mathcal{M}\) are the fix points under \(\theta\) when \( \mathcal{M}\) is identified with \(\pi(\mathcal{M})\):
\begin{equation*}\
	\pi(\mathcal{M}) = \{ x \in \mathcal{M} \rtimes_{\sigma^{\varphi}}\mathbb{R}: \theta_s(x) = x \textrm{ for all } s \in \mathbb{R} \}.
\end{equation*}
The crossed product \(\mathcal{M} \rtimes_{\sigma^{\varphi}} \mathbb{R}\) has a unique normal faithful semifinite trace \(\tau\) which satisfies
\begin{equation*}
	\tau(\theta_s(x)) = e^{-s}\tau(x) \textrm{ for all } x \in (\mathcal{M} \rtimes_{\sigma^{\varphi}} \mathbb{R})_+, s \in \mathbb{R}.
\end{equation*}
The existence of the trace \(\tau\) allows to consider the \(\tau \)-measurable operators. These are all closed densely defined operators \(a\) affiliated with \(\mathcal{M} \rtimes_{\sigma^{\varphi}} \mathbb{R}\) which satisfy: For every \(\varepsilon \in \mathbb{R}_+\) there exists a projection \(p \in \mathcal{M} \rtimes_{\sigma^{\varphi}} \mathbb{R}\) such that \( pL_2(\mathbb{R},\mathcal{H}) \subseteq \mathcal{D}(a)\) and \(\tau(1-p) \le \varepsilon\).
A subspace \(\mathcal{D}\) of \(L_2(\mathbb{R},\mathcal{H})\) is called \( \tau\)-dense if for every \( \varepsilon > 0 \) there is a projection \(  p \in \mathcal{M} \) such that \( pL_2(\mathbb{R},\mathcal{H}) \subseteq \mathcal{D} \) and \( \tau(1-p) \le \varepsilon \). Thus the \( \tau \)-measurable operators are those with a \( \tau \)-dense domain. The closure of a \( \tau \)-measurable operator restricted to a \( \tau \)-dense subspace is unique. Therefore, for proving some property of a \( \tau \)-measurable operator it suffices to prove the property on a \( \tau \)-dense subspace which is contained in the domain of the operator.
The \( \tau \)-measurable operators form a topological \( * \)-algebra. When  two \( \tau \)-measurable operators are added or multiplied, we have to create the closure of the sum or the product which always exist and are unique. More details for \( \tau \)-measurable operators can be found in \cite{Te}, Chapter I or \cite{Ta}, Chapter IX.2.

The action \( \theta \) can be extended to all \( \tau \)-measurable operators. The space \( L_p(\mathcal{M}) \), \( 1 \le p \le \infty \), consists of all \( \tau \)-measurable operators \(a\) for which
\begin{equation*}\
	\theta_s(a) = e^{-\frac{s}{p}}(a) \textrm{ for all } s \in \mathbb{R}.
\end{equation*}
There is a linear functional \( tr: L_1(\mathcal{M}) \to \mathbb{C}\) which has positive values for positive operators. If \( a \in L_p(\mathcal{M}) \) has the polar decomposition \( a = u|a| \), then \( u \in \mathcal{M} \) and \( |a| \in L_p(\mathcal{M}) \). The norm on \( L_p(\mathcal{M}) \) is given by
\begin{equation*}\
	\|a\|_p = tr(|a|^p)^{\frac{1}{p}}, \quad a\in L_p(\mathcal{M}).
\end{equation*}
If \( \frac{1}{p} + \frac{1}{q} = \frac{1}{r}, a \in L_p(\mathcal{M}) \) and \( b \in L_q(\mathcal{M}) \) then \( ab \in L_r(\mathcal{M}) \) and
\begin{equation*}\
	\|ab\|_r \le \|a\|_p \|b\|_q.
\end{equation*}
Especially, when \( \frac{1}{p} + \frac{1}{q} = 1\); then we get for \( a \in L_p(\mathcal{M}) \) and \( b \in L_{q}(\mathcal{M}) \) 
\begin{equation*}\
	|tr(ab)| \le \|ab\|_1 \le \|a\|_p \|b\|_{q}.
\end{equation*}
Therefore, the space \( L_{q}(\mathcal{M}) \) is isometric isomorph to the dual space of \( L_p(\mathcal{M}) \). We denote this duality by
\begin{equation*}\
	<a,b> = tr(ab) = tr(ba) \textrm{ for } a \in L_p(\mathcal{M}) \textrm{ and } b \in L_{q}(\mathcal{M}).
\end{equation*}

For \( n \in \mathbb{N} \), we denote the complex \(  n \times n \) matrices by \( M_n \) with the usual trace \( Tr \). If \( a = [a_{ij}] \) is a \( n \times n \) matrix of \( \tau \)-measurable operators and each \( a_{ij} \) acts  on the Hilbert space \( \mathcal{K} = L_2(\mathbb{R}, \mathcal{H})\), then \( a \) acts on the Hilbert space \( \mathcal{K}^n \). The operator \( a \) is densely defined, has a unique closure which we denote again by \( a \), and is \( \tau \otimes Tr \)-measurable. Especially, the elements of \( L_p(\mathcal{M}) \otimes M_n \) are \( \tau \otimes Tr \)-measurable operators. For \( \frac{1}{p} + \frac{1}{p'} = 1 \), \( n \in \mathbb{N} \), \( a = [a_{ij}] \in L_p(\mathcal{M}) \otimes M_n\), and \( b = [b_{ij}] \in L_{p'}(\mathcal{M}) \otimes M_n\), we define the duality of \( a \) and \( b \) by
\begin{equation*}\
	<a,b> = \sum_{i,j = 1}^{n}<a_{ij},b_{ji}>
\end{equation*}
The positive operators in \( L_p(\mathcal{M}) \) will be denoted by \( L_p(\mathcal{M})_+ \), and the positive operators in \( L_p(\mathcal{M}) \otimes M_n \) will be denoted by \( (L_p(\mathcal{M}) \otimes M_n)_+ \).

If \(1 \le p \le \infty\), \(\mathcal{M}\) is a von Neumann algebra, and \(L_p(\mathcal{M})\) acts on the Hilbert space \(\mathcal{K}\), then \(L_p(\mathcal{M}) \otimes M_n^{op}\) acts on the Hilbert space \(\mathcal{K}^n\) by
\begin{equation*}
	\begin{bmatrix} f_{ij} \end{bmatrix} \begin{bmatrix} \xi_i \end{bmatrix} = \begin{bmatrix}
	 \sum_{i=1}^{n} f_{ij}\xi_i \end{bmatrix}
\end{equation*}

Next, we describe some relationships between matrices and their elements.

\begin{lem}\label{orderSelfAdjoint}
	Let \( \mathcal{M} \) be a von Neumann algebra acting on a Hilbert space \( \mathcal{H} \) with a normal faithful semifinite trace \( \tau \), and let \( a \) and \( b \) be self-adjoint \( \tau \)-measurable operators. Then the following are equivalent:
	
	\begin{itemize} \item[(i)] \( -a \le b \le a \).
	
	\item[(ii)] The matrix \(
	\begin{bmatrix} a & b \\ b & a \end{bmatrix}
	\) is positive.
	\end{itemize}
\end{lem}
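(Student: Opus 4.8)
The plan is to diagonalize the $2\times 2$ block matrix by a single scalar unitary and thereby reduce both implications to the triviality that a diagonal matrix of self-adjoint operators is positive exactly when each diagonal entry is. Write $M=\begin{bmatrix} a & b \\ b & a\end{bmatrix}$, acting on $\mathcal{H}^2$, and let
\[
U=\frac{1}{\sqrt 2}\begin{bmatrix} 1 & 1 \\ 1 & -1\end{bmatrix},
\]
regarded as a self-adjoint unitary with scalar entries in $1\otimes M_2\subseteq \mathcal{M}\otimes M_2$. Conjugation by such a bounded element of the algebra preserves $\tau\otimes Tr$-measurability and, being a unitary, preserves positivity. A direct multiplication gives the algebraic identity $UMU^{*}=\begin{bmatrix} a+b & 0 \\ 0 & a-b\end{bmatrix}$, where $a+b$ and $a-b$ denote the unique closures of the algebraic sum and difference.

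First I would make this identity rigorous at the level of $\tau$-measurable operators. Since $a$ and $b$ are $\tau$-measurable, there is a $\tau$-dense subspace $\mathcal{D}\subseteq\mathcal{D}(a)\cap\mathcal{D}(b)$ on which $a\pm b$ act as genuine sums; on $\mathcal{D}\oplus\mathcal{D}$, which is $\tau\otimes Tr$-dense in $\mathcal{H}^2$, the matrix products defining $UMU^{*}$ and $\mathrm{diag}(a+b,a-b)$ agree entrywise. Because the closure of a $\tau$-measurable operator is determined by its restriction to any $\tau$-dense subspace of its domain, the two operators have the same closure, so the identity holds as an equality of $\tau\otimes Tr$-measurable operators.

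With the identity in hand both directions follow simultaneously. Since conjugation by $U$ is an order isomorphism of the self-adjoint $\tau\otimes Tr$-measurable operators, $M\ge 0$ if and only if $\begin{bmatrix} a+b & 0 \\ 0 & a-b\end{bmatrix}\ge 0$, and a diagonal matrix of self-adjoint measurable operators is positive precisely when both diagonal entries are positive. Thus $M\ge 0$ if and only if $a+b\ge 0$ and $a-b\ge 0$, which is exactly $-a\le b\le a$; this is the equivalence of (i) and (ii).

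The main obstacle is the unbounded-operator bookkeeping rather than any analytic difficulty: one must be sure that positivity of the block matrix means that its closure is a positive measurable operator, and that conjugation by $U$ commutes with taking closures. I expect to handle this entirely through the quadratic form on the common $\tau$-dense domain, where for $(\xi,\eta)\in\mathcal{D}\oplus\mathcal{D}$ one has the polarization-type identity
\[
\langle M(\xi,\eta),(\xi,\eta)\rangle=\tfrac12\langle (a+b)(\xi+\eta),\xi+\eta\rangle+\tfrac12\langle (a-b)(\xi-\eta),\xi-\eta\rangle,
\]
so that positivity of $M$ on this dense set is manifestly equivalent to positivity of $a+b$ and $a-b$, sidestepping delicate questions about the domains of the abstract closures. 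The specializations $\eta=\pm\xi$ recover the implication (ii)$\Rightarrow$(i) directly.
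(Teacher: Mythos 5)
Your proof is correct, but it takes a genuinely different route from the paper. The paper treats the two implications separately: for (ii) \(\Rightarrow\) (i) it tests the quadratic form of \(\left[\begin{smallmatrix} a & b \\ b & a \end{smallmatrix}\right]\) on vectors \(\left[\begin{smallmatrix} \xi \\ \pm\xi \end{smallmatrix}\right]\) with \(\xi\) in the \(\tau\)-dense subspace \(\mathcal{D}(a) \cap \mathcal{D}(b)\); for (i) \(\Rightarrow\) (ii) it first invokes the bounded case (citing Effros--Ruan, Proposition 1.2.5) and then reduces the unbounded case to it by compressing with an increasing sequence of projections \(p_n\) satisfying \(p_n\mathcal{H} \subseteq \mathcal{D}(a) \cap \mathcal{D}(b)\) and \(\tau(1-p_n) \to 0\), concluding from \(\tau\)-density of \(\bigcup_n p_n\mathcal{H}\). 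You instead prove the single identity \(UMU^* = \mathrm{diag}(a+b,\, a-b)\) with the scalar unitary \(U\), verify it on \(\mathcal{D} \oplus \mathcal{D}\), and promote it to an equality of \(\tau \otimes Tr\)-measurable operators by uniqueness of closures on \(\tau\)-dense domains (a fact recorded in Section 2 of the paper); both implications then follow at once, since unitary conjugation preserves positivity, a diagonal matrix of self-adjoint measurable operators is positive exactly when its entries are (its closure is the direct sum), and \(a+b \ge 0\), \(a-b \ge 0\) is precisely (i). Your route is shorter and symmetric, needs no external matrix fact and no projection approximation, while the paper's route reduces to the standard bounded \(2\times 2\) fact and rehearses the cut-down technique it reuses elsewhere. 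One remark: your closing ``form-level'' variant is actually the less safe path, not the safer one --- passing from positivity of a quadratic form on a \(\tau\)-dense subspace to positivity of the closed operator is exactly the delicate step (it requires a compression or convergence-in-measure argument, which the paper also uses only implicitly in its final line), whereas your primary closure-identity argument genuinely avoids it and should be kept as the main proof.
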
 
\begin{proof}
	We show first that (ii) implies (i): Let \( \mathcal{D}(a)  \) and \( \mathcal{D}(b) \) denote the domains of \( a \) and \( b \). Let \( \mathcal{D} = \mathcal{D}(a) \cap \mathcal{D}(b) \). Then \( \mathcal{D} \) is a \( \tau \)-dense subspace of \( \mathcal{H} \). For \( \xi, \eta \in \mathcal{D} \), we get\label{key}
	\begin{equation}\label{aAndbSelfadjoint}
		0 \le \dfrac{1}{2} \left(
		\begin{bmatrix}  a & b \\ b & a \end{bmatrix} 
		\begin{bmatrix} \xi \\ \xi \end{bmatrix}
		\Bigg \vert
		\begin{bmatrix} \xi \\ \xi \end{bmatrix}
		\right) =
		\left(  \left(a + b\right) \xi | \xi\right).
	\end{equation}
	Hence \( -a \le b \) on \( \mathcal{D} \). By replacing the vector \( \left[\begin{smallmatrix} \xi \\ \xi \end{smallmatrix}\right] \) in \eqref{aAndbSelfadjoint} with \( \left[\begin{smallmatrix} \xi \\ -\xi \end{smallmatrix}\right] \), we get \( b  \le a \).
	
	For the implication (i) \( \Rightarrow \) (ii), we assume first that \( a \) and \( b \) are bounded. Then it follows from \cite{EfRu}, Proposition 1.2.5 that
	\( \left[\begin{smallmatrix} a & b \\ b & a \\ \end{smallmatrix} \right] \) 
	is positive. Since \( \mathcal{D}(a) \cap \mathcal{D}(b) \) is \( \tau \)-dense, there is a sequence of projections \( \left( p_n\right)^{\infty}_{n = 1}\) in \( \mathcal{M} \) such that \( p_n \le p_{n+1} \) for all \( n \in \mathbb{N} \), \( \tau(1-p_n) \to 0 \) as \( n \to \infty \), and \( p_n\mathcal{H} \subseteq  \mathcal{D}(a) \cap \mathcal{D}(b) \). Hence \( p_n a p_n \) and \( p_n b p_n \) are bounded operators and for all \( n \in \mathbb{N} \)
	\begin{equation*}
		-p_nap_n \le p_nbp_n \le pap_n.
	\end{equation*}
	Therefore, for \( \xi, \eta \in p_n\mathcal{H} \), we have
	\begin{align*}
		\left(
		\begin{bmatrix} a & b \\ b & a \end{bmatrix} 
		\begin{bmatrix} \xi \\ \eta \end{bmatrix}
		\middle |
		\begin{bmatrix} \xi \\ \eta \end{bmatrix}
		\right) &=
		\left(
		\begin{bmatrix} a & b \\ b & a \end{bmatrix} 
		\begin{bmatrix} p_n\xi \\ p_n\eta \end{bmatrix}
		\middle |
		\begin{bmatrix}p_n\xi \\ p_n\eta \end{bmatrix}
		\right) \\
		&=\left(
		\begin{bmatrix} p_nap_n & p_nbp_n \\ p_nbp_n & p_nap_n \end{bmatrix} 
		\begin{bmatrix} \xi \\ \eta \end{bmatrix}
		\middle |
		\begin{bmatrix} \xi \\ \eta \end{bmatrix}
		\right) \ge 0.
	\end{align*}
	Since the union \( \bigcup\limits_{n \in \mathbb{N}} p_n\mathcal{H}\) is \( \tau \)-dense, \(\left[ 
	\begin{smallmatrix} 
	a & b \\ b & a 
	\end{smallmatrix} \right] \) is positive .
\end{proof}

For \( n \in \mathbb{N}, 1_n \) denotes the unit matrix in \( M_n \). For a \(\tau\)-measurable operator \(a\) let supp\((a)\) denote the smallest projection which fulfils \(\textrm{supp}(a)\cdot a = a \cdot\textrm{supp}(a) = a \). If \( a \in L_p(\mathcal{M}) \) then supp\( (a) \in M \) although \( a \) is \( \tau \)-measurable with respect to a larger algebra (see \cite{Te}, Proposition II.4).

\begin{lem}\label{equalWhenLessSupp}
	Let \( \mathcal{M} \) be a von Neumann algebra with a normal faithful semifinite trace \( \tau \). Let \( a, b, c_1, c_2\) be \( \tau \)-measurable operators for which holds:
	\begin{itemize}
		\item [(i)] The operators \(a\) and \(b\) are positive.
		\item [(ii)] \( \textrm{supp}(a)\cdot c_1 \cdot \textrm{supp}(b)  = c_1 \) and \( \textrm{supp}(a)\cdot c_2 \cdot \textrm{supp}(b)  = c_2 \).
		\item [(iii)] \( ac_1b = ac_2b  \).
	\end{itemize}
	Then \( c_1 = c_2 \).
\end{lem}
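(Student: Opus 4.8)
The plan is to set \( c = c_1 - c_2 \) and prove \( c = 0 \). Writing \( p = \textrm{supp}(a) \) and \( q = \textrm{supp}(b) \), hypotheses (ii) and (iii) give, after subtracting, the two relations \( pcq = c \) and \( acb = 0 \) in the \( * \)-algebra of \( \tau \)-measurable operators. So everything reduces to extracting \( c = pcq = 0 \) from \( acb = 0 \), and the whole argument rests on one elementary fact about positive operators: \emph{if \( a \ge 0 \) is \( \tau \)-measurable, \( z \) is \( \tau \)-measurable, and \( az = 0 \), then \( \textrm{supp}(a)\,z = 0 \).}

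First I would prove this sublemma. Let \( e_n = \chi_{[1/n,\infty)}(a) \) be the spectral projections of \( a \); by the spectral theorem \( e_n \uparrow p \) strongly as \( n \to \infty \). The bounded function \( f_n(t) = t^{-1}\chi_{[1/n,\infty)}(t) \) gives, via functional calculus, a bounded operator \( f_n(a) \) with \( f_n(a)\,a = e_n \). Multiplying \( az = 0 \) on the left by \( f_n(a) \) and using associativity of the \( \tau \)-measurable product yields \( e_n z = 0 \), that is, \( e_n(z\xi) = 0 \) for every \( \xi \) in the (\( \tau \)-dense) domain of \( z \). Fixing such a \( \xi \) and letting \( n \to \infty \), the strong convergence \( e_n \to p \) gives \( p(z\xi) = 0 \); since this holds on the whole domain of \( z \), we conclude \( pz = 0 \).

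The delicate point --- and the one I expect to be the main obstacle --- is exactly this passage to the limit. One is tempted to argue that \( e_n \to p \) and that multiplication is continuous, hence \( e_n z \to pz \); but the spectral projections \( \chi_{(0,1/n)}(a) \) near \( 0 \) can carry infinite trace, so \( e_n \) need not converge to \( p \) in the measure topology and joint continuity of multiplication is unavailable. Working vectorwise with the genuine strong convergence \( e_n(z\xi) \to p(z\xi) \) on the domain of \( z \) sidesteps this and is what makes the step rigorous. (A variant that avoids the explicit cut-offs is to note \( z^*az = 0 \), whence \( (a^{1/2}z)^*(a^{1/2}z) = 0 \) and \( a^{1/2}z = 0 \); but since \( \textrm{supp}(a^{1/2}) = p \) one still needs the same limiting argument to pass from \( a^{1/2}z = 0 \) to \( pz = 0 \).)

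Finally I would apply the sublemma twice, using the involution to exploit the symmetry between \( a \) and \( b \). Applied to \( z = cb \), it turns \( acb = a(cb) = 0 \) into \( pcb = 0 \). Taking adjoints and using \( a = a^* \), \( b = b^* \) gives \( b(c^*p) = (pcb)^* = 0 \), so a second application of the sublemma to \( b \) and \( w = c^*p \) yields \( qc^*p = 0 \). Taking adjoints once more returns \( pcq = (qc^*p)^* = 0 \). Since \( pcq = c \), this gives \( c = 0 \), i.e. \( c_1 = c_2 \).
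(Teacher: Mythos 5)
Your proof is correct, but it takes a genuinely different route from the paper. The paper first disposes of the symmetric case \(a=b\) by quoting Lemma 2.2(c) of the cited Radon--Nikodym paper of Schmitt as a black box (the conditions \(\mathrm{supp}(a)\,c\,\mathrm{supp}(a)=c\) and \(aca=0\) force \(c=0\)), and then reduces the general case \(a\neq b\) to the symmetric one by a \(2\times 2\) matrix amplification, setting \(a'=\left[\begin{smallmatrix} a & 0\\ 0 & b\end{smallmatrix}\right]\) and \(c'=\left[\begin{smallmatrix} 0 & c\\ 0 & 0\end{smallmatrix}\right]\) and checking that \(a',c'\) satisfy the hypotheses of the symmetric case. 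You instead prove the analytic core from scratch --- the sublemma that \(az=0\) implies \(\mathrm{supp}(a)z=0\), via the cut-offs \(f_n(a)\) with \(f_n(a)a=e_n\) and vectorwise strong convergence \(e_n\to\mathrm{supp}(a)\) on the domain of \(z\) --- and you handle the asymmetry \(a\neq b\) by taking adjoints and applying the sublemma once on each side, rather than by amplification. Both treatments are sound: the paper's is shorter on the page because the delicate limiting argument is hidden inside the citation, whereas yours is self-contained and makes explicit precisely the point where a naive argument would break (the projections \(\chi_{(0,1/n)}(a)\) may have infinite trace, so one cannot use convergence in the measure topology and joint continuity of multiplication, and must instead argue vectorwise on the \(\tau\)-dense domain). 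Your adjoint-symmetry step is a clean substitute for the matrix trick, and your explicit use of associativity of the product in the \(*\)-algebra of \(\tau\)-measurable operators, together with the principle that a \(\tau\)-measurable operator vanishing on a \(\tau\)-dense subspace is zero, is exactly the machinery the paper itself sets up in Section 2.
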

\begin{proof}
	By putting \(c = c_1 - c_2 \), we may assume that \(c_2 = 0\).
	First let \(a = b\). Then (ii) can be formulated as \(\textrm{supp}(a)\cdot c \cdot \textrm{supp}(a)  = c\) and (iii) as \(aca = 0\). This means that the left support and the right support of \(c\) are less than or equal to supp\((a)\). Thus \(c\) fulfils the conditions of \cite{Schm1}, Lemma 2.2 (c), and therefore  \(c = 0.\) 
	For the general case, we put \(a' = \left[\begin{smallmatrix}
	a & 0 \\ 0 & b \end{smallmatrix}\right] \) and \(c' = \left[\begin{smallmatrix} 0 & c \\ 0 & 0 \end{smallmatrix}\right].\) Then
	\begin{equation*}
		\textrm{supp}(a') = \begin{bmatrix}
		\textrm{supp}(a) & 0 \\ 0 & \textrm{supp}(b) \end{bmatrix},
	\end{equation*}
	\begin{equation*}
		 a'c'a' = \begin{bmatrix}
		a & 0 \\ 0 & b \end{bmatrix}
		\begin{bmatrix} 0 & c \\ 0 & 0 \end{bmatrix}
		\begin{bmatrix} a & 0 \\ 0 & b \end{bmatrix} =
		\begin{bmatrix} 0 & acb \\ 0 & 0 \end{bmatrix} = 0,
	\end{equation*}
	and
	\begin{equation*}
		\textrm{supp}(a')\cdot c' \cdot \textrm{supp}(a') = 
		\begin{bmatrix} 0 & \textrm{supp}(a)\cdot c \cdot \textrm{supp}(b) \\ 0 & 0 \end{bmatrix} = 0.
	\end{equation*}
	Hence, by the first part of the proof, \(c' = 0\) which implies \(c = 0.\)
\end{proof}
\begin{thm}\label{xAsBoundedOp}
	Let \( \mathcal{M} \) be a von Neumann algebra with a normal faithful semifinite weight \( \varphi \) and acting on the Hilbert space \( \mathcal{H}  \), \( 1 \le p\le \infty \), \(n \in \mathbb{N} \), \( f, g \in L_p(\mathcal{M})_+ \), and \( x \in L_p(\mathcal{M}) \otimes M_n \) such that
	\begin{equation*}
		\begin{bmatrix} 
		f \otimes 1_n & x \\ x^* & g \otimes 1_n  
		\end{bmatrix} \ge 0.
	\end{equation*}
	Then there is an operator \( y \in \mathcal{M} \otimes M_n \) such that
	\begin{equation*}
		x = (f^{\frac{1}{2}} \otimes 1_n)y(g^{\frac{1}{2}} \otimes 1_n).
	\end{equation*}
	The operator \( y \) is bounded and \( \|y\|_\infty \le 1 \).
	The operator \( y \) is unique subject to the condition 
	\((\textrm{supp}(f) \otimes 1_n)\cdot y \cdot (\textrm{supp}(g)\otimes 1_n) = y.\)
	If \( f = g \) and \( x \) is self-adjoint, then \( y \) is self-adjoint. If \( f = g \) and \(x\) is positive, then \(y\) is positive.
\end{thm}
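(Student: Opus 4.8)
\section*{Proof proposal}

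The plan is to treat this as the \(\tau\)-measurable analogue of the factorisation of a positive \(2\times2\) operator matrix, the bounded case being \cite{EfRu}, Proposition 1.2.5. Write \(F = f\otimes 1_n\), \(G = g\otimes 1_n\), and let \(\widetilde{\mathcal{M}} = \mathcal{M}\rtimes_{\sigma^\varphi}\mathbb{R}\) be the algebra carrying the trace \(\tau\), so that \(F^{1/2}=f^{1/2}\otimes 1_n\), \(G^{1/2}=g^{1/2}\otimes 1_n\) and \(x\) are \(\tau\otimes Tr\)-measurable and affiliated with \(\widetilde{\mathcal{M}}\otimes M_n\). First I would fix a common \(\tau\otimes Tr\)-dense domain \(\mathcal{D}_0\subseteq\mathcal{K}^n\) lying inside the domains of \(F^{1/2}\), \(G^{1/2}\), \(x\) and \(x^*\) and forming a core for each; a finite intersection of \(\tau\)-dense domains is again \(\tau\)-dense, and by the remark that a property of a measurable operator may be checked on a \(\tau\)-dense subspace it suffices to work there. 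Expanding the positivity of the block matrix on vectors \(\left[\begin{smallmatrix}\xi\\\eta\end{smallmatrix}\right]\) with \(\xi,\eta\in\mathcal{D}_0\) gives \(\|F^{1/2}\xi\|^2 + 2\,\mathrm{Re}(x\eta|\xi) + \|G^{1/2}\eta\|^2\ge 0\); replacing \(\eta\) by a suitable unimodular multiple and then rescaling \(\xi\mapsto s\xi\), \(\eta\mapsto s^{-1}\eta\) and optimising over \(s>0\) yields the key inequality
\begin{equation*}
	|(x\eta|\xi)|\le\|F^{1/2}\xi\|\,\|G^{1/2}\eta\|,\qquad \xi,\eta\in\mathcal{D}_0 .
\end{equation*}

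This inequality says that the sesquilinear form \((G^{1/2}\eta,F^{1/2}\xi)\mapsto(x\eta|\xi)\) is well defined, since it vanishes whenever either argument is \(0\), and is contractive on \(\mathrm{ran}\,G^{1/2}\times\mathrm{ran}\,F^{1/2}\). Next I would use the Riesz representation theorem to produce a contraction \(y_0\) from \(\overline{\mathrm{ran}\,G^{1/2}}\) into \(\overline{\mathrm{ran}\,F^{1/2}}\) with \((x\eta|\xi) = (y_0 G^{1/2}\eta\,|\,F^{1/2}\xi)\), and extend it by \(0\) on \((\mathrm{ran}\,G^{1/2})^\perp\) to a bounded operator \(y\) on \(\mathcal{K}^n\) with \(\|y\|_\infty\le 1\). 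By construction \(x = F^{1/2}yG^{1/2} = (f^{1/2}\otimes1_n)y(g^{1/2}\otimes1_n)\), and since the orthogonal projections onto \(\overline{\mathrm{ran}\,F^{1/2}}\) and \(\overline{\mathrm{ran}\,G^{1/2}}\) are \(\mathrm{supp}(f)\otimes1_n\) and \(\mathrm{supp}(g)\otimes1_n\), the operator \(y\) satisfies the normalisation \((\mathrm{supp}(f)\otimes1_n)\,y\,(\mathrm{supp}(g)\otimes1_n) = y\).

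The substantial point is to show that \(y\) actually lies in \(\mathcal{M}\otimes M_n\) rather than merely in \(B(\mathcal{K}^n)\), and I would do this in two stages. To get \(y\in\widetilde{\mathcal{M}}\otimes M_n\), set \(e_k = \mathbf{1}_{[1/k,k]}(f)\), \(e_k' = \mathbf{1}_{[1/k,k]}(g)\) and let \(f_k,g_k\) be the bounded Borel functions of \(f,g\) given by \(\lambda\mapsto\lambda^{-1/2}\mathbf{1}_{[1/k,k]}(\lambda)\), so \(f_k,g_k\in\widetilde{\mathcal{M}}\) and \(f_kf^{1/2}=e_k\), \(g_kg^{1/2}=e_k'\); then \((f_k\otimes1_n)\,x\,(g_k\otimes1_n) = (e_k\otimes1_n)\,y\,(e_k'\otimes1_n)\), the left-hand side being a bounded operator affiliated with \(\widetilde{\mathcal{M}}\otimes M_n\) and hence in \(\widetilde{\mathcal{M}}\otimes M_n\). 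As \(k\to\infty\), \(e_k\uparrow\mathrm{supp}(f)\) and \(e_k'\uparrow\mathrm{supp}(g)\) strongly, so by the normalisation this uniformly bounded net converges strongly to \(y\), and since \(\widetilde{\mathcal{M}}\otimes M_n\) is strongly closed we get \(y\in\widetilde{\mathcal{M}}\otimes M_n\). For uniqueness, if \(y'\) is another \(\tau\otimes Tr\)-measurable operator with the same factorisation and normalisation then \((f^{1/2}\otimes1_n)(y-y')(g^{1/2}\otimes1_n)=0\), so Lemma \ref{equalWhenLessSupp}, applied to \(\widetilde{\mathcal{M}}\otimes M_n\) with \(a=f^{1/2}\otimes1_n\), \(b=g^{1/2}\otimes1_n\), gives \(y=y'\). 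Finally, conjugating the defining equation by \(W(s)\otimes1_n\) and using \(\theta_s(f^{1/2}) = e^{-\frac{s}{2p}}f^{1/2}\), \(\theta_s(g^{1/2}) = e^{-\frac{s}{2p}}g^{1/2}\), \((\theta_s\otimes\mathrm{id})(x)=e^{-\frac{s}{p}}x\) shows that \((\theta_s\otimes\mathrm{id})(y)\) satisfies the same factorisation and normalisation, the support projections being \(\theta\)-invariant; uniqueness then forces \((\theta_s\otimes\mathrm{id})(y)=y\) for all \(s\), so \(y\) is a fixed point of the dual action and therefore \(y\in\mathcal{M}\otimes M_n\). When \(f=g\) and \(x=x^*\), the operator \(y^*\) also factorises \(x\) with the correct support, so \(y=y^*\) by uniqueness; and when moreover \(x\ge0\), the identity \((y(f^{1/2}\otimes1_n)\xi\,|\,(f^{1/2}\otimes1_n)\xi)=(x\xi|\xi)\ge0\), together with \(y\) vanishing off \(\mathrm{supp}(f)\otimes1_n\) and the density of \(\mathrm{ran}(f^{1/2}\otimes1_n)\) in \((\mathrm{supp}(f)\otimes1_n)\mathcal{K}^n\), yields \(y\ge0\).

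I expect the main obstacle to be the bookkeeping in passing from the bounded operator \(y\in B(\mathcal{K}^n)\) to membership in \(\mathcal{M}\otimes M_n\): one must check that the truncated products really are bounded and affiliated, that the spectral projections \(e_k,e_k'\) genuinely lie in \(\widetilde{\mathcal{M}}\) while failing to be \(\theta\)-invariant, which is exactly why the argument first produces \(\widetilde{\mathcal{M}}\otimes M_n\) and only afterwards upgrades to \(\mathcal{M}\otimes M_n\) through the dual action, and that the manipulations of unbounded \(\tau\)-measurable operators in the key inequality are justified on a fixed \(\tau\)-dense core.
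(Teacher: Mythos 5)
Your proposal is correct, and for most of the argument it runs parallel to the paper's proof: the same Cauchy--Schwarz-type inequality \( |(x\eta|\xi)|\le \|(f^{1/2}\otimes 1_n)\xi\|\,\|(g^{1/2}\otimes 1_n)\eta\| \) on a common \(\tau\)-dense domain, the same bounded sesquilinear form (your Riesz-representation step is the paper's bounded-form extension in different clothing), the same support normalisation, and identical treatments of uniqueness, self-adjointness and positivity via Lemma \ref{equalWhenLessSupp}. The genuine divergence is the intermediate step showing \(y\in\mathcal{M}_1\otimes M_n\) where \(\mathcal{M}_1=\mathcal{M}\rtimes_{\sigma^\varphi}\mathbb{R}\): the paper conjugates by unitaries \(u\in\mathcal{M}_1'\), approximates \((\mathrm{supp}(f)\otimes 1_n)\xi\) by vectors \((f^{1/2}\otimes 1_n)\xi_i\), and invokes \((\mathcal{M}_1\otimes M_n)'=\mathcal{M}_1'\otimes\mathbb{C}\) together with the double commutant theorem, whereas you truncate spectrally, writing \((f_k\otimes 1_n)x(g_k\otimes 1_n)=(e_k\otimes 1_n)y(e_k'\otimes 1_n)\) with \(e_k=\mathbf{1}_{[1/k,k]}(f)\), observing that each truncation is a bounded operator affiliated with \(\mathcal{M}_1\otimes M_n\) and hence in it, and passing to the strong limit using \(e_k\uparrow\mathrm{supp}(f)\) and strong closedness of the von Neumann algebra. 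Both are legitimate; your route is more elementary in that it needs only the spectral calculus of \(f\) and \(g\) and avoids the commutant computation, at the cost of the bookkeeping you flag yourself --- one must verify the identity \((f_k\otimes 1_n)x(g_k\otimes 1_n)=(e_k\otimes 1_n)y(e_k'\otimes 1_n)\) first as equality of forms on a \(\tau\)-dense subspace (e.g.\ on a \(\tau\)-dense preimage \((g_k\otimes 1_n)^{-1}\mathcal{D}_0\)) and then upgrade via uniqueness of closures, since a priori \(y\) is only in \(B(\mathcal{K}^n)\) and the measurable-operator calculus does not yet apply to it. Your final descent to \(\mathcal{M}\otimes M_n\) via the dual action is the same as the paper's, except that you argue at the matrix level through uniqueness of the factorisation while the paper argues entrywise with Lemma \ref{equalWhenLessSupp}; you also correctly note the key structural point that the \(e_k\) are not \(\theta\)-invariant (only \(\mathrm{supp}(f)\) is), which is exactly why the two-stage argument --- first \(\mathcal{M}_1\otimes M_n\), then the fixed-point algebra --- is unavoidable in either version.
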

\begin{proof}
	Let \( \mathcal{M}_1  = \mathcal{M} \rtimes_{\sigma^{\varphi}} \mathbb{R}\), \( \mathcal{K} = L_2(\mathbb{R}, \mathcal{H}) \), and \( \tau \) be the canonical trace on \( \mathcal{M}_1 \). Let \(x = [x_{ij}]\) and
	\begin{equation*}
	\mathcal{D} = \mathcal{D}(f) \cap \mathcal{D}(f^{\frac{1}{2}}) \cap \mathcal{D}(g) \cap \mathcal{D}(g^{\frac{1}{2}}) \cap \bigcap\limits_{i,j=1}^{n} \mathcal{D}(x_{ij}).
	\end{equation*}
	Then \( \mathcal{D} \) is \( \tau \)-dense in \( \mathcal{K} \) and consequently, \( \mathcal{D}^n \) is \( \tau \otimes Tr \)-dense in \( \mathcal{K}^n \). For \( \xi, \eta \in \mathcal{D}^n \), we get
	\begin{align*}
		0 &\le \left(
		\begin{bmatrix} f \otimes 1_n & x \\ x^* & g \otimes 1_n 
		\end{bmatrix} 
		\begin{bmatrix} \xi \\ \eta \end{bmatrix} 
		\Bigg \vert
		\begin{bmatrix} \xi \\ \eta \end{bmatrix}
		\right)\\
		&= ((f \otimes 1_n) \xi | \xi) + (x \eta | \xi) + (x^* \xi | \eta) + ((g \otimes 1_n) \eta | \eta).
	\end{align*}
	This implies
	\begin{equation}\label{scalarProduct}
		- 2 \textrm{Re}((x \eta | \xi)) \le ((f \otimes 1_n) \xi | \xi) + ((g \otimes 1_n) \eta | \eta).
	\end{equation}
	We replace \( \eta \) in \eqref{scalarProduct}  by \( e^{\textrm{i}t}\eta \), and choose a suitable value for \( t \in \mathbb{R} \) to get
	\begin{equation}\label{realPart}
		2  |(x \eta | \xi)| \le ((f \otimes 1_n) \xi | \xi) + ((g \otimes 1_n) \eta | \eta).
	\end{equation}
	Then we replace \( \xi \)  by \( \lambda\xi \), \( \eta \) by \( \frac{1}{\lambda}\eta \) in \eqref{realPart}, minimize over \( \lambda \in \mathbb{R}_+ \), and get
	\begin{equation}\label{squareProduct}
		|(x \eta | \xi)|^2 \le ((f \otimes 1_n) \xi | \xi) ((g \otimes 1_n) \eta | \eta).
	\end{equation}	
	So, we can define the sesquilinear form
	\begin{equation*}
	\begin{split}
		B:~ &(g^{\frac{1}{2}} \otimes 1_n) \mathcal{D}^n \times  (f^{\frac{1}{2}} \otimes 1_n) \mathcal{D}^n \to \mathbb{C} \\
		&((g^{\frac{1}{2}} \otimes 1_n)\eta, (f^{\frac{1}{2}} \otimes 1_n)\xi) \mapsto (x\eta|\xi).
	\end{split}
	\end{equation*}
	If \(\xi, \xi', \eta, \eta' \in \mathcal{D}^n \) with \( (f^{\frac{1}{2}} \otimes 1_n)\xi = (f^{\frac{1}{2}} \otimes 1_n )\xi' \) and \( (g^{\frac{1}{2}} \otimes 1_n)\eta = (g^{\frac{1}{2}} \otimes 1_n)\eta' \), we get
	\begin{align*}
		|(x\eta|\xi) - (x\eta'|\xi')| &\le |(x\eta|\xi-\xi')| + |(x(\eta-\eta')|\xi')|\\
		&\le ((f \otimes 1_n)(\xi - \xi')|\xi - \xi')^{\frac{1}{2}}
		((g \otimes 1_n)\eta|\eta)^{\frac{1}{2}}\\
		& \quad + ((f \otimes 1_n) \xi'|\xi')^{\frac{1}{2}}
		((g \otimes 1_n)(\eta - \eta')|\eta - \eta')^{\frac{1}{2}}\\
		&= 0.
	\end{align*}
	This shows that \( B \) is well defined, and by \eqref{squareProduct}, we get 
	\begin{equation*}
	\begin{split}
		&|B((g^{\frac{1}{2}} \otimes 1_n)\eta, (f^{\frac{1}{2}} \otimes 1_n)\xi)|^2 = |(x\eta|\xi)|^2 \\
		&\quad\le ((g^{\frac{1}{2}} \otimes 1_n ) \eta | (g^{\frac{1}{2}} \otimes 1_n) \eta) 
		((f^{\frac{1}{2}} \otimes 1_n)  \xi | (f^{\frac{1}{2}}\otimes 1_n) \xi).
	\end{split}
	\end{equation*}
	Thus, \( B \) can be extended to a bounded sesquilinear form 
	\begin{equation*}
		B:~(\textrm{supp}(g) \otimes 1_n)\mathcal{K}^n \times (\textrm{supp}(f) \otimes 1_n)\mathcal{K}^n \to \mathbb{C} 
	\end{equation*}
	with norm \( \|B\| \le 1 \). Next we extend B to a bounded sesquilinear form on \( \mathcal{K}^n \times \mathcal{K}^n \) by
	\( B(\eta, \xi) = B((\textrm{supp}(g) \otimes 1_n)\eta, (\textrm{supp}(f) \otimes 1_n)\xi) \textrm{ for } \xi, \eta \in \mathcal{K}^n\). Then \( B \) has still norm \( \|B\| \le 1 \). Therefore, there is an operator \( y \in \mathcal{B}(\mathcal{K}^n) \) with \( (y\eta | \xi) = B(\eta,\xi)\) and \( \|y\|_\infty = \|B\| \le 1 \). By construction, we have
	\begin{equation*}
		(\textrm{supp}(f) \otimes 1_n) \cdot y \cdot (\textrm{supp}(g) \otimes 1_n) = y .
	\end{equation*} 	
	We still have to show that \( y \in \mathcal{M} \otimes M_n \). Let \( u \in \mathcal{M}_1' \) be a unitary element of the commutant of \( \mathcal{M}_1 \), \( \xi, \eta  \in \mathcal{K}^n\). Then there are sequences \( (\xi_i)_{i=1}^{\infty} \) and \( (\eta_i)_{i=1}^{\infty} \) with \( \xi_i, \eta_i \in \mathcal{D}^n \) for all \( i \in \mathbb{N} \),
	\begin{equation*}
		(\textrm{supp}(f) \otimes 1_n)\xi = \lim_{i \rightarrow \infty} (f^{\frac{1}{2}} \otimes 1_n)  \xi_i,
	\end{equation*}
	and 
	\begin{equation*}
		(\textrm{supp}(g) \otimes 1_n)\eta = \lim_{i \to \infty} (g^{\frac{1}{2}} \otimes 1_n)  \eta_i.
	\end{equation*}
	Since \( u \) commutes with \( f \), \( g \), supp\((f)\), and supp\((g)\), we get for all \( i \in \mathbb{N}\)
	\begin{equation*}
	\begin{split}
		&((u^* \otimes 1_n)y(u \otimes 1_n)(g^{\frac{1}{2}} \otimes 1_n ) \eta_i | (f^{\frac{1}{2}} \otimes 1_n) \xi_i) \\
		&= ((u^* \otimes 1_n)(f^{\frac{1}{2}} \otimes 1_n )y 		(g^{\frac{1}{2}} \otimes 1_n) (u \otimes 1_n) \eta_i| \xi_i) \\
		&= ((u^* \otimes 1_n)x(u \otimes 1_n) \eta_i|  \xi_i) \\
		&= (x \eta_i | \xi_i) \\
		&= ((f^{\frac{1}{2}} \otimes 1_n)y(g^{\frac{1}{2}} \otimes1_n ) \eta_i| \xi_i)
	\end{split}
	\end{equation*}
	and therefore 
	\begin{equation*}
	\begin{split}
		&((u^* \otimes 1_n)y(u \otimes 1_n)\eta | \xi ) \\
		&= ((u^* \otimes 1_n) (\textrm{supp}(f) \otimes 1_n) y  
		(\textrm{supp}(g) \otimes 1_n)(u \otimes 1_n) \eta | \xi)\\
		&= ((u^* \otimes 1_n) y (u \otimes 1_n)(\textrm{supp}(g) \otimes 1_n) \eta | (\textrm{supp}(f) \otimes 1_n) \xi)\\
		&= \lim_{i  \rightarrow \infty} ((u^* \otimes 1_n)y(u \otimes 1_n)(g^{\frac{1}{2}} \otimes 1_n) \eta_i | (f^{\frac{1}{2}} \otimes 1_n) \xi_i) \\
		&= \lim_{i  \rightarrow \infty} ((f^{\frac{1}{2}} \otimes 1_n )y(g^{\frac{1}{2}} \otimes1_n) \eta_i| \xi_i) \\
		&= (y\eta|\xi).
	\end{split}
	\end{equation*}
	Every operator in \( \mathcal{M}_1^{'} \) is a finite linear combination of unitaries and \linebreak \( (\mathcal{M}_1 \otimes M_n)^{'} = \mathcal{M}_1^{'} \otimes \mathbb{C} \). Hence \( y \in \mathcal{M}_1 \otimes M_n \).	
	We still have to show that \( y \in \mathcal{M} \otimes M_n \). Let \( y = [y_{ij}] \) with \(y_{ij} \in \mathcal{M}_1 \) for \( i, j \in \{1, \dots n\}\). Let \( s \in \mathbb{R} \). Then we have 
	\begin{align*}
		f^{\frac{1}{2}} y_{ij} g^{\frac{1}{2}} &= x_{ij} = e^{\frac{s}{p}} \theta_s(x_{ij}) \\
		&= e^{\frac{s}{p}} \theta_s(f^{\frac{1}{2}}) \theta_s(y_{ij})\theta_s(g^{\frac{1}{2}}) = f^{\frac{1}{2}}\theta_s(y_{ij})g^{\frac{1}{2}}.
	\end{align*}
	Since 
	\begin{equation*}
		y_{ij} = \textrm{supp}(f) \cdot y_{ij} \cdot \textrm{supp}(g)
	\end{equation*}
	and
	\begin{equation*}
		\theta_s(y_{ij}) = \textrm{supp}(f) \cdot \theta_s(y_{ij}) \cdot \textrm{supp}(g),
	\end{equation*}
	we can apply Lemma \ref{equalWhenLessSupp} and get \( y_{ij} = \theta_s(y_{ij}) \). Hence \( y_{ij} \in \mathcal{M} \). For the uniqueness of the decomposition, suppose that there is another \( \tilde{y} \in \mathcal{M} \otimes M_n \) such that
	\begin{equation*}
		x = (f^{\frac{1}{2}} \otimes 1_n) \tilde{y} (g^{\frac{1}{2}} \otimes 1_n) 
	\end{equation*}
	and
	\begin{equation*}
		(\textrm{supp}(f) \otimes 1_n) \cdot \tilde{y} \cdot (\textrm{supp}(g) \otimes 1_n) = \tilde{y}.
	\end{equation*}
	Then we can apply Lemma \ref{equalWhenLessSupp} and get \( y = \tilde{y} \).
	Now assume that \( x \) is self-adjoint and \( f = g \). Since \( x = x^* \), we get
	\begin{equation*}
	\begin{split}
		(f^{\frac{1}{2}} \otimes 1_n) y (f^{\frac{1}{2}} \otimes 1_n ) &= x = \frac{1}{2}(x + x^*) \\
		&= (f^{\frac{1}{2}} \otimes 1_n) \frac{1}{2} (y + y^*) (f^{\frac{1}{2}} \otimes 1_n).
	\end{split}
	\end{equation*}
	Again, we apply Lemma \ref{equalWhenLessSupp}, use the uniqueness of \( y \), and get \( y = y^*. \)
	Finally, assume that \(x\) is positive. For \(\xi \in \mathcal{K}^n\) there is a sequence \((\xi_i)_{i=1}^{\infty} \) with \(\xi_i \in \mathcal{D}^n\) for all \(i \in \mathbb{N}\) and 
	\begin{equation*}
		(\textrm{supp}(f) \otimes 1_n)\xi = \lim_{i  \rightarrow \infty} (f^{\frac{1}{2}} \otimes 1_n) \xi_i.
	\end{equation*}
	Then 
	\begin{equation*}
	\begin{split}
		(y\xi|\xi) &= (y(\textrm{supp}(f) \otimes 1_n)\xi|(\textrm{supp}(f) \otimes 1_n)\xi) \\
		&= \lim_{i \rightarrow \infty} (y(f^{\frac{1}{2}} \otimes 1_n ) \xi_i|(f^{\frac{1}{2}} \otimes  1_n) \xi_i| \xi_i) \\
		&= \lim_{i \rightarrow \infty} (x\xi_i|\xi_i) \ge 0.
	\end{split}
	\end{equation*}
	Since \(\xi\) was arbitrary, \(y\) is positive.
\end{proof}
\begin{lem}\label{fGreaterX}
	Let \( \mathcal{M} \) be a von Neumann algebra, \( 1 \le p \le \infty \), \(n  \in \mathbb{N}\), and \( x \in L_p(\mathcal{M}) \otimes M_n\). Then there exists \( f \in L_p(\mathcal{M})_+ \) such that
	\begin{equation*}
		\begin{bmatrix} f \otimes 1_n & x \\ x^* & f \otimes 1_n  
		\end{bmatrix} \ge 0.
	\end{equation*}
\end{lem}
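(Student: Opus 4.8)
The plan is to build the desired \( f \) by decomposing \( x \) entry by entry and then reassembling the pieces so that the two diagonal blocks of the \( 2n\times 2n \) matrix become \emph{diagonal} matrices in \( M_n \); such diagonal blocks can be dominated by a single \( f\otimes 1_n \) coordinatewise, which is the whole point.

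First I would handle a single entry. Writing the polar decomposition \( x_{ij}=u_{ij}|x_{ij}| \) with \( u_{ij}\in\mathcal M \) and \( |x_{ij}|\in L_p(\mathcal M)_+ \), and using \( u_{ij}|x_{ij}|u_{ij}^*=|x_{ij}^*| \), one checks that
\[
\begin{bmatrix}|x_{ij}^*| & x_{ij}\\ x_{ij}^* & |x_{ij}|\end{bmatrix}
=\begin{bmatrix}u_{ij}&0\\0&1\end{bmatrix}
\begin{bmatrix}|x_{ij}|^{1/2}\\ |x_{ij}|^{1/2}\end{bmatrix}
\begin{bmatrix}|x_{ij}|^{1/2}&|x_{ij}|^{1/2}\end{bmatrix}
\begin{bmatrix}u_{ij}^*&0\\0&1\end{bmatrix}\ge0,
\]
so each entry yields a positive \( 2\times2 \) matrix over \( L_p(\mathcal M) \) (a measurable operator matrix, positive because it is a conjugate of \( v v^* \) by a bounded operator).

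Next I assemble these into the big matrix. For each pair \( (i,j) \) I embed the above \( 2\times2 \) matrix into the coordinates (top-\(i\), bottom-\(j\)) by the isometric coordinate inclusion, i.e.\ pad it with zeros, which preserves positivity, and then sum over all \( i,j \). The off-diagonal blocks reproduce exactly \( x \) and \( x^* \), while the two diagonal blocks become the \emph{diagonal} matrices \( D_1=\mathrm{diag}_i\big(\sum_j|x_{ij}^*|\big) \) and \( D_2=\mathrm{diag}_j\big(\sum_i|x_{ij}|\big) \). Hence \( \left[\begin{smallmatrix}D_1 & x\\ x^*&D_2\end{smallmatrix}\right]\ge0 \). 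Finally I set \( f=\sum_{i,j}\big(|x_{ij}|+|x_{ij}^*|\big)\in L_p(\mathcal M)_+ \). Since \( f\ge\sum_j|x_{ij}^*| \) and \( f\ge\sum_i|x_{ij}| \) for every index, the differences \( f\otimes 1_n-D_1 \) and \( f\otimes 1_n-D_2 \) are diagonal with positive \( L_p(\mathcal M) \)-entries, hence positive, and adding the positive block \( \mathrm{diag}(f\otimes1_n-D_1,\,f\otimes1_n-D_2) \) gives the claim.

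The main obstacle is precisely the constraint that the diagonal blocks be of the restricted form \( f\otimes1_n \) (a single operator repeated across the matrix indices) rather than a general positive element of \( L_p(\mathcal M)\otimes M_n \): a naive candidate such as \( \big(\sum_i h_{ii}\big)\otimes 1_n \) does \emph{not} dominate an arbitrary positive \( h \). The device that circumvents this is to arrange the reassembly so that the diagonal blocks are themselves diagonal in \( M_n \), after which domination by \( f\otimes1_n \) collapses to the trivial scalar comparison \( f\ge (D_k)_{ii} \). The only points requiring a little care are the identity \( u_{ij}|x_{ij}|u_{ij}^*=|x_{ij}^*| \) (which follows from \( u_{ij}^*u_{ij}=\mathrm{supp}(|x_{ij}|) \)) and the routine verification that padding a positive measurable operator matrix with zero rows and columns preserves positivity.
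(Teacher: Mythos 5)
Your proof is correct, but it takes a genuinely different route from the paper's. The paper reduces to elementary tensors: it writes \(x\) as a finite linear combination of terms \(y\otimes\alpha\) with \(y\in L_p(\mathcal{M})_+\) and \(\alpha\in M_n\), and for such a term exhibits the positive matrix with diagonal \(\|\alpha\|y\otimes 1_n\) by conjugating the positive scalar matrix \(\left[\begin{smallmatrix}\|\alpha\|1_n & \alpha\\ \alpha^* & \|\alpha\|1_n\end{smallmatrix}\right]\) with \(\mathrm{diag}(y^{\frac{1}{2}}\otimes 1_n,\,y^{\frac{1}{2}}\otimes 1_n)\); summing over the finitely many terms (positivity being stable under sums of \(\tau\)-measurable operators) yields \(f\). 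You instead decompose \(x\) entrywise, using the polar decomposition of each \(x_{ij}\) to obtain \(\left[\begin{smallmatrix}|x_{ij}^*| & x_{ij}\\ x_{ij}^* & |x_{ij}|\end{smallmatrix}\right]\ge 0\) --- which is literally the inequality the paper proves later as \eqref{absoluteValueGreaterThanX}, inside the proof of part (v) of the matrix-norm theorem, by the same \(vv^*\)-factorization --- and then pad, sum, and dominate the resulting diagonal blocks \(D_1,D_2\) by \(f\otimes 1_n\) with \(f=\sum_{i,j}(|x_{ij}|+|x_{ij}^*|)\). Your individual steps all check out: the identity \(u_{ij}|x_{ij}|u_{ij}^*=|x_{ij}^*|\) is the standard consequence of \(u_{ij}^*u_{ij}=\mathrm{supp}(|x_{ij}|)\), padding by a coordinate isometry preserves positivity, and your caveat that \((\sum_i h_{ii})\otimes 1_n\) need not dominate a general positive \(h\) is accurate (the map \(h\mapsto \mathrm{Tr}(h)1_n-h\) on \(M_n\) is positive but not completely positive), so the device of making the diagonal blocks diagonal is genuinely needed rather than pedantic. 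As for what each approach buys: the paper's argument is shorter, piggybacking on the standard fact that \(L_p(\mathcal{M})\otimes M_n\) is spanned by tensors with positive left legs, but it leaves \(f\) implicit; yours produces the explicit witness \(f=\sum_{i,j}(|x_{ij}|+|x_{ij}^*|)\) and hence the quantitative estimate \(\|x\|_{p,n}\le\|f\|_p\le 2\sum_{i,j}\|x_{ij}\|_p\), which essentially re-derives (up to the factor \(2\)) the upper bound in part (v) of the matrix-norm theorem as a free byproduct.
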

\begin{proof}
	Since every \( x \in L_p(\mathcal{M}) \otimes M_n\) is a finite linear combination of elements of the form \( y \otimes \alpha \), where \( y \in L_p(\mathcal{M})_+ \) and \( \alpha \in M_n \), it suffices to prove the statement for \( x =  y \otimes \alpha \). Let \( \|\alpha\| \) denote the usual maximum norm of a \( n \times n \) matrix acting on \( \mathbb{C}^n \). Then we get
	\begin{equation*}
	\begin{split}
		&\begin{bmatrix} 
		\|\alpha\| y \otimes 1_n & y \otimes \alpha \\ 
		y \otimes \alpha^* & \|\alpha\|y \otimes 1_n 
		\end{bmatrix} =  \\
		&\begin{bmatrix} 
		y^{\frac{1}{2}} \otimes 1_n & 0  \\ 0 & y^{\frac{1}{2}} \otimes 1_n 
		\end{bmatrix}
		\begin{bmatrix} 
		\|\alpha\| 1_n & \alpha  \\ \alpha^* & \|\alpha\| 1_n 
		\end{bmatrix}
		\begin{bmatrix} 
		y^{\frac{1}{2}} \otimes 1_n & 0  \\ 0 & y^{\frac{1}{2}} \otimes 1_n 
		\end{bmatrix} \ge 0. \qedhere
	\end{split}
	\end{equation*}
\end{proof}
\begin{lem}\label{xFiniteLinearCombination}
	Let \( \mathcal{M} \) be a von Neumann algebra, \( 1 \le p \le \infty \), \( n  \in \mathbb{N}\), and \( x \in (L_p(\mathcal{M}) \otimes M_n)_+ \). Then \( x \) is a finite linear combination of matrices of the form \( [x_i^*x_j] \) where \( x_i \in L_{2p}(\mathcal{M}) \) for \( i \in \{1, \dots, n\} \).
\end{lem}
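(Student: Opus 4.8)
The plan is to factor \(x\) through its positive square root and read off the decomposition entrywise. First I would record the right ambient picture: regarding \(x\) as a positive \(\tau \otimes Tr\)-measurable operator affiliated with \((\mathcal{M} \rtimes_{\sigma^{\varphi}} \mathbb{R}) \otimes M_n\), the space \(L_p(\mathcal{M}) \otimes M_n\) is exactly the set of those operators \(a\) satisfying \((\theta_s \otimes \mathrm{id}_{M_n})(a) = e^{-\frac{s}{p}} a\) for all \(s \in \mathbb{R}\), since this is just the entrywise reformulation of the scaling condition defining \(L_p(\mathcal{M})\). Now set \(b = x^{\frac{1}{2}}\), the positive square root obtained by functional calculus. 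Because the extended dual action is a normal \(*\)-automorphism, it commutes with the continuous functional calculus, so \((\theta_s \otimes \mathrm{id}_{M_n})(b) = \bigl((\theta_s \otimes \mathrm{id}_{M_n})(x)\bigr)^{\frac{1}{2}} = \bigl(e^{-\frac{s}{p}} x\bigr)^{\frac{1}{2}} = e^{-\frac{s}{2p}} b\). Hence \(b \in L_{2p}(\mathcal{M}) \otimes M_n\).

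Writing \(b = [b_{ij}]\), the homogeneity of \(b\) passes to each entry: matching the matrices in \((\theta_s \otimes \mathrm{id}_{M_n})([b_{ij}]) = e^{-\frac{s}{2p}}[b_{ij}]\) gives \(\theta_s(b_{ij}) = e^{-\frac{s}{2p}} b_{ij}\), so every \(b_{ij} \in L_{2p}(\mathcal{M})\). Since \(x = b^* b\), its entries are \(x_{ij} = \sum_{k=1}^{n} (b_{ki})^* b_{kj}\). For each fixed \(k\) the matrix \(\bigl[(b_{ki})^* b_{kj}\bigr]_{i,j}\) is exactly of the form \([y_i^* y_j]\) with \(y_i = b_{ki} \in L_{2p}(\mathcal{M})\). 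Summing over \(k\) gives
\begin{equation*}
	x = \sum_{k=1}^{n} \bigl[\, (b_{ki})^* b_{kj} \,\bigr]_{i,j=1}^{n},
\end{equation*}
which exhibits \(x\) as a finite sum, hence a finite linear combination, of matrices of the required form.

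The only step needing genuine care is \(b = x^{\frac{1}{2}} \in L_{2p}(\mathcal{M}) \otimes M_n\), that is, that taking the square root does not leave the matrix \(L_{2p}\)-space. Its substance is twofold: that \(L_p(\mathcal{M}) \otimes M_n\) coincides with \(L_p\) of the amplified algebra under \(\theta_s \otimes \mathrm{id}_{M_n}\), so that functional calculus is available and the scaling degree is meaningful, and that \(\theta\) commutes with the square-root functional calculus on measurable operators. Both facts are standard in the Haagerup--Terp construction, but they are precisely what converts the purely algebraic identity \(x = b^* b\) into a statement in the correct spaces; everything after that is the routine entrywise bookkeeping indicated above. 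I would note also that self-adjointness of \(b\) is not actually needed for the argument: any factorization \(x = b^* b\) with \(b \in L_{2p}(\mathcal{M}) \otimes M_n\) suffices, and the positive square root merely supplies one such \(b\).
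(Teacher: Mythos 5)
Your proof is correct, but it takes a genuinely different route from the paper's. The paper never extracts a square root inside the matrix \(L_{2p}\)-space: it first uses Lemma \ref{fGreaterX} to find \(f \in L_p(\mathcal{M})_+\) with \(\left[\begin{smallmatrix} f \otimes 1_n & x \\ x^* & f \otimes 1_n \end{smallmatrix}\right] \ge 0\), then applies Theorem \ref{xAsBoundedOp} (in the case \(f=g\), \(x\) positive) to factor \(x = (f^{\frac{1}{2}} \otimes 1_n)\,y\,(f^{\frac{1}{2}} \otimes 1_n)\) with \(y \in (\mathcal{M} \otimes M_n)_+\) \emph{bounded}, invokes Takesaki's lemma that a positive matrix over a von Neumann algebra is a finite linear combination of matrices \([y_i^* y_j]\) with \(y_i \in \mathcal{M}\), and finally sets \(x_i = y_i f^{\frac{1}{2}}\). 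That argument stays entirely within tools the paper has already established and needs no functional calculus on unbounded measurable operators and no identification of \(L_p(\mathcal{M}) \otimes M_n\) with Haagerup's \(L_p\) of the amplified algebra. Your argument buys a shorter and more explicit conclusion --- an honest sum of \(n\) matrices, \(x = \sum_{k}[(b_{ki})^* b_{kj}]\), with no "linear combination" needed --- but its cost is precisely the two facts you flag. Both do hold: since \(Tr\) is a trace, \(\sigma^{\varphi \otimes Tr}_t = \sigma^{\varphi}_t \otimes \mathrm{id}\), so \((\mathcal{M} \otimes M_n) \rtimes_{\sigma^{\varphi \otimes Tr}} \mathbb{R} \cong (\mathcal{M} \rtimes_{\sigma^{\varphi}} \mathbb{R}) \otimes M_n\) with trace \(\tau \otimes Tr\) and dual action \(\theta_s \otimes \mathrm{id}\), which yields the identification of the matrix space with \(L_p(\mathcal{M} \otimes M_n)\) (entrywise scaling follows by compressing with the fixed projections \(1 \otimes \varepsilon_{ii}\)); and the commutation of \(\theta_s\) with the square root is immediate in this setup because the dual action is spatially implemented, \(\theta_s(\cdot) = W(s)(\cdot)W(s)^*\), and conjugation by a unitary commutes with the functional calculus of positive self-adjoint operators, so \((\theta_s \otimes \mathrm{id})(x^{\frac{1}{2}}) = \bigl(e^{-\frac{s}{p}}x\bigr)^{\frac{1}{2}} = e^{-\frac{s}{2p}}x^{\frac{1}{2}}\). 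Note, though, that the paper only records the inclusion "elements of \(L_p(\mathcal{M}) \otimes M_n\) are \(\tau \otimes Tr\)-measurable," not the converse characterization your argument relies on, so in this paper's framework you should either cite the standard identification (it appears, e.g., in the literature around \cite{JuRu} and \cite{Te}) or prove the compression step; once that is in place, your proof is complete, handles \(p = \infty\) uniformly, and, as you observe, uses positivity of \(x\) only to guarantee the existence of \emph{some} factorization \(x = b^*b\) with \(b \in L_{2p}(\mathcal{M}) \otimes M_n\).
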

\begin{proof}
	By Lemma \ref{fGreaterX}, there exists \( f \in L_p(\mathcal{M})_+ \) such that
	\begin{equation*}
		\begin{bmatrix} 
		f \otimes 1_n & x \\ x^* & f \otimes 1_n 
		\end{bmatrix} \ge 0.
	\end{equation*}
	By Lemma \ref{xAsBoundedOp}, there exists \( y \in (\mathcal{M} \otimes M_n)_+ \) such that
	\begin{equation*}
		x = (f^{\frac{1}{2}} \otimes 1_n)y(f^{\frac{1}{2}} \otimes 1_n) \textrm{ and } (\textrm{supp}(f) \otimes 1_n) y (\textrm{supp}(f) \otimes 1_n) = y.
	\end{equation*}
	By \cite{Ta}, Lemma 3.1, \(y\) is a finite linear combination of matrices of the form \( [y_i^*y_j] \) where \( y_i \in \mathcal{M} \) for \( i \in \{1, \dots, n\} \). Thus, putting \( x_i = y_i f^{\frac{1}{2}} \), we get the desired result.
\end{proof}
\begin{prop}\label{operatorIsPositive}
	Let \( \mathcal{M} \) be a von Neumann algebra, \( 1 \le p, p' \le \infty \), \( \frac{1}{p} + \frac{1}{p'} = 1\), \( n  \in \mathbb{N}\), and \( x \in L_p(\mathcal{M}) \otimes M_n \). Then the following are equivalent:
	
	\begin{itemize} \item[(i)] The operator \( x \) is positive.
	
	\item[(ii)] For all \( y \in (L_{p'}(\mathcal{M}) \otimes M_n)_+\), we have \( <x,y> \ge 0. \)
	\end{itemize}
\end{prop}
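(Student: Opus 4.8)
The plan is to identify the bilinear pairing with a trace. Writing $x=[x_{ij}]$ and $y=[y_{ij}]$, the definition $\langle x,y\rangle=\sum_{i,j}\langle x_{ij},y_{ji}\rangle$ is exactly $(tr\otimes Tr)(xy)$, where $tr\otimes Tr$ is the canonical positive trace on $L_1(\mathcal{M})\otimes M_n=L_1(\mathcal{M}\,\overline{\otimes}\,M_n)$, since the diagonal entries of the product $xy$ sum to $\sum_{i,j}x_{ij}y_{ji}$. For (i)$\Rightarrow$(ii) I would use functional calculus for positive $\tau\otimes Tr$-measurable operators: if $x\ge 0$ then $x^{1/2}\in L_{2p}(\mathcal{M})\otimes M_n$ and if $y\ge 0$ then $y^{1/2}\in L_{2p'}(\mathcal{M})\otimes M_n$, because $\theta_s(x^{1/2})=e^{-s/2p}x^{1/2}$ and likewise for $y$. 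By H\"older the operator $c:=x^{1/2}y^{1/2}$ lies in $L_2(\mathcal{M})\otimes M_n$, so $c^*c\in L_1(\mathcal{M})\otimes M_n$, and cyclicity of the trace (the relation $tr(ab)=tr(ba)$ for conjugate exponents, applied inside the matrix algebra) gives $\langle x,y\rangle=(tr\otimes Tr)(xy)=(tr\otimes Tr)(y^{1/2}xy^{1/2})=(tr\otimes Tr)(c^*c)\ge 0$, since $tr\otimes Tr$ is positive on $(L_1(\mathcal{M})\otimes M_n)_+$.

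For (ii)$\Rightarrow$(i) I would first extract self-adjointness. A direct computation from the definition of the pairing shows $\langle x^*,y\rangle=\overline{\langle x,y\rangle}$ for every self-adjoint $y$; since (ii) makes $\langle x,y\rangle$ a nonnegative real for all $y\in(L_{p'}(\mathcal{M})\otimes M_n)_+$, we obtain $\langle x-x^*,y\rangle=0$ on all positive $y$. As positive elements span $L_{p'}(\mathcal{M})\otimes M_n$ and the pairing realizes the isometric duality $L_{p'}=(L_p)^*$ (hence is nondegenerate), this forces $x=x^*$. Then $x$ admits a Jordan decomposition $x=x_+-x_-$ with $x_\pm\ge 0$, orthogonal supports, and $x_\pm\in L_p(\mathcal{M})\otimes M_n$ (each dominated by $|x|$ through functional calculus).

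The crux is to show $x_-=0$ by exhibiting a positive test element whenever $x_-\neq 0$. For $1\le p<\infty$ I would take $y:=x_-^{\,p-1}$, which for $p=1$ is the support projection $\textrm{supp}(x_-)$; then $y\in(L_{p'}(\mathcal{M})\otimes M_n)_+$, for $p>1$ because $(tr\otimes Tr)(y^{p'})=(tr\otimes Tr)(x_-^{p})=\|x_-\|_p^p<\infty$ and for $p=1$ because a support projection is bounded. Since $x_+$ and $x_-$ have orthogonal supports we have $x_+y=0$, whence $\langle x,y\rangle=-(tr\otimes Tr)(x_-^{\,p})=-\|x_-\|_p^p<0$, contradicting (ii); so $x_-=0$ and $x\ge 0$. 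The genuinely separate case is $p=\infty$, where $x_-$ is bounded but need not be $\tau\otimes Tr$-integrable, so $x_-^{\,p-1}$ is unavailable; this is the step I expect to be the main obstacle. There I would instead use semifiniteness of $\tau\otimes Tr$: if $x_-\ne 0$, pick $\varepsilon>0$ with the spectral projection $e=\mathbf 1_{(-\infty,-\varepsilon]}(x)\neq 0$, and then a subprojection $e'\le e$ with $0<(\tau\otimes Tr)(e')<\infty$, so that $y:=e'\in(L_1(\mathcal{M})\otimes M_n)_+$; from $e'xe'\le-\varepsilon e'$ one gets $\langle x,e'\rangle=(tr\otimes Tr)(e'xe')\le-\varepsilon\,(tr\otimes Tr)(e')<0$, again contradicting (ii). I would finally double-check the exponent bookkeeping in the H\"older and cyclicity steps at the endpoints $p\in\{1,\infty\}$, and verify that $\textrm{supp}(x_-^{\,p-1})=\textrm{supp}(x_-)$, which is precisely what makes the cross term $x_+y$ vanish.
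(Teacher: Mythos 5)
Your (i)\(\Rightarrow\)(ii) and your (ii)\(\Rightarrow\)(i) for \(1\le p<\infty\) are correct and take a genuinely different route from the paper. For (i)\(\Rightarrow\)(ii) the paper first writes a positive matrix as a combination of matrices \([x_i^*x_j]\) (Lemma \ref{xFiniteLinearCombination}, which itself rests on Theorem \ref{xAsBoundedOp}) and then exhibits the pairing as \(tr(c^*c)\); you reach the same conclusion directly from \(x^{1/2}\in L_{2p}(\mathcal{M})\otimes M_n\), H\"older, and traciality of \(tr\otimes Tr\). That is shorter, at the cost of invoking the identification \(L_p(\mathcal{M})\otimes M_n=L_p(\mathcal{M}\,\bar{\otimes}\,M_n)\) with distinguished positive tracial functional \(tr\otimes Tr\) — a fact the paper uses implicitly throughout, so this is a fair trade. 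For (ii)\(\Rightarrow\)(i) with \(p<\infty\), the paper factors \(x=(f\otimes 1_n)b(f\otimes 1_n)\) with \(f\in L_{2p}(\mathcal{M})_+\) and \(b\in\mathcal{M}\otimes M_n\) self-adjoint, and proves \(b\ge 0\) via Schmitt's density lemma (\(f^p\mathcal{M}\) dense in \(\textrm{supp}(f)L_2(\mathcal{M})\)) and a faithful representation; your Jordan decomposition with the explicit dual witness \(y=x_-^{p-1}\) (and \(y=\textrm{supp}(x_-)\in\mathcal{M}\otimes M_n\) for \(p=1\)) is more elementary, and you even supply the self-adjointness argument that the paper's proof merely asserts. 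The exponent bookkeeping checks out: \(\theta_s(x_-^{p-1})=e^{-s(p-1)/p}x_-^{p-1}=e^{-s/p'}x_-^{p-1}\), \((p-1)p'=p\), and \(\textrm{supp}(x_-^{p-1})=\textrm{supp}(x_-)\) makes the cross term vanish.

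The genuine gap is exactly where you predicted it: \(p=\infty\). Your repair fails in the Haagerup--Terp framework the paper works in. A subprojection \(e'\le e\) with \(0<(\tau\otimes Tr)(e')<\infty\) is an element of the \emph{tracial} \(L_1\) of the crossed product \((\mathcal{M}\rtimes_{\sigma^{\varphi}}\mathbb{R})\otimes M_n\), not of the Haagerup space \(L_1(\mathcal{M})\otimes M_n\): membership in the latter means \(\theta_s(a)=e^{-s}a\) for all \(s\), and no nonzero projection can satisfy this, since the left side is again a projection while the right side is not for \(s\ne 0\). Finite \(\tau\otimes Tr\)-trace is the wrong membership criterion — it characterizes Nelson's tracial \(L_1\), which sits inside the measurable operators differently even when \(\mathcal{M}\) is semifinite — and consequently \(\langle x,e'\rangle\) would have to be read as \((\tau\otimes Tr)(xe')\), which is not the paper's pairing \(tr(\cdot)\) at all. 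A correct substitute in your spirit: choose a nonzero normal positive functional \(\phi\) on \(\mathcal{M}\,\bar{\otimes}\,M_n\) with support dominated by \(e=\mathbf{1}_{(-\infty,-\varepsilon]}(x)\) and take its density \(y=h_\phi\in(L_1(\mathcal{M})\otimes M_n)_+\); then \(y=eye\), so \(\langle x,y\rangle=tr(exe\,y)\le-\varepsilon\,tr(y)<0\) (using \(exe+\varepsilon e\ge 0\) and positivity of the pairing of positives), contradicting (ii). Alternatively, follow the paper's route for \(p=\infty\): represent \(\mathcal{M}\otimes M_n\) on \(L_2(\mathcal{M})^n\) and test against the positive \(L_1\) matrices \([\xi_i\xi_j^*]\), \(\xi_i\in L_2(\mathcal{M})\) — these, not finite-trace projections, are the positive elements of \(L_1\) actually available.
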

\begin{proof}
	We show first that (i) implies (ii): By Lemma \ref{xFiniteLinearCombination}, is suffices to show: If \( x = [x_i^*x_j] \) and \( y = [y_i^*y_j] \) where \( x_i \in L_{2p}(\mathcal{M}) \) and \(y_i \in L_{2p'}(\mathcal{M})\) for \( i \in \{1, \dots, n\} \), then \( <x,y>\ge 0. \) Now we have
	\begin{align*}
		<x,y> &= \sum_{i,j=1}^{n}tr\left(x_i^* x_j y_j^* y_i\right) = \\
		&= tr\left(\left(\sum_{i=1}^{n}x_i y_i^*\right)^* \left(\sum_{j=1}^{n}x_j y_j^*\right)\right) \ge 0.
	\end{align*}
	For the implication from (ii) to (i), we consider the case \( p = \infty \) first. Then
	\begin{equation*}
		\pi: \mathcal{M} \to
		\mathcal{B}(L_2(\mathcal{M})),~ \pi(a)\xi = a\xi, ~  a \in \mathcal{M},\xi \in L_2(\mathcal{M})
	\end{equation*}
	is a faithful \( * \)-representation and
	\begin{equation*}
		\pi \otimes Id_{M_n}: \mathcal{M} \otimes M_n \to \mathcal{B}(L_2(\mathcal{M})^n)
	\end{equation*}
	is a faithful \( * \)-representation. If \( x = [x_{ij}] \) and \( \xi_1, \dots, \xi_n \in L_2(\mathcal{M}) \), then we get
	\begin{equation*}
		\left( \left[ \pi (x_{ij})\right]
		\begin{bmatrix} \xi_1 \\ \vdots \\ \xi_n \end{bmatrix} 
		\Bigg \vert
		\begin{bmatrix} \xi_1 \\ \vdots  \\ \xi_n \end{bmatrix}
		\right) = \sum_{i,j=1}^{n}tr\left(x_{ij} \xi_j \xi_i^* \right)  = <x, \left[ \xi_i \xi_j^* \right]\ge 0.
	\end{equation*}
	Hence \( \left( \pi \otimes Id_{M_n} \right) (x) \) is positive and therefore \( x \) is positive.
	
	Now let \( 1 \le p < \infty \). If \( x \) fulfils (ii), \(x\) must be self-adjoint. By Lemma \ref{fGreaterX}, and Lemma \ref{xAsBoundedOp}, there exists \( f \in L_{2p}(\mathcal{M})_+ \) and a self-adjoint \( b = [b_{ij}] \in \mathcal{M} \otimes M_n \) such that
	\begin{equation*}
		x = (f \otimes 1_n)b(f \otimes 1_n) \textrm{ and } \textrm{supp}(f) \cdot b \cdot \textrm{supp}(f) = b.
	\end{equation*}
	If \( p = 1 \), we put \(g = 1 \) (the unit in \( \mathcal{M} \)). If \( p > 1 \), we put \( g = f^{p-1} \). Then \( fg \in L_2(\mathcal{M}) \) , and for \( y_1, \dots, y_n \in \mathcal{M} \) we get
	\begin{equation}\label{xPositive}
		0 \le <x, \left[g y_i^* y_j g\right] > = \sum_{i,j =1}^{n}<fb_{ij}f,g y_j^* y_i g> = \sum_{i,j =1}^{n}tr\left(b_{ij} f^p y_j^* y_i f^p \right)
	\end{equation}
	By \cite{Schm}, Lemma 1.1.5, \( f^p\mathcal{M} \) is dense in \( \|\cdot\|_2 \) norm in \(\textrm{supp}(f)L_2(\mathcal{M}) \). The representation 
	\begin{equation}\label{representation}
	\begin{split}
		&\pi: \textrm{supp}(f) \mathcal{M} \textrm{supp}(f) \to \mathcal{B}(\textrm{supp}(f) L_2(\mathcal{M}))\\
		&\pi(a)\xi = a\xi,~ a \in \textrm{supp}(f) \mathcal{M} \textrm{supp}(f), \xi \in \textrm{supp}(f)L_2(\mathcal{M})
	\end{split}
	\end{equation}
	is faithful. Equation \eqref{xPositive} states that \( (\pi \otimes Id_{M_n})(b) \) is positive for a dense set of \( (\textrm{supp}(f) \otimes 1_n)L_2(\mathcal{M})^n \), and hence for all elements of \( (\textrm{supp}(f) \otimes 1_n)L_2(\mathcal{M})^n \). Since the representation in \eqref{representation} is faithful, \( b \) is positive, and thus \( x  \) is positive.
\end{proof}
\section{MATRIX NORMS ON NON-COMMUTATIVE \(\boldsymbol{L_p}\)-SPACES}
In this section \( \mathcal{M} \) always denotes a von Neumann algebra without any further restrictions. For each \( n \in \mathbb{N} \), we will define a norm on \( L_p(\mathcal{M}) \otimes M_n \) and derive some properties of this norm.

\begin{defn}
	Let \( 1 \le p \le \infty \), \( n \in \mathbb{N} \), and \( x \in L_p(\mathcal{M}) \otimes M_n \). Then we define
	\begin{equation}\label{defNorm}
		\| x \|_{p,n} =  \left\{  \frac{1}{2} \left(\|f\|_p + \|g\|_p \right)
		\Big \vert f, g \in L_p(\mathcal{M})_+, 
		\begin{bmatrix} f \otimes 1_n & x \\ x^* & g \otimes 1_n 
		\end{bmatrix} \ge 0
		\right\}.
	\end{equation}
\end{defn}

\begin{rem}
	By Lemma \ref{fGreaterX}, the set on the right side  of equation \ref{defNorm} is not empty, and therefore, the infimum is well defined.
	
	If \( p = \infty \), the norm \( \|x\|_{\infty, n} \) is identical with the usual operator norm of \( x \) considered as a bounded operator on a Hilbert space.
	
	The combination of this norm definition with Theorem \ref{xAsBoundedOp} shows that this norm is quite similar to the norm used in \cite{Pi}, equation (1.5).
\end{rem}

For \( \alpha \in M_n, \|\alpha\| \) denotes the usual operator norm of \( n \times n \)-matrices being operators on \( \mathbb{C}^n \). If \( x  \in L_p(\mathcal{M}) \otimes M_n, x = \sum_{j=1}^{k} x_j \otimes \beta_j \), then \( \alpha x = \sum_{j=1}^{k} x_j \otimes \alpha\beta_j \) and \( x\alpha = \sum_{j=1}^{k} x_j \otimes \beta_j\alpha \). For \(n \in \mathbb{N}\) let
\begin{equation}\label{definitionEpsilonij}
	\varepsilon_{ij} \textrm{ be the } n \times n \textrm{ matrix with 1 at position } (i,j) \textrm{ and 0 else.}
 \end{equation}

\begin{thm}
	Let \( 1 \le p \le \infty \textrm{ and } n \in \mathbb{N} \). Then the following holds:
	
	\begin{itemize} \item[(i)] 
	If \( x, y \in L_p(\mathcal{M}) \otimes M_n\), then 
	\( \|x+y\|_{p,n} \le \|x\|_{p,n} + \|y\|_{p,n} \).
	
	\item[(ii)]
	If \( x \in L_p(\mathcal{M}) \otimes M_n\), then \( \|x^*\|_{p,n}  = \|x\|_{p,n}\).
	
	\item[(iii)]
	If \( x \in L_p(\mathcal{M}) \otimes M_n\) and \(\alpha \in M_n \), then
	\begin{equation*}
	\|\alpha x\|_{p,n} \le \|\alpha\| \|x\|_{p.n} \textrm{ and } \|x \alpha \|_{p,n} \le \|\alpha\| \|x\|_{p,n}.
	\end{equation*}
	
	\item[(iv)]
	If \( x \in L_p(\mathcal{M}) \otimes M_n\) and \( \lambda \in \mathbb{C} \), then \( \|\lambda x \|_{p,n} = |\lambda| \|x\|_{p,n} \).
	
	\item[(v)]
	If \( x = [x_{ij}] \in L_p(\mathcal{M}) \otimes M_n\), then
	\begin{equation*}
	\max \left\{ \|x_{ij}\|_{p} \ \middle|  \ 1 \le i, j \le n \right\} \le \|x\|_{p,n} \le \sum_{i,j=1}^{n} \|x_{ij}\|_p.
	\end{equation*}
	
	\item[(vi)]
	If \( x \in L_p(\mathcal{M}) \otimes M_n\) with \( \|x\|_{p,n} = 0 \), then \( x = 0 \).
	\end{itemize}
\end{thm}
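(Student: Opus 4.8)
The plan is to exploit the fact that every assertion concerns feasible pairs $(f,g) \in L_p(\mathcal{M})_+ \times L_p(\mathcal{M})_+$ making the block operator $\left[\begin{smallmatrix} f \otimes 1_n & x \\ x^* & g \otimes 1_n \end{smallmatrix}\right]$ positive, so throughout the strategy is to produce, from feasible pairs for $x$ (and $y$), a feasible pair for the transformed element whose averaged $p$-norm we can control, and then pass to the infimum. For (i), given feasible pairs $(f_1,g_1)$ for $x$ and $(f_2,g_2)$ for $y$, adding the two positive block matrices shows $(f_1+f_2,\,g_1+g_2)$ is feasible for $x+y$; since $\|f_1 + f_2\|_p \le \|f_1\|_p + \|f_2\|_p$ and likewise for $g$, taking the infimum over both pairs yields the triangle inequality. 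For (ii), conjugating the block matrix by the self-adjoint unitary $\left[\begin{smallmatrix} 0 & 1_n \\ 1_n & 0\end{smallmatrix}\right]$ turns a feasible pair $(f,g)$ for $x$ into the feasible pair $(g,f)$ for $x^*$ with the same averaged norm, so $\|x^*\|_{p,n} \le \|x\|_{p,n}$, and equality follows by symmetry.

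For (iii) the key is a rescaled conjugation. Starting from a feasible pair $(f,g)$ for $x$, I conjugate the block matrix by $\left[\begin{smallmatrix} \|\alpha\|^{-1/2}\alpha & 0 \\ 0 & \|\alpha\|^{1/2} 1_n\end{smallmatrix}\right]$: the off-diagonal corner becomes exactly $\alpha x$, the bottom-right corner becomes $\|\alpha\|\, g \otimes 1_n$, and the top-left corner becomes $\|\alpha\|^{-1} f \otimes \alpha\alpha^*$, which is dominated by $\|\alpha\|\, f \otimes 1_n$ because $\alpha\alpha^* \le \|\alpha\|^2 1_n$ and $f \ge 0$. Adding the positive difference back into the top-left corner preserves positivity and shows that $(\|\alpha\| f,\, \|\alpha\| g)$ is feasible for $\alpha x$, giving $\|\alpha x\|_{p,n} \le \|\alpha\|\,\|x\|_{p,n}$ after taking the infimum; the bound for $x\alpha$ then follows from (ii) applied to $(x\alpha)^* = \alpha^* x^*$. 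Part (iv) is the special case $\alpha = \lambda 1_n$ of (iii), combined with the reverse estimate obtained by applying (iii) to $\lambda^{-1}(\lambda x)$.

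The substance is in (v). For the lower bound, fix indices $i,j$ and compress the positive block matrix associated with any feasible pair $(f,g)$ to the two coordinates carrying $f$ in the $i$-th top slot and $g$ in the $j$-th bottom slot; this compression is the positive $2 \times 2$ operator matrix $\left[\begin{smallmatrix} f & x_{ij} \\ x_{ij}^* & g\end{smallmatrix}\right]$. Applying Theorem \ref{xAsBoundedOp} with $n=1$ yields $y \in \mathcal{M}$ with $\|y\|_\infty \le 1$ and $x_{ij} = f^{1/2} y g^{1/2}$, whence the generalized Hölder inequality gives $\|x_{ij}\|_p \le \|f^{1/2}\|_{2p}\,\|g^{1/2}\|_{2p} = \|f\|_p^{1/2}\|g\|_p^{1/2} \le \tfrac12(\|f\|_p + \|g\|_p)$; the infimum over feasible pairs then gives $\|x_{ij}\|_p \le \|x\|_{p,n}$ for every entry. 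For the upper bound, I write $x = \sum_{i,j} x_{ij}\otimes\varepsilon_{ij}$ and use (i) to reduce to a single summand $a \otimes \varepsilon_{ij}$ with $a \in L_p(\mathcal{M})$; using the polar decomposition $a = u|a|$, the pair $(|a^*|,\,|a|)$ is feasible for $a \otimes \varepsilon_{ij}$, since the only coupling is between the $i$-th top and $j$-th bottom coordinate, where positivity of $\left[\begin{smallmatrix} |a^*| & a \\ a^* & |a|\end{smallmatrix}\right]$ is seen by factoring out $\left[\begin{smallmatrix} u & 0 \\ 0 & 1\end{smallmatrix}\right]$ and recognizing a rank-one Gram matrix. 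As $\||a^*|\|_p = \||a|\|_p = \|a\|_p$, this yields $\|a\otimes\varepsilon_{ij}\|_{p,n} \le \|a\|_p$, and summing gives $\|x\|_{p,n} \le \sum_{i,j}\|x_{ij}\|_p$. Finally (vi) is immediate: $\|x\|_{p,n} = 0$ forces every $\|x_{ij}\|_p = 0$ by the lower bound in (v), hence $x = 0$.

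I expect the main obstacle to be the lower bound in (v): one must justify that compressing the (generally unbounded, $\tau \otimes Tr$-measurable) block operator to a pair of coordinates genuinely produces a positive $2\times 2$ operator matrix to which Theorem \ref{xAsBoundedOp} applies, and then carefully combine the factorization with Hölder and the identity $\|f^{1/2}\|_{2p} = \|f\|_p^{1/2}$ (valid also at $p=\infty$). The remaining care is bookkeeping in (iii), ensuring the rescaled conjugation places $\alpha x$ exactly in the corner while keeping the two diagonal norms balanced so that the factor $\|\alpha\|$ comes out cleanly.
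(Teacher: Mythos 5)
Your proposal is correct and takes essentially the same approach as the paper: feasible pairs are added for (i), the block matrix is conjugated by scalar matrices for (ii)--(iv), the factorization of Theorem \ref{xAsBoundedOp} combined with the generalized H\"older inequality gives the lower bound in (v), and the polar-decomposition positivity of \(\left[\begin{smallmatrix}|a^*| & a\\ a^* & |a|\end{smallmatrix}\right]\) gives the upper bound. Your only deviations are cosmetic --- you compress to a \(2\times 2\) corner and invoke the \(n=1\) case of Theorem \ref{xAsBoundedOp} where the paper reads \(x_{ij}=f^{\frac{1}{2}}y_{ij}g^{\frac{1}{2}}\) directly off the full \(n\times n\) factorization, and you check feasibility of \((|a^*|,|a|)\) for \(a\otimes\varepsilon_{ij}\) directly where the paper first shows \(\|a\otimes 1_n\|_{p,n}\le\|a\|_p\) and then applies (iii) with \(\alpha=\varepsilon_{ij}\) --- and the compression step you flag as the main obstacle is justified by exactly the kind of multiplication by scalar rectangular matrices that the paper itself uses repeatedly.
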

\begin{proof}
	Let \(x,y \in L_p(\mathcal{M}) \otimes M_n\), and \( \varepsilon > 0 \). Then there exist \( f_1, f_2, g_1, g_2 \in L_p(\mathcal{M})_+ \) such that
	\begin{equation*}
		\begin{bmatrix}	f_1 \otimes 1_n & x \\ x^* & g_1 \otimes 1_n 
		\end{bmatrix} \ge 0,\quad	 
		\frac{1}{2}\left(\|f_1\|_p + \|g_1\|_p\right) \le \|x\|_{p,n} + \varepsilon,
	\end{equation*} and
	\begin{equation*}	
		\begin{bmatrix} f_2 \otimes 1_n & y \\ y^* & g_2 \otimes 1_n 
		\end{bmatrix} \ge 0,\quad
		\frac{1}{2}\left(\|f_2\|_p + \|g_2\|_p\right) \le \|y\|_{p,n} + \varepsilon.
	\end{equation*}
	Then we get
	\begin{equation*}
		\begin{bmatrix} 
		\left( f_1 + f_2\right) \otimes 1_n & x+y \\ 
		\left(x+y\right)^* & \left(g_1 + g_2\right) \otimes 1_n 
		\end{bmatrix} \ge 0
	\end{equation*} and
	\begin{equation*}
		\|x+y\|_{p,n} \le \frac{1}{2}\left(\|f_1 + f_2\|_p + \|g_1 + g_2\|_p\right) \le \|x\|_{p,n} + \|y\|_{p,n} + 2\varepsilon.
	\end{equation*}
	Since \(\varepsilon\) is arbitrary, (i) is proved.
	
	To prove (ii), let \(x \in L_p(\mathcal{M}) \otimes M_n\) and \( \varepsilon > 0 \). Then there exist \( f, g \in L_p(\mathcal{M})_+ \) such that
	\begin{equation*}
		\begin{bmatrix}	f \otimes 1_n & x \\ x^* & g \otimes 1_n 
		\end{bmatrix} \ge 0, \quad
		\frac{1}{2}\left(\|f\|_p + \|g\|_p\right) \le \|x\|_{p,n} + \varepsilon.
	\end{equation*}
	Then we get
	\begin{equation*}
		0 \le \begin{bmatrix} 0 & 1 \\ 1 & 0 \end{bmatrix} 
		\begin{bmatrix}	f \otimes 1_n & x \\ x^* & g \otimes 1_n 
		\end{bmatrix} 
		\begin{bmatrix}	0 & 1 \\ 1 & 0 \end{bmatrix} =
		\begin{bmatrix}	g \otimes 1_n & x^* \\ x & f \otimes 1_n 
		\end{bmatrix}.
	\end{equation*}
	Hence, we conclude \( \|x^*\|_{p,n} \le \frac{1}{2}\left(\|f\|_p + \|g\|_p\right) \le \|x\|_{p,n} + \varepsilon \). Since \( \varepsilon \) is arbitrary, we get \( \|x^*\|_{p,n} \le \|x\|_{p,n} \). Since \( \|x\|_{p,n} = \|x^{**}\|_{p,n} \le \|x^*\|_{p,n}\), (ii) is proved.
	
	Next, we prove (iii). Let  and \( \alpha \in M_n\).
	If \( \alpha = 0 \) then \( \alpha x = 0 \) and the inequality is true. So let \( \alpha \ne 0 \). For \( \varepsilon > 0 \), there exist \( f,g \in L_p(\mathcal{M})_+ \) such that
	\begin{equation*}
		\begin{bmatrix} f \otimes 1_n & x \\ x^* & g \otimes 1_n 
		\end{bmatrix} \ge 0 \textrm{ and }
		\frac{1}{2}(\|f\|_p+\|g\|_p) \le \|x\|_{p,n} + \varepsilon. 
	\end{equation*}
	Then we have for \( \lambda > 0 \)
	\begin{align*}
		0 &\le \begin{bmatrix} \frac{1}{\lambda}\alpha & 0 \\ 
		0 & \lambda 1_n \end{bmatrix}
		\begin{bmatrix} f \otimes 1_n & x \\ x^* & g \otimes 1_n 
		\end{bmatrix}
		\begin{bmatrix} \frac{1}{\lambda}\alpha^* & 0 \\ 
		0 & \lambda 1_n \end{bmatrix} \\
		&= \begin{bmatrix} 
		\frac{1}{\lambda^2}\alpha\alpha^*(f \otimes 1_n) & \alpha x \\ (\alpha x)^* & \lambda^2 g \otimes 1_n
		\end{bmatrix} \\
		&\le \begin{bmatrix} 
		\frac{1}{\lambda^2}\|\alpha\|^2 f \otimes 1_n & \alpha x \\ 
		(\alpha x)^* & \lambda^2 g \otimes 1_n
		\end{bmatrix}.
	\end{align*}
	We put \( \lambda^2 = \|\alpha\| \) and get
	\begin{equation*}
		\|\alpha x\|_{p,n} \le \frac{1}{2}\left(\|\alpha\| \|f\|_p + \|\alpha \|g\|_p \right) \le \|\alpha\|\left(\|x\|_{p,n} + \varepsilon\right).
	\end{equation*}
	Since \( \varepsilon \) was arbitrary, we get the desired result. A similar argument proves the second inequality (iii).
	
	To prove (iv), let \(x \in L_p(\mathcal{M}) \otimes M_n\) and \( \lambda \in \mathbb{C}\).  If \( \lambda = 0 \), we have
	\begin{equation*}
		\|\lambda x\|_{p,n} = 0 = |\lambda| \|x\|_{p,n}.
	\end{equation*}
	For \( \lambda \ne 0 \), we put \( \alpha = \lambda 1_n \), apply (iii), and get
	\begin{equation*}
		\|\lambda x\|_{p,n} = \|\lambda 1_n x\|_{p,n} \le |\lambda| \|x\|_{p,n} \textrm{ and}
	\end{equation*}
	\begin{equation*}
		|\lambda| \|x\|_{p,n} = |\lambda| \left\|\frac{1}{\lambda}\lambda x\right\|_{p,n} \le |\lambda| \frac{1}{|\lambda|} \|x\|_{p,n} \le \|\lambda x\|_{p,n}.
	\end{equation*}
	
	For (v), let \( x = [x_{ij}] \in L_p(\mathcal{M}) \otimes M_n\). For \(\varepsilon > 0 \), there exist \( f, g \in L_p(\mathcal{M})_+ \) such that
	\begin{equation*}
		\begin{bmatrix}	f \otimes 1_n & x \\ x^* & g \otimes 1_n 
		\end{bmatrix} \ge 0, \quad
		\frac{1}{2}\left(\|f\|_p + \|g\|_p\right) \le \|x\|_{p,n} + \varepsilon.
	\end{equation*}
	By Theorem \ref{xAsBoundedOp}, there exists \(y \in \mathcal{M} \otimes M_n\), such that \( \|y\|_{\infty} \le 1\) and \(x = (f^{\frac{1}{2}} \otimes 1_n)y(g^{\frac{1}{2}} \otimes 1_n)\). Then \(x_{ij} = f^{\frac{1}{2}}y_{ij}g^{\frac{1}{2}}\) and \(\|y_{ij}\|_{\infty}\le \|y\|_{\infty} \le 1 \) for all \(i,j \in \{1,\cdots,n\}\). Hence
	\begin{equation*}
	\begin{split}
		\|x_{ij}\|_p &= \|f^{\frac{1}{2}}y_{ij}g^{\frac{1}{2}}\|_p \le \|f^{\frac{1}{2}}\|_{2p}\|g^{\frac{1}{2}}\|_{2p} = \sqrt{\|f\|_p\|g\|_p}\\
		&\le \frac{1}{2}\left(\|f\|_p + \|g\|_p\right) \le \|x\|_{p,n} + \varepsilon, \quad i,j \in \{1,\cdots,n\}.
	\end{split}
	\end{equation*}
	For the second inequality of (v), let \( y \in L_p({\mathcal{M}}) \) with polar decomposition \( y = v|y|\). Then \(|y^*|v = v|y|, v^*|y^*|v = |y|\), and
	\begin{equation}\label{absoluteValueGreaterThanX}
		0 \le \begin{bmatrix} 1 & 0 \\ v^* & 0 \end{bmatrix}
		\begin{bmatrix}	|y^*| & 0 \\ 0 & |y^*| \end{bmatrix}
		\begin{bmatrix}	1 & v \\ 0 & 0 \end{bmatrix} =
		\begin{bmatrix}	|y^*| & y \\ y^* & |y| \end{bmatrix}.
	\end{equation} 
	Now it follows that 
	\begin{equation*}
		\begin{bmatrix} 
		|y^*| \otimes 1_n & y \otimes 1_n \\ 
		y^* \otimes 1_n & |y| \otimes 1_n
		\end{bmatrix} \ge 0
	\end{equation*}
	and therefore \( \|y \otimes 1_n \|_{p,n} \le \|y\|_{p}\). For \( i \in \{1, \dots, n\}  \), let \( \varepsilon_{ij} \) be as in \ref{definitionEpsilonij}. Then we get for \( x = [x_{ij}] \in L_p(\mathcal{M}) \otimes M_n\)
	\begin{equation*}
		\|x\|_{p,n} = \left\| \sum_{i,j=1}^{n}x_{ij} \otimes \varepsilon_{ij}\right\|_{p,n} \le \sum_{i,j=1}^{n}\|x_{ij} \otimes 1_n \|_{p,n} \|\varepsilon_{ij}\| \le \sum_{i,j=1}^{n}\|x_{ij}\|_p.
	\end{equation*}
	
	To prove (vi), let \( \|x\|_{p,n} = 0 \). From (v), it follows that \( x_{ij} = 0 \) for all \( i,j \in \{1, \dots, n\} \). Hence \( x = 0 \).
\end{proof}

The next theorem shows that the infimum in Definition \ref{defNorm} is actually a minimum.

\begin{thm}\label{infIsMinInNorm}
	Let \( 1 \le p \le \infty \), \( n \in \mathbb{N} \), and \( x \in L_p(\mathcal{M}) \otimes M_n \). Then there exist \( f, g \in L_p(\mathcal{M})_+ \) such that
	\begin{equation*}
		\begin{bmatrix} 
		f \otimes 1_n & x \\ x^* & g \otimes 1_n 
		\end{bmatrix} \ge 0 \textrm{ and }
		\|f\|_p = \|g\|_p = \|x\|_{p,n}.
	\end{equation*}
\end{thm}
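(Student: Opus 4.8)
The plan is to produce an optimal pair by a compactness argument applied to a minimizing sequence, after first normalizing the two weights to have equal norm. I begin by disposing of the trivial cases. If $x=0$, take $f=g=0$. If $p=\infty$, then $L_\infty(\mathcal{M})=\mathcal{M}$ is unital; for any admissible pair Theorem \ref{xAsBoundedOp} gives a factorization $x=(f^{\frac12}\otimes 1_n)y(g^{\frac12}\otimes 1_n)$ with $\|y\|_\infty\le 1$, whence $\|x\|_\infty\le\|f\|_\infty^{\frac12}\|g\|_\infty^{\frac12}\le\frac12(\|f\|_\infty+\|g\|_\infty)$, so that $f=g=\|x\|_{\infty,n}\cdot 1$ is admissible and optimal. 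From now on I assume $1\le p<\infty$ and $x\ne 0$.

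Next comes a scaling reduction. Conjugating the defining block by $\mathrm{diag}(\lambda^{\frac12}1,\lambda^{-\frac12}1)$ preserves positivity and replaces $(f,g)$ by $(\lambda f,\lambda^{-1}g)$. Positivity of the block together with $x\ne 0$ forces $f,g\ne 0$, so choosing $\lambda$ with $\lambda\|f\|_p=\lambda^{-1}\|g\|_p$ shows that every admissible pair can be rescaled to an admissible pair whose two norms are both equal to $\sqrt{\|f\|_p\|g\|_p}\le\frac12(\|f\|_p+\|g\|_p)$. Hence I may fix a minimizing sequence of admissible pairs $(f_k,g_k)$ with $\|f_k\|_p=\|g_k\|_p=c_k\to\|x\|_{p,n}=:c$, so that both sequences are bounded in $L_p(\mathcal{M})$.

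Now I extract a limit. For $1<p<\infty$ the space $L_p(\mathcal{M})$ is reflexive, so a subsequence converges weakly, $f_{k_j}\rightharpoonup f$ and $g_{k_j}\rightharpoonup g$; the positive cone is convex and norm-closed, hence weakly closed, giving $f,g\ge 0$, and weak lower semicontinuity of the norm gives $\|f\|_p,\|g\|_p\le c$. To see that the limiting block is positive I invoke Proposition \ref{operatorIsPositive}: it suffices that the block pairs nonnegatively with every positive $z\in L_{p'}(\mathcal{M})\otimes M_{2n}$, and since $\langle\mathrm{block}_{k_j},z\rangle\to\langle\mathrm{block},z\rangle$ by weak convergence while each term on the left is $\ge 0$, the pairing is nonnegative and $(f,g)$ is admissible. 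Finally admissibility gives $\frac12(\|f\|_p+\|g\|_p)\ge c$, which together with $\|f\|_p,\|g\|_p\le c$ forces $\|f\|_p=\|g\|_p=c=\|x\|_{p,n}$.

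The remaining and main obstacle is $p=1$, where $L_1(\mathcal{M})$ is not reflexive and a bounded positive sequence need not have a weakly convergent subsequence. I resolve this by working in the bidual $L_1(\mathcal{M})^{**}=\mathcal{M}^*$: a subnet of $(f_k,g_k)$ converges weak-$*$ to a pair $(F,G)$ of positive functionals (Banach--Alaoglu applied to the product of balls), with $F(1)=G(1)=c$, and I set $f=F_n$, $g=G_n$, the normal parts, so that $f,g\in L_1(\mathcal{M})_+$ and $\|f\|_1=F_n(1)\le F(1)=c$, likewise for $g$. The weak-$*$ limit $\Phi$ of the positive functionals $\mathrm{block}_k$ is again positive, and because the off-diagonal entry $x$ is already normal and $M_{2n}$ is finite-dimensional, the normal part $\Phi_n$ is exactly the block built from $f=F_n$, $g=G_n$; since the normal part of a positive functional on a von Neumann algebra is itself a positive normal functional, this block is a positive element of $L_1(\mathcal{M})\otimes M_{2n}$, i.e.\ $(f,g)$ is admissible. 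The same squeezing as above then yields $\|f\|_1=\|g\|_1=\|x\|_{1,n}$. The crux is precisely this last step: discarding the singular part of the weak-$*$ limit is legitimate only because positivity of the block is inherited by its normal part.
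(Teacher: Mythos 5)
Your proof is correct and follows essentially the same route as the paper's: normalize an (almost) optimal admissible pair to equal norms by the diagonal rescaling $(f,g)\mapsto(\lambda f,\lambda^{-1}g)$, extract a limit by weak/weak-$*$ compactness in the dual, pass positivity of the block to the limit via Proposition \ref{operatorIsPositive}, and squeeze the norms. The differences are organizational rather than substantive --- you run a minimizing sequence with a case split ($p=\infty$ explicit, reflexivity for $1<p<\infty$, bidual for $p=1$) where the paper intersects nested weak-$*$-compact sets $K_\varepsilon$ uniformly in $p$ --- and your explicit verification at $p=1$ that cutting by the central projection preserves positivity of the block (the normal part of a positive functional is positive, and the off-diagonal entries, lying in $L_1(\mathcal{M})\otimes M_n$, are already normal, so the normal part of the limit functional is exactly the block built from $F_n,G_n$) is precisely the justification the paper compresses into ``hence we may assume $f,g\in L_1(\mathcal{M})$.''
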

\begin{proof}
	If \( x = 0 \), we can take \( f = g = 0 \). So suppose that \( x \ne 0 \). Let \( 1 \le q \le \infty \) and \( \frac{1}{p} + \frac{1}{q} = 1\). Let \( L_q(\mathcal{M})^* \) be the dual space of \( L_q(\mathcal{M}) \). Note that \( L_q(\mathcal{M})^* \) is \( L_p(\mathcal{M}) \) when \( q < \infty \) and \( \mathcal{M}^* \) when \( q = \infty \). For \( \varepsilon > 0 \) we define
	\begin{align*}
		K_\varepsilon = \left\{ (f,g) \in L_q(\mathcal{M})_+^* \times L_q(\mathcal{M})_+^* \middle | 
		\begin{array}{l}
		\begin{bmatrix}	f \otimes 1_n & x \\ x^* & g \otimes 1_n 
		\end{bmatrix} \ge 0, \\
		\vphantom{\begin{array}{l}1 \\ 1 \end{array}}   
		\|f\| \le \|x\|_{p,n} + \varepsilon ,\\
		\|g\| \le \|x\|_{p,n} + \varepsilon \end{array} \right\}.
	\end{align*}
	The symbol \( \|\cdot\| \) denotes the norm of \( L_q(\mathcal{M})^* \). 
	The sets \( K_\varepsilon \textrm{, } \varepsilon > 0 \), have the following properties:
	
	Each \( K_\varepsilon \ne \emptyset: \) By definition of \( \|x\|_{p,n} \), there exist \( 0 \ne f,g \in L_p{\mathcal{M}}_+ \) with
	\begin{equation*}
		\begin{bmatrix} f \otimes 1_n & x \\ x^* & g \otimes 1_n \end{bmatrix} \ge 0 \textrm{ and }
		\frac{1}{2}(\|f\|_p + \|g\|_p) < \|x\|_{p,n} + \varepsilon.
	\end{equation*}
	For \( \lambda > 0 \), we get
	\begin{equation*}
		0 \le \begin{bmatrix} \lambda  & 0 \\ 0 & \frac{1}{\lambda} \end{bmatrix}
		\begin{bmatrix}	f \otimes 1_n & x \\ x^* & g \otimes 1_n \end{bmatrix}
		\begin{bmatrix} \lambda  & 0 \\ 0 & \frac{1}{\lambda} 
		\end{bmatrix} = 
		\begin{bmatrix}	\lambda^2f \otimes 1_n & x \\ x^* & \frac{1}{\lambda^2}g \otimes 1_n \end{bmatrix}.
	\end{equation*}
	We put \( \lambda = \sqrt{\frac{\|g\|_p}{\|f\|_p}} \), \( f' = \lambda f \), and \( g' = \frac{1}{\lambda}g \). Then
	\begin{equation*}
		\|f'\|_p = \|g'\|_p =  \sqrt{\|f\|_p\|g\|_p} \le \frac{1}{2}(\|f\|_p+ \|g\|_p) < \|x\|_{p,n} + \varepsilon
	\end{equation*} and 
	\begin{equation*}
		\begin{bmatrix} f' \otimes 1_n & x \\ x^* & g' \otimes 1_n 
		\end{bmatrix} \ge 0.
	\end{equation*}
	This shows that \( (f',g') \in K_\varepsilon \). Next we show that \( K_\varepsilon \) is weak\( ^* \)-closed for every \( \varepsilon > 0 \). We fix \( \varepsilon > 0 \) and \( (f,g) \) as an element of the weak\( ^* \) closure of \( K_\varepsilon \). 
	For \( a = \left[ \begin{smallmatrix} a_{11} & a_{12} \\ a_{21} & a_{22} \end{smallmatrix} \right] \in (L_q(\mathcal{M}) \otimes M_{2n})_+ \) there exist sequences \( (f_m)_{m=1}^\infty \) and \( (g_m)_{m=1}^\infty \) in \( K_\varepsilon \) such that
	\begin{equation*}
		<f \otimes 1_n , a_{11} > = \lim_{m \rightarrow \infty} <f_m \otimes 1_n, a_{11}>
	\end{equation*}
	and
	\begin{equation*}
		<g \otimes 1_n , a_{22} > = \lim_{m \rightarrow \infty} <g_m \otimes 1_n, a_{22}>,
	\end{equation*}
	and \((f_m,g_m) \in K_\varepsilon\) for all \(m \in \mathbb{N}.\) Hence, we get
	\begin{equation*}
		\left< \begin{bmatrix} f \otimes 1_n & x \\ x^* & g \otimes 1_n \end{bmatrix} \Bigg \vert a \right> = 
		\lim_{m \rightarrow \infty} 
		\left< \begin{bmatrix} f_m \otimes 1_n & x \\ x^* & g_m \otimes 1_n \end{bmatrix} \Bigg \vert a \right> \ge 0.	
	\end{equation*}
	Since this holds for every \( a \in (L_q(\mathcal{M}) \otimes M_{2n})_+ \), we can apply Proposition \ref{operatorIsPositive}, and conclude that 
	\begin{equation*}
		\begin{bmatrix}	f \otimes 1_n & x \\ x^* & g \otimes 1_n 
		\end{bmatrix} \ge 0.
	\end{equation*}
	Especially, \( f \) and \( g \) are positive. Taking \( a \in L_q(\mathcal{M}) \) with \( \|a\|_q \le 1 \), we can find a sequence \( (f_m)_{m=1}^\infty \) in \( K_\varepsilon \) such that
	\begin{equation*}
		<f, a> = \lim\limits_{m \to \infty}<f_m,a>
	\end{equation*}
	Hence \( |<f,a>| \le \|x\|_{p,n} + \varepsilon \) and
	\begin{equation*}
		\|f\| = sup \{|<f,a> |~ \|a\|_q \le 1\} \le \|x\|_{p,n} + \varepsilon.
	\end{equation*}
	Similarly, \( \|g\| \le \|x\|_{p,n} + \varepsilon \) and therefore \( (f,g) \in K_\varepsilon \). By the Banach-Alaoglu theorem, the unit ball of \( L_q(\mathcal{M}) \) is compact in the weak\( ^* \)-topology. Hence all sets \( K_\varepsilon \) are compact in the weak\(^*\)-topology.
	
	The sets \( K_\varepsilon \), \( \varepsilon > 0 \), have the finite intersection property: Given \( k \in \mathbb{N} \), \( \varepsilon_1, \cdots, \varepsilon_k > 0 \), we put \( \varepsilon = \min\{\varepsilon_1, \cdots, \varepsilon_k\} \) and get
	\begin{equation*}
		K_\varepsilon \subseteq \bigcap\limits_{i=1}^{k} K_{\varepsilon_i}.
	\end{equation*}
	Combining the finite intersection property and the weak-\( ^* \) compactness, we get
	\begin{equation}\label{kEpsilonNotEmpty}
		\bigcap\limits_{\varepsilon > 0} K_{\varepsilon} \ne \emptyset.
	\end{equation}
	Let \( (f,g) \) be in the set defined in equation (\ref{kEpsilonNotEmpty}). Then we have
	\begin{equation}\label{fGreaterThanX}
		\begin{bmatrix} f \otimes 1_n & x \\ x^* & g \otimes 1_n 
		\end{bmatrix} \ge 0
	\end{equation}
	If \( q < \infty \), then \( f,g \in L_p(\mathcal{M}) \). If \( q = \infty \), then there is a central projection in \( \mathcal{M}^{**} \) which works as projection from \( \mathcal{M}^* \) to \( L_1(\mathcal{M}) \). Hence we may assume that \(f,g \in L_1(\mathcal{M}) \). By construction, we have \( \|f\|_p, \|g\|_p \le \|x\|_{p,n} \). From equation (\ref{fGreaterThanX}), we get \( 2\|x\|_{p,n} \le \|f\|_p + \|g\|_p \). Combining both gives 
	\begin{equation*}
		\|f\|_p = \|x\|_{p,n} = \|g\|_p \qedhere
	\end{equation*}.
\end{proof}

\begin{thm}\label{normForXSelfAdjoint}
	Let \( 1 \le p \le \infty \), \( n \in \mathbb{N} \), and \( x = x^* \in L_p(\mathcal{M}) \otimes M_n \).
	\begin{itemize}
		\item[(i)]  We have \( \|x\|_{p,n} = \inf \left\{ \|f\|_p  | f \in L_p(\mathcal{M})_+, f \otimes 1_n  \pm x \ge 0 \right\} \).
		\item[(ii)] There exists \( f \in L_p(\mathcal{M})_+ \) such that
		\begin{equation*}
 			f \otimes 1_n  \pm x \ge 0  \textrm{ and } \|f\|_p = \|x\|_{p,n}.
		\end{equation*}
	\end{itemize}	
\end{thm}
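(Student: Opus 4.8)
The plan is to reduce the self-adjoint case to the case $f = g$ in the defining infimum of $\|x\|_{p,n}$, exploiting $x = x^*$. The crucial tool is Lemma \ref{orderSelfAdjoint}: applied to the self-adjoint $\tau \otimes Tr$-measurable operators $f \otimes 1_n$ and $x$ over the algebra $(\mathcal{M} \rtimes_{\sigma^{\varphi}} \mathbb{R}) \otimes M_n$, it gives that $\begin{bmatrix} f \otimes 1_n & x \\ x & f \otimes 1_n \end{bmatrix} \ge 0$ if and only if $f \otimes 1_n \pm x \ge 0$. This dictionary between the $\pm$-bound and block positivity is what links the two descriptions of the norm.

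First I would prove the easy inequality $\|x\|_{p,n} \le \inf\{\|f\|_p : f \in L_p(\mathcal{M})_+,\ f \otimes 1_n \pm x \ge 0\}$. For any admissible $f$, Lemma \ref{orderSelfAdjoint} yields positivity of the block matrix whose two diagonal entries both equal $f \otimes 1_n$. This is a legitimate competitor $(f,g)=(f,f)$ in the infimum defining $\|x\|_{p,n}$, contributing $\tfrac{1}{2}(\|f\|_p + \|f\|_p) = \|f\|_p$, so $\|x\|_{p,n} \le \|f\|_p$; taking the infimum over all admissible $f$ gives the claim.

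For the reverse inequality and part (ii) at once, I would invoke Theorem \ref{infIsMinInNorm} to obtain $f, g \in L_p(\mathcal{M})_+$ with $\begin{bmatrix} f \otimes 1_n & x \\ x & g \otimes 1_n \end{bmatrix} \ge 0$ and $\|f\|_p = \|g\|_p = \|x\|_{p,n}$ (here $x^* = x$ has already been used to write the off-diagonal entries as $x$). Since $x = x^*$, conjugating by the self-adjoint unitary $\begin{bmatrix} 0 & 1_n \\ 1_n & 0 \end{bmatrix}$ leaves the off-diagonal entry unchanged and shows that the matrix with $f$ and $g$ interchanged is also positive. Averaging the two positive matrices and putting $h = \tfrac{1}{2}(f+g) \in L_p(\mathcal{M})_+$ gives $\begin{bmatrix} h \otimes 1_n & x \\ x & h \otimes 1_n \end{bmatrix} \ge 0$, hence $h \otimes 1_n \pm x \ge 0$ by Lemma \ref{orderSelfAdjoint}. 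The triangle inequality yields $\|h\|_p \le \tfrac{1}{2}(\|f\|_p + \|g\|_p) = \|x\|_{p,n}$, so $h$ witnesses $\inf\{\|f\|_p : f \otimes 1_n \pm x \ge 0\} \le \|x\|_{p,n}$.

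Combining the two inequalities establishes (i), showing the infimum equals $\|x\|_{p,n}$. The element $h$ constructed above satisfies $\|h\|_p \le \|x\|_{p,n}$, while the equality just proven forces $\|h\|_p \ge \|x\|_{p,n}$; hence $\|h\|_p = \|x\|_{p,n}$, which is exactly (ii). I expect no serious obstacle: the only points needing care are checking that the symmetrization genuinely uses $x = x^*$ (so the swap fixes the off-diagonal term) and that Lemma \ref{orderSelfAdjoint} is being applied to the tensored, generally unbounded $\tau \otimes Tr$-measurable operators and not merely to bounded ones.
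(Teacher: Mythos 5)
Your proposal is correct and follows essentially the same route as the paper's proof: both directions rest on Lemma \ref{orderSelfAdjoint} as the dictionary between \( f \otimes 1_n \pm x \ge 0 \) and block positivity, and the key step of invoking Theorem \ref{infIsMinInNorm}, conjugating by the swap unitary, and averaging to \( \tfrac{1}{2}(f+g) \) is exactly the paper's symmetrization argument. The only cosmetic difference is that the paper handles the easy inequality with an explicit \( \varepsilon \)-approximation of the infimum, whereas you pass directly to the infimum; this changes nothing of substance.
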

\begin{proof}
	Let \(A = \inf \left\{ \|f\|_p | f \in L_p(\mathcal{M})_+, f \otimes 1_n  \pm x \ge 0 \right\} \). By Theorem \ref{infIsMinInNorm}, there exist \( f, g \in L_p(\mathcal{M})_+ \) such that
	\begin{equation}\label{fAndGMajorizesX}
		\|f\|_p = \|g\|_p = \|x\|_{p,n} \textrm{ and }
		\begin{bmatrix} f \otimes 1_n & x \\ x & g \otimes 1_n 
		\end{bmatrix} \ge 0.
	\end{equation}
	It follows then
	\begin{equation*}
		0 \le \begin{bmatrix} 0 & 1 \\ 1 & 0 \end{bmatrix}
		\begin{bmatrix}	f \otimes 1_n & x \\ x & g \otimes 1_n \end{bmatrix}
		\begin{bmatrix} 0 & 1 \\ 1 & 0 \end{bmatrix} = 
		\begin{bmatrix}	g \otimes 1_n & x \\ x & f \otimes 1_n \end{bmatrix}
	\end{equation*}
	Hence we conclude
	\begin{equation*}
		\begin{bmatrix} 
		\frac{1}{2}(f+g) \otimes 1_n & x \\ 
		x & \frac{1}{2}(f+g) \otimes 1_n 
		\end{bmatrix} \ge 0.
	\end{equation*}
	Then Lemma \ref{orderSelfAdjoint} implies that \( \frac{1}{2}(f+g) \otimes 1_n \pm x \ge 0 \). This shows that 
	\begin{equation*}
		A \le \|\frac{1}{2}(f+g)\|_p \le \|x\|_{p,n}.
	\end{equation*}
	For the converse direction, let \( \varepsilon > 0 \). Then there exist \( f \in L_p(\mathcal{M})_+ \) such that \( f \otimes 1_n \pm x \ge 0  \) and \( \|f\|_p \le A + \varepsilon \). It follows from Lemma \ref{orderSelfAdjoint} that
	\begin{equation*}
		\begin{bmatrix} f \otimes 1_n & x \\ x & f \otimes 1_n 
		\end{bmatrix} \ge 0.
	\end{equation*}
	Hence, we get \( \|x\|_{p,n}  \le \|f\|_p \le A + \varepsilon\). Since \(\varepsilon\) is arbitrary, we get \( \|x\|_{p,n} \le A \). this proves (i). To prove (ii), we take \( f \) and \( g \) from equation (\ref{fAndGMajorizesX}). Then we have
	\begin{equation*}
		\frac{1}{2}(f+g) \pm x \ge 0
	\end{equation*}
	and
	\begin{equation*}
		\|x\|_{p,n} \le \frac{1}{2}\|f+g\|_p \le \frac{1}{2}(\|f\|_p + \|g\|_p) = \|x\|_{p,n}.
	\end{equation*}
	This shows that \( \|\frac{1}{2}(f+g) \|_p = \|x\|_{p,n} \).
\end{proof}

\begin{thm}\label{1NormIs UsualNorm}
	Let \( 1 \le p \le \infty \)  and \( x \in L_p(\mathcal{M}) \). Then
	\begin{equation*}
		\|x\|_{p,1} = \|x\|_p.
	\end{equation*}
\end{thm}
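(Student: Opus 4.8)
The plan is to prove the two inequalities \(\|x\|_{p,1}\le\|x\|_p\) and \(\|x\|_p\le\|x\|_{p,1}\) separately. The whole statement is just the case \(n=1\) of the norm definition, so the \(2\times 2\) matrix appearing in \eqref{defNorm} is a genuine \(2\times 2\) matrix of elements of \(L_p(\mathcal{M})\), and the two main tools will be the factorization from Theorem \ref{xAsBoundedOp} and the attainment of the infimum from Theorem \ref{infIsMinInNorm}.

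For the inequality \(\|x\|_{p,1}\le\|x\|_p\) I would exhibit an explicit admissible pair \((f,g)\). Taking the polar decomposition \(x=v|x|\), the computation leading to \eqref{absoluteValueGreaterThanX} already shows (now read with all blocks in \(L_p(\mathcal{M})\) rather than tensored with \(1_n\)) that
\begin{equation*}
	\begin{bmatrix} |x^*| & x \\ x^* & |x| \end{bmatrix}\ge 0 .
\end{equation*}
Since \(|x^*|,|x|\in L_p(\mathcal{M})_+\) with \(\||x|\|_p=\|x\|_p\) and \(\||x^*|\|_p=\|x^*\|_p=\|x\|_p\), the pair \((f,g)=(|x^*|,|x|)\) is admissible for \eqref{defNorm}, and hence \(\|x\|_{p,1}\le\tfrac{1}{2}\bigl(\||x^*|\|_p+\||x|\|_p\bigr)=\|x\|_p\).

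For the reverse inequality \(\|x\|_p\le\|x\|_{p,1}\) I would first invoke Theorem \ref{infIsMinInNorm} to obtain \(f,g\in L_p(\mathcal{M})_+\) with
\begin{equation*}
	\begin{bmatrix} f & x \\ x^* & g \end{bmatrix}\ge 0
	\quad\text{and}\quad \|f\|_p=\|g\|_p=\|x\|_{p,1}.
\end{equation*}
Then Theorem \ref{xAsBoundedOp} (with \(n=1\)) produces \(y\in\mathcal{M}\) with \(\|y\|_\infty\le 1\) and \(x=f^{1/2}yg^{1/2}\). Applying the Hölder inequality across the indices \(\tfrac{1}{2p}+\tfrac{1}{\infty}+\tfrac{1}{2p}=\tfrac1p\), together with the identities \(\|f^{1/2}\|_{2p}=\|f\|_p^{1/2}\) and \(\|g^{1/2}\|_{2p}=\|g\|_p^{1/2}\), gives
\begin{equation*}
	\|x\|_p=\|f^{1/2}yg^{1/2}\|_p\le\|f^{1/2}\|_{2p}\,\|y\|_\infty\,\|g^{1/2}\|_{2p}
	\le\|f\|_p^{1/2}\|g\|_p^{1/2}=\|x\|_{p,1}.
\end{equation*}

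There is no serious obstacle here; the argument is essentially a packaging of results already in hand. The only points requiring a line of care are the trace identity \(\|f^{1/2}\|_{2p}^{2p}=tr(f^{p})=\|f\|_p^{p}\) and the bookkeeping of the Hölder exponents; and one should note that the case \(p=\infty\) is covered uniformly, since there \(f^{1/2},g^{1/2},y\in\mathcal{M}\) and the same chain of inequalities reads off as the submultiplicativity of the operator norm, consistent with the remark that \(\|\cdot\|_{\infty,1}\) is the usual operator norm.
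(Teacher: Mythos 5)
Your proposal is correct and follows essentially the same route as the paper's own proof: the upper bound via the polar decomposition and the positivity of \(\left[\begin{smallmatrix}|x^*| & x \\ x^* & |x|\end{smallmatrix}\right]\) from \eqref{absoluteValueGreaterThanX}, and the lower bound by combining Theorem \ref{infIsMinInNorm} with the factorization \(x=f^{1/2}yg^{1/2}\) of Theorem \ref{xAsBoundedOp} and H\"older's inequality. Your added remarks on the trace identity \(\|f^{1/2}\|_{2p}=\|f\|_p^{1/2}\) and the uniform treatment of \(p=\infty\) are accurate but do not change the argument.
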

\begin{proof}
	Let \( x = v|x| \) be the polar decomposition of \( x \). By equation (\ref{absoluteValueGreaterThanX}), we get \( \left[ \begin{smallmatrix}	|x^*| & x \\ x^* & |x| \end{smallmatrix} \right] \ge 0 \). Hence,
	\begin{equation*}
		\|x\|_{p,1} \le \frac{1}{2}(\|x\|_p + \|x^*\|_p) = \|x\|_p.
	\end{equation*}
	To prove to converse inequality, we apply Theorem \ref{infIsMinInNorm} and Theorem \ref{xAsBoundedOp} and get \( f,g \in L_p(\mathcal{M})_+, \|f\|_p = \|g\|_p = \|x\|_{p,1} \), \( y \in \mathcal{M}, \|y\|_\infty \le 1 \) and \(x = f^{\frac{1}{2}} y g^{\frac{1}{2}}.\)
	Hence
	\begin{equation*}
		\|x\|_p = \|f^{\frac{1}{2}} y g^{\frac{1}{2}}\|_p \le \|f\|_p^{\frac{1}{2}} \|y\|_\infty \|g\|_p^{\frac{1}{2}} = \|x\|_{p,1}. \qedhere
	\end{equation*}
\end{proof}

\section{COMPLETELY ORDER BOUNDED MAPS}
In this section, we define completely order bounded maps from \( L_p \) to \( L_q \) and show the decomposition of such maps for \( p= \infty, q \) arbitrary and for \( p \) arbitrary, \( q = 1 \). For \( 2p < q < \infty \) we give an example of a completely order bounded map which is not decomposable.

Throughout this Section, \( \mathcal{M} \) and \( \mathcal{N} \) are von Neumann algebras with no further restrictions unless stated explicitly. If \( 1 \le p,q \le \infty, n \in \mathbb{N} \), and \( T: L_p(\mathcal{M})  \to L_q(\mathcal{N})\) is a linear map, then 
\begin{equation*}
T_n: L_p(\mathcal{M}) \otimes M_n \to L_q(\mathcal{N}) \otimes M_n, [x_{ij}] \mapsto [T(x_{ij})].
\end{equation*}
We need the notion of decomposable maps which were introduced for \( C^* \)-algebras in \cite{Ha2} and extended to non-commutative \( L_p \)-spaces in \cite{Pi} and \cite{JuRu}. The above map \(T\) is decomposable if there exist completely positive maps \( S_1,S_2: L_p(\mathcal{M}) \to L_q(\mathcal{N})\) such that the induced map
\begin{equation}\label{Tdecomposable}
	\Phi: L_p(\mathcal{M}) \otimes M_2 \to L_q(\mathcal{N}) \otimes M_2, 
	\begin{bmatrix} x_{11} & x_{12} \\ x_{21} & x_{22}  
	\end{bmatrix} \mapsto
	\begin{bmatrix} S_1(x_{11}) & T(x_{12}) \\ T(x_{21}^*)^* & S_2(x_{22})  
	\end{bmatrix}
\end{equation}
is completely positive. The decomposable norm \( \|T\|_{dec} \) is defined by
\begin{equation*}
	\|T\|_{dec} = \inf \left\{ \max \left\{ \|S_1\|, \|S_2\| \right\} \right\}
\end{equation*}
where the infimum is taken over all completely positive maps \( S_1 \) and \( S_2 \) in (\ref{Tdecomposable}).

\begin{defn}
	Let \( 1 \le p,q \le \infty \). A linear map \( T: L_p(\mathcal{M}) \to L_q(\mathcal{N}) \) is called completely order bounded, if 
	\begin{equation*}
		\|T\|_{cob} = \sup \left\{ \|T_n(x)\|_{q,n} ~|~ x \in L_p(\mathcal{M}) \otimes M_n, \|x\|_{p,n} \le 1,~ n\in \mathbb{N} \right\} < \infty.
	\end{equation*}
\end{defn}

The name completely order bounded will be justified by Theorem \ref{orderIntervalToOrderInterval} where we show that a completely order bounded map maps order intervals to order intervals uniformly over all matrix levels.

\begin{prop}
	Let \( 1 \le p,q \le \infty \) and \( T: L_p(\mathcal{M}) \to L_q(\mathcal{N}) \)  be completely positive. Then \( T \) is completely order bounded and 
	\begin{equation*}
		\|T\|_{cob} = \|T\| = \sup \left\{ \|T(x)\|_q~|~x \in L_p(\mathcal{M})_+,~ \|x\|_p \le 1  \right\}.
	\end{equation*}
\end{prop}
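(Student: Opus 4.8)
The plan is to prove the two inequalities $\|T\| \le \|T\|_{cob}$ and $\|T\|_{cob} \le \|T\|$ separately, the first being essentially immediate and the second carrying the real content, and then to address finiteness.

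First I would dispose of $\|T\| \le \|T\|_{cob}$. For positive $x \in L_p(\mathcal{M})_+$ with $\|x\|_p \le 1$, I regard $x$ as an element of $L_p(\mathcal{M}) \otimes M_1$. The theorem identifying $\|\cdot\|_{p,1}$ with $\|\cdot\|_p$ gives $\|x\|_{p,1} = \|x\|_p \le 1$, while $T_1 = T$ and $\|T(x)\|_{q,1} = \|T(x)\|_q$. Since the $n=1$ term is among those in the supremum defining $\|T\|_{cob}$, this yields $\|T(x)\|_q \le \|T\|_{cob}$ for every such $x$, and taking the supremum over positive $x$ in the unit ball gives $\|T\| \le \|T\|_{cob}$.

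For the reverse inequality, fix $n$ and $x \in L_p(\mathcal{M}) \otimes M_n$ with $\|x\|_{p,n} \le 1$. By Theorem \ref{infIsMinInNorm} I choose $f, g \in L_p(\mathcal{M})_+$ with $\|f\|_p = \|g\|_p = \|x\|_{p,n}$ that realize the positivity of the block matrix $\begin{bmatrix} f \otimes 1_n & x \\ x^* & g \otimes 1_n \end{bmatrix} \in (L_p(\mathcal{M}) \otimes M_{2n})_+$. The key step is to apply $T_{2n}$ to this matrix. Because $T$ is positive it preserves the involution ($T(a^*) = T(a)^*$, via the decomposition of a self-adjoint element into its positive and negative parts), so the off-diagonal block $x^*$ is sent to $T_n(x)^*$ and the diagonal blocks $f \otimes 1_n$, $g \otimes 1_n$ to $T(f) \otimes 1_n$, $T(g) \otimes 1_n$. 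Complete positivity then forces $\begin{bmatrix} T(f) \otimes 1_n & T_n(x) \\ T_n(x)^* & T(g) \otimes 1_n \end{bmatrix} \ge 0$, and by the very definition of $\|\cdot\|_{q,n}$ this pair $(T(f), T(g))$ witnesses $\|T_n(x)\|_{q,n} \le \frac{1}{2}\left(\|T(f)\|_q + \|T(g)\|_q\right)$. Since $f, g \ge 0$, the definition of $\|T\|$ gives $\|T(f)\|_q \le \|T\|\,\|f\|_p$ and $\|T(g)\|_q \le \|T\|\,\|g\|_p$, so the right-hand side is at most $\frac{\|T\|}{2}\left(\|f\|_p + \|g\|_p\right) = \|T\|\,\|x\|_{p,n} \le \|T\|$. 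Taking the supremum over $n$ and over $x$ in the unit ball of $\|\cdot\|_{p,n}$ yields $\|T\|_{cob} \le \|T\|$, and combined with the first step this gives the asserted equality $\|T\|_{cob} = \|T\|$.

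The remaining point, and the one I expect to be the main obstacle, is the finiteness $\|T\| < \infty$, which is what actually upgrades the equality to the statement that $T$ is completely order bounded. The chain of inequalities above holds in the extended reals irrespective of finiteness, so I must argue separately that the supremum defining $\|T\|$ is finite. I would settle this by invoking the automatic boundedness of positive linear maps between the ordered Banach spaces $L_p(\mathcal{M})$ and $L_q(\mathcal{N})$, whose positive cones are closed, normal, and generating; alternatively, one regards boundedness of $T$ as part of the standing meaning of a completely positive map. Once $\|T\| < \infty$ is secured, the identity $\|T\|_{cob} = \|T\|$ immediately delivers complete order boundedness.
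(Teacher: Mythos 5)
Your proof is the paper's proof in all essentials: the bound $\|T\|_{cob} \le \|T\|$ comes from Theorem \ref{infIsMinInNorm} plus complete positivity applied to the optimal block matrix, and the reverse bound from the identification of $\|\cdot\|_{p,1}$ with $\|\cdot\|_p$. There is, however, one slip in your first step. You restrict to \emph{positive} $x$ in the unit ball, and the supremum over positive $x$ yields only $\Lambda \le \|T\|_{cob}$, where $\Lambda = \sup\{\|T(x)\|_q \mid x \in L_p(\mathcal{M})_+,\ \|x\|_p \le 1\}$; it does not yield $\|T\| \le \|T\|_{cob}$, because at that stage you cannot yet invoke $\|T\| = \Lambda$ --- that identity is part of what the proposition asserts, and for positive maps between $L_p$-spaces it is not a priori obvious that the operator norm is attained on the positive cone. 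Combined with your second step, what you have literally proved is $\Lambda \le \|T\|_{cob} \le \|T\|$, which leaves the three-way equality open. The repair is one line and is exactly the paper's route: the theorem $\|x\|_{p,1} = \|x\|_p$ carries no positivity hypothesis, so running your first step over the \emph{whole} unit ball gives $\|T\| \le \|T\|_{cob}$ honestly; and since the witnesses $f, g$ from Theorem \ref{infIsMinInNorm} are positive, you may sharpen your second step to $\|T(f)\|_q \le \Lambda \|f\|_p$ and $\|T(g)\|_q \le \Lambda \|g\|_p$, closing the chain $\Lambda \le \|T\| \le \|T\|_{cob} \le \Lambda$. Your closing paragraph on finiteness addresses a point the paper passes over in silence; either treating boundedness as part of the standing definition or deriving it from positivity (closed cones, closed graph theorem) is a legitimate way to settle it, and is if anything more careful than the original.
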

Here, \( \|T\| \) means the usual operator norm of \( T \) as a bounded operator on a normed vector space.
\begin{proof}
	Let \( \Lambda = \sup \left\{ \|T(x)\|_q~|~x \in L_p(\mathcal{M})_+,~ \|x\|_p \le 1  \right\} \).
	By Theorem \ref{1NormIs UsualNorm}, we have
	\begin{equation*}
		\sup \left\{ \|T(x)\|_q~|~x \in L_p(\mathcal{M})_+,~ \|x\|_p \le 1  \right\} \le \|T\| \le \|T\|_{cob}.
	\end{equation*}
	For the opposite inequality, let \( n \in \mathbb{N}, x \in L_p(\mathcal{M}) \otimes M_n \) with \( \|x\|_{p,n} \le 1 \). By Theorem \ref{infIsMinInNorm}, there exist \( f,g \in L_q(\mathcal{N})_+ \) such that 
	\begin{equation*}
		\|f\|_p = \|g\|_p = \|x\|_{p,n} \textrm{ and }
		\begin{bmatrix}	f \otimes 1_n & x \\ x^* & g \otimes 1_n  
		\end{bmatrix} \ge 0.
	\end{equation*}
	Since \( T \) is completely positive, we have \( T_n(x^*) = T_n(x)^* \) and
	\begin{equation*}
		\begin{bmatrix}	T(f) \otimes 1_n & T_n(x) \\ T_n(x)^* & T(g) \otimes 1_n 
		\end{bmatrix} \ge 0
	\end{equation*}
	Hence,
	\begin{equation*}
		\|T_n(x)\|_{q,n} \le \frac{1}{2}(\|T(f)\|_q+\|T(g)\|_q)
		\le \Lambda \|x\|_{p,n} \le \Lambda. \qedhere
	\end{equation*}
\end{proof}
A decomposable map \(T\) is completely order bounded and \(\|T\|_{cob} \le \|T\|_{dec}\). Next, we show that the composition of completely order bounded maps is completely order bounded.

\begin{thm}
	Let \( 1 \le p_1, p_2, p_3 \le \infty, \mathcal{M}_1, \mathcal{M}_2, \mathcal{M}_3 \) be von Neumann algebras, and \( T_1: L_{p_1}(\mathcal{M}_1) \to L_{p_2}(\mathcal{M}_2), T_2: L_{p_2}(\mathcal{M}_2) \to L_{p_3}(\mathcal{M}_3)\) be completely order bounded maps. Then the composition \( T_2 \circ T_1\) is completely order bounded and 
	\begin{equation*}
		\|T_2 \circ T_1\|_{cob} \le \|T_2\|_{cob} \|T_1\|_{cob}.
	\end{equation*}
\end{thm}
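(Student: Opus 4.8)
The plan is to reduce the statement to the amplification identity $(T_2\circ T_1)_n=(T_2)_n\circ(T_1)_n$ together with a scaled form of the defining bound for completely order bounded maps. First I would record the elementary observation that for every $n\in\mathbb{N}$ and every $[x_{ij}]\in L_{p_1}(\mathcal{M}_1)\otimes M_n$ we have
\begin{equation*}
	(T_2\circ T_1)_n([x_{ij}]) = [(T_2\circ T_1)(x_{ij})] = [T_2(T_1(x_{ij}))] = (T_2)_n\big((T_1)_n([x_{ij}])\big),
\end{equation*}
so that the $n$-th amplification of the composition is the composition of the $n$-th amplifications. This is just unwinding the definition of $T_n$ and uses nothing beyond linearity.

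Next I would upgrade the definition of $\|\cdot\|_{cob}$, which only asserts a bound on the unit ball, to the homogeneous estimate $\|T_n(x)\|_{q,n}\le\|T\|_{cob}\,\|x\|_{p,n}$ valid for all $x$. For $x=0$ this is trivial, and for $x\neq 0$ one applies the definition to $x/\|x\|_{p,n}$ (which has norm $1$) and then uses property (iv) of the matrix norm, namely $\|\lambda x\|_{p,n}=|\lambda|\,\|x\|_{p,n}$, together with the linearity of $T_n$, to clear the denominator. Thus both $T_1$ and $T_2$ satisfy $\|(T_i)_n(x)\|\le\|T_i\|_{cob}\,\|x\|$ at every matrix level $n$.

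With these two ingredients in hand, the conclusion is a direct chaining of inequalities. For $x\in L_{p_1}(\mathcal{M}_1)\otimes M_n$ with $\|x\|_{p_1,n}\le 1$, set $y=(T_1)_n(x)$; then $\|y\|_{p_2,n}\le\|T_1\|_{cob}\,\|x\|_{p_1,n}\le\|T_1\|_{cob}$, and applying the scaled bound for $T_2$ gives
\begin{equation*}
	\|(T_2\circ T_1)_n(x)\|_{p_3,n} = \|(T_2)_n(y)\|_{p_3,n} \le \|T_2\|_{cob}\,\|y\|_{p_2,n} \le \|T_2\|_{cob}\,\|T_1\|_{cob}.
\end{equation*}
Taking the supremum over all such $x$ and over $n\in\mathbb{N}$ yields both the finiteness of $\|T_2\circ T_1\|_{cob}$, hence complete order boundedness of the composition, and the submultiplicative bound $\|T_2\circ T_1\|_{cob}\le\|T_2\|_{cob}\,\|T_1\|_{cob}$.

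I do not expect any genuine obstacle here: the argument is the standard submultiplicativity proof for an operator norm defined as a supremum over unit balls, and the only point requiring a little care is the passage from the unit-ball supremum in the definition to the homogeneous estimate, which is precisely what property (iv) of the matrix norm supplies.
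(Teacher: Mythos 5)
Your proposal is correct and follows essentially the same route as the paper: the paper's proof is precisely the two-line chaining \( \|T_{2,n}(T_{1,n}(x))\|_{p_3,n} \le \|T_2\|_{cob}\,\|T_{1,n}(x)\|_{p_2,n} \le \|T_2\|_{cob}\,\|T_1\|_{cob} \) for \( \|x\|_{p_1,n} \le 1 \), with the amplification identity and the homogeneous estimate used implicitly. Your only addition is to make explicit the scaling step via property (iv) of the matrix norm, which the paper takes for granted; that is a harmless elaboration, not a different argument.
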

\begin{proof}
	Let \( n \in \mathbb{N} \) and \( x \in L_{p_1}(\mathcal{M}_1) \otimes M_n \) such that \( \|x\|_{p_1,n} \le 1 \). Then 
	\begin{equation*}
		\|T_{2,n}(T_{1,n}(x))\|_{p_3,n} \le \|T_2\|_{cob}\|T_{1,n}(x))\|_{p_2,n} \le \|T_2\|_{cob}\|T_1\|_{cob}. \qedhere
	\end{equation*}
\end{proof}

The next theorem justifies the name completely order bounded: Completely order bounded maps map order intervals into order intervals uniformly over all matrix levels.

\begin{thm}\label{orderIntervalToOrderInterval}
	Let \( 1 \le p, q \le \infty \), \( T:L_p(\mathcal{M}) \to L_q(\mathcal{N}) \) be completely order bounded, and \( f \in L_p(\mathcal{M})_+ \). Then there exist \( g_1, g_2 \in L_q(\mathcal{N})_+ \) such that \( \|g_1\|_q, \|g_2\|_q \le \|T\|_{cob} \|f\|_p \) and for all \(n \in \mathbb{N}, x \in L_p(\mathcal{M}) \otimes M_n\) 
	\begin{equation}\label{orderBounded}
		\begin{bmatrix} f \otimes 1_n & x \\ x^* & f \otimes 1_n  
		\end{bmatrix} \ge 0 \textrm{ implies }
		\begin{bmatrix}	g_1 \otimes 1_n & T_n(x) \\ T_n(x)^* & g_2 \otimes 1_n  
		\end{bmatrix} \ge 0.
	\end{equation}
\end{thm}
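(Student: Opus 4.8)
The plan is to recognise the hypothesis as a statement about the norm \(\|\cdot\|_{p,n}\) and then to extract a single pair \((g_1,g_2)\) that works simultaneously at every matrix level by a weak\(^*\)-compactness argument modelled on the proof of Theorem \ref{infIsMinInNorm}. First I would observe that if \(\left[\begin{smallmatrix} f\otimes 1_n & x \\ x^* & f\otimes 1_n\end{smallmatrix}\right]\ge 0\), then taking this very matrix in the definition of the norm gives \(\|x\|_{p,n}\le\frac12(\|f\|_p+\|f\|_p)=\|f\|_p\), hence \(\|T_n(x)\|_{q,n}\le\|T\|_{cob}\|x\|_{p,n}\le\|T\|_{cob}\|f\|_p=:\Lambda\). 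For a \emph{fixed} pair \((n,x)\) satisfying the hypothesis, Theorem \ref{infIsMinInNorm} then already produces \(g_1,g_2\in L_q(\mathcal{N})_+\) with \(\|g_1\|_q=\|g_2\|_q=\|T_n(x)\|_{q,n}\le\Lambda\) and the desired positive \(2\times 2\) block matrix. The whole difficulty is therefore uniformity: the same \(g_1,g_2\) must serve all \(n\) and all admissible \(x\) at once.

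To achieve this I would set, for each admissible pair \((n,x)\),
\begin{equation*}
K_{n,x}=\left\{(g_1,g_2):\ \|g_1\|,\|g_2\|\le\Lambda,\ \left[\begin{smallmatrix} g_1\otimes 1_n & T_n(x) \\ T_n(x)^* & g_2\otimes 1_n\end{smallmatrix}\right]\ge 0\right\},
\end{equation*}
where, exactly as in Theorem \ref{infIsMinInNorm}, \(g_1,g_2\) range over \(L_{q'}(\mathcal{N})^*_+\) with \(\tfrac1q+\tfrac1{q'}=1\) and positivity of the block matrix is read off through the duality of Proposition \ref{operatorIsPositive}, i.e.\ by pairing against every element of \((L_{q'}(\mathcal{N})\otimes M_{2n})_+\). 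Each \(K_{n,x}\) is nonempty by the previous paragraph. Since each defining constraint is a weak\(^*\)-closed condition (\(\|\cdot\|\le\Lambda\) is a ball, and each pairing inequality is a closed half-space), \(K_{n,x}\) is weak\(^*\)-closed, and being contained in a product of two balls of radius \(\Lambda\) it is weak\(^*\)-compact by Banach--Alaoglu.

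The key step is the finite intersection property of the family \(\{K_{n,x}\}\). Given finitely many admissible pairs \((n_1,x_1),\dots,(n_k,x_k)\), I would form \(N=n_1+\dots+n_k\) and the block-diagonal element \(x=x_1\oplus\dots\oplus x_k\in L_p(\mathcal{M})\otimes M_N\). Reorganising the \(2N\times 2N\) matrix by a permutation shows that \(\left[\begin{smallmatrix} f\otimes 1_N & x \\ x^* & f\otimes 1_N\end{smallmatrix}\right]\) is the direct sum of the blocks \(\left[\begin{smallmatrix} f\otimes 1_{n_i} & x_i \\ x_i^* & f\otimes 1_{n_i}\end{smallmatrix}\right]\ge 0\), hence is itself positive, so \((N,x)\) is admissible and \(K_{N,x}\ne\emptyset\). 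Because \(T_N\) acts entrywise, \(T_N(x)=T_{n_1}(x_1)\oplus\dots\oplus T_{n_k}(x_k)\) and \(g_j\otimes 1_N\) is correspondingly block diagonal; thus any \((g_1,g_2)\in K_{N,x}\) yields, after the same permutation, a direct sum whose positivity forces positivity of each summand \(\left[\begin{smallmatrix} g_1\otimes 1_{n_i} & T_{n_i}(x_i) \\ T_{n_i}(x_i)^* & g_2\otimes 1_{n_i}\end{smallmatrix}\right]\). Hence \((g_1,g_2)\in\bigcap_{i=1}^k K_{n_i,x_i}\), proving the finite intersection property. This block-diagonal collapse of infinitely many matrix-level conditions into one is the heart of the argument and the step I expect to be the main obstacle to formulate cleanly.

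Finally, compactness together with the finite intersection property gives a pair \((g_1,g_2)\in\bigcap_{(n,x)}K_{n,x}\), which by construction satisfies \(\|g_1\|,\|g_2\|\le\Lambda=\|T\|_{cob}\|f\|_p\) and the required implication \eqref{orderBounded} for every \(n\) and every admissible \(x\). When \(1<q\le\infty\) we have \(L_{q'}(\mathcal{N})^*=L_q(\mathcal{N})\) and the \(g_j\) already lie in \(L_q(\mathcal{N})_+\); when \(q=1\) I would, exactly as at the end of Theorem \ref{infIsMinInNorm}, push \(g_1,g_2\) from \(\mathcal{N}^*\) down to \(L_1(\mathcal{N})\) via the central projection in \(\mathcal{N}^{**}\), which preserves both the norm bound and the positivity, completing the proof.
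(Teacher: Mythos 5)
Your proposal is correct and follows essentially the same route as the paper: you reduce to Theorem \ref{infIsMinInNorm} for each fixed admissible pair \((n,x)\), define the same weak\(^*\)-compact sets of candidate pairs \((g_1,g_2)\) (in \(L_q(\mathcal{N})_+\) for \(q>1\), in \(\mathcal{N}^*_+\) for \(q=1\)), establish the finite intersection property by the same block-diagonal stacking of the \(x_i\), and handle \(q=1\) by the same central projection from \(\mathcal{N}^*\) onto \(L_1(\mathcal{N})\). Your explicit compression argument showing that positivity of the big direct-sum matrix forces positivity of each summand is a point the paper leaves implicit, but the substance is identical.
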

\begin{proof}
	For \( q > 1, n\in \mathbb{N}, x \in L_p(\mathcal{M}) \otimes M_n \) with \( \left[ \begin{smallmatrix} f \otimes 1_n & x \\ x^* & f \otimes 1_n  \end{smallmatrix} \right] \ge 0 \), we put
	\begin{multline*}
		K(x) = \left\{ (g_1,g_2) \in L_q(\mathcal{N})_+ \times L_q(\mathcal{N})_+ \middle|  
		\begin{array}{l}
		\begin{bmatrix}	g_1 \otimes 1_n & T_n(x) \\ T_n(x)^* & g_2 \otimes 1_n 
		\end{bmatrix} \ge 0, \\
		\vphantom{\begin{array}{l}1 \\ 1 \end{array}}   \|g_1\|_q,\|g_2\|_q \le \|T\|_{cob} \|f\|_p 
		\end{array} \right\}.
	\end{multline*}
	For \( q = 1 \) we define a similar set, but take pairs \( (g_1,g_2) \in \mathcal{N}^*_+ \times \mathcal{N}^*_+ \) instead of \( L_1(\mathcal{N})_+ \times L_1(\mathcal{N})_+ \). 
	By Theorem \ref{infIsMinInNorm}, \( K(x) \) is not empty. Further, \( K(x) \) is weak\( ^* \)-closed. This is proved similarly as in the proof of Theorem \ref{infIsMinInNorm}. By the Banach-Alaoglu theorem, the unit ball of \( L_q(\mathcal{N}) \) is compact in the weak\( ^* \)-topology for \(q > 1\). The same holds for \(\mathcal{N}^*\). Hence all sets \( K(x) \) are compact in the weak \( ^* \)-topology. The sets \( K(x), x \in L_p(\mathcal{N}) \otimes M_n, n\in \mathbb{N}, \) have the finite intersection property: For \( k \in \mathbb{N}, n_1, \cdots, n_k \in \mathbb{N} \), and \( x_i \in L_p(\mathcal{M}) \otimes M_{n_i} \) with
	\begin{equation*}
		\begin{bmatrix}	f \otimes 1_{n_i} & x_{n_i} \\ 
		x_{n_i}^* & f \otimes 1_{n_i}  
		\end{bmatrix} \ge 0,~ i\in \{1, \cdots, k\},
	\end{equation*}
	we put all \( x_i \) in the diagonal matrix \( x = \textrm{diag}(x_1, \cdots, x_k) \) and set \( n = \sum_{i=1}^{k} n_i \). Then
	\begin{equation*}
		\begin{bmatrix}
		f \otimes 1_{n} & x \\ 
		x^* & f \otimes 1_{n}  
		\end{bmatrix} \ge 0.
	\end{equation*}
	By Theorem \ref{infIsMinInNorm}, there exist \(  g_1, g_2 \in L_q(\mathcal{N})_+ \) such that
	\begin{equation*}
		\begin{bmatrix}
		g_1 \otimes 1_n & T_n(x) \\ 
		T_n(x)^* & g_2 \otimes 1_n 
		\end{bmatrix} \ge 0
		\textrm{ and } \|g_1\|, \|g_2\| \le \|T\|_{cob} \|f\|_p.
	\end{equation*}
	Hence, \( (g_1, g_2) \in K(x_i) \textrm{ for all } i \in \{1, \cdots, k\}\). We conclude that 
	\begin{equation*}
		\bigcap \limits_{n \in \mathbb{N},~ x \in L_p(\mathcal{M}) \otimes M_n} K(x) \ne \emptyset.
	\end{equation*}
	We take a pair \( (g_1,g_2) \) of this set. If \( q > 1 \), this pair fulfils \eqref{orderBounded}. If \( q = 1 \) there is a central projection \( z \in \mathcal{N}^{**} \) which maps \( \mathcal{N}^* \) to \( L_1(\mathcal{N}) \). Then the pair \( (zg_1,zg_2) \) is in \(L_1(\mathcal{N})_+ \times L_1(\mathcal{N})_+ \) and fulfils \eqref{orderBounded}.
\end{proof}

\begin{thm}\label{mapFactoredThroughLInfinity}
	Let \(1 \le q \le \infty\) and let \( T: \mathcal{M} \to L_q(\mathcal{N}) \) be completely order bounded. Then there exist \( f,g \in L_q(\mathcal{N})_+ \) and a completely order bounded map \({ S : \mathcal{M} \to \mathcal{N}} \) such that
	\begin{equation*}
	T(x) = f^{\frac{1}{2}}S(x)g^{\frac{1}{2}} \textrm{ for all } x \in \mathcal{M}
	\end{equation*}
	and 
	\begin{equation*}
	\|f\|_q, \|g\|_q \le \|T\|_{cob},~ \|S\|_{cob} \le 1.
	\end{equation*}
\end{thm}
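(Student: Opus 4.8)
The plan is to manufacture $f$ and $g$ from the order-interval theorem applied to the unit of $\mathcal{M}$, and then to read off $S$ from the factorization theorem, checking that the pieces assemble into a single linear complete contraction. Since $p=\infty$ here, $L_\infty(\mathcal{M})=\mathcal{M}$ and $\|1\|_\infty=1$, so the first step is simply to apply Theorem \ref{orderIntervalToOrderInterval} to $1\in\mathcal{M}_+$. This yields $f,g\in L_q(\mathcal{N})_+$ with $\|f\|_q,\|g\|_q\le\|T\|_{cob}$ such that, for every $n$ and every $x\in\mathcal{M}\otimes M_n$,
\[
\begin{bmatrix} 1\otimes 1_n & x \\ x^* & 1\otimes 1_n \end{bmatrix}\ge 0
\quad\Longrightarrow\quad
\begin{bmatrix} f\otimes 1_n & T_n(x) \\ T_n(x)^* & g\otimes 1_n \end{bmatrix}\ge 0 .
\]
The left-hand positivity is exactly the condition $\|x\|_{\infty,n}\le 1$, so this says that $T$ carries operator-norm unit balls into the order intervals governed by $f$ and $g$, uniformly in $n$.

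Next I would define $S$. For $x\in\mathcal{M}$ with $\|x\|_\infty\le 1$ the implication above at $n=1$ gives positivity of $\left[\begin{smallmatrix} f & T(x) \\ T(x)^* & g \end{smallmatrix}\right]$, so Theorem \ref{xAsBoundedOp} produces a unique $S(x)\in\mathcal{N}$ with $\|S(x)\|_\infty\le 1$, with $\mathrm{supp}(f)\,S(x)\,\mathrm{supp}(g)=S(x)$, and with $T(x)=f^{\frac12}S(x)g^{\frac12}$. I would then extend $S$ to all of $\mathcal{M}$ by positive homogeneity, setting $S(0)=0$ and $S(x)=\|x\|_\infty\,S(x/\|x\|_\infty)$ for $x\ne 0$; one checks immediately that the factorization $T(x)=f^{\frac12}S(x)g^{\frac12}$ and the support identity $\mathrm{supp}(f)\,S(x)\,\mathrm{supp}(g)=S(x)$ persist for every $x\in\mathcal{M}$.

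With these two properties in hand, linearity of $S$ is forced by uniqueness: given $x_1,x_2\in\mathcal{M}$ and $\lambda\in\mathbb{C}$, both $S(\lambda x_1+x_2)$ and $\lambda S(x_1)+S(x_2)$ satisfy the support condition relative to $f$ and $g$ and are sent by $w\mapsto f^{\frac12}w g^{\frac12}$ to the same element $T(\lambda x_1+x_2)$, so Lemma \ref{equalWhenLessSupp} (with $a=f$, $b=g$) forces them to coincide. To bound $\|S\|_{cob}$, fix $n$ and $z=[z_{ij}]\in\mathcal{M}\otimes M_n$ with $\|z\|_{\infty,n}\le 1$. Conjugating $S_n(z)=[S(z_{ij})]$ by $f^{\frac12}\otimes 1_n$ and $g^{\frac12}\otimes 1_n$ recovers $T_n(z)$ entrywise, and $S_n(z)$ obeys the matrix support condition; since $\|z\|_{\infty,n}\le 1$ the displayed implication gives positivity of $\left[\begin{smallmatrix} f\otimes 1_n & T_n(z) \\ T_n(z)^* & g\otimes 1_n \end{smallmatrix}\right]$, so the uniqueness clause of Theorem \ref{xAsBoundedOp} at level $n$ identifies $S_n(z)$ with the distinguished contraction $y$ of that theorem, whence $\|S_n(z)\|_{\infty,n}=\|y\|_\infty\le 1$. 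Taking the supremum over $n$ and $z$ gives $\|S\|_{cob}\le 1$.

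I expect the only genuinely delicate point to be the bookkeeping that makes the uniqueness applicable, namely verifying that the homogeneous extension preserves the support identity so that Lemma \ref{equalWhenLessSupp} can be invoked both for linearity and for the level-$n$ identification of $S_n(z)$. Producing $f$ and $g$, the norm estimate $\|f\|_q,\|g\|_q\le\|T\|_{cob}$, and the factorization itself are then immediate consequences of the cited results.
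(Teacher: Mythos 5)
Your proposal is correct and follows essentially the same route as the paper's proof: apply Theorem \ref{orderIntervalToOrderInterval} to the unit of \(\mathcal{M}\) to produce \(f,g\), define \(S\) on the unit ball via the factorization and uniqueness clause of Theorem \ref{xAsBoundedOp}, and obtain linearity and the bound \(\|S_n(z)\|_{\infty,n}\le 1\) by invoking Lemma \ref{equalWhenLessSupp} together with the support identity at each matrix level. Your explicit homogeneous extension from the unit ball merely makes precise a normalization step the paper leaves implicit; otherwise the two arguments coincide.
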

Note that in case of a linear map from \( \mathcal{M} \) to \( \mathcal{N} \) completely order bounded is identical to completely bounded.
\begin{proof}
	By Theorem \ref{orderIntervalToOrderInterval}, there exist \( f, g \in L_q(\mathcal{N})\), such that \( \|f\|_q, \|g\|_q \le \|T\|_{cob}\) and
	\begin{equation*}
	\left[ \begin{array}{cc}
	f \otimes 1_n & T_n(x) \\ 
	T_n(x)^* & g \otimes 1_n  
	\end{array} \right] \ge 0 \textrm{ for all } n\in \mathbb{N}, x \in \mathcal{M} \otimes M_n, \|x\|_{\infty,n} \le 1.
	\end{equation*}
	Let \(x \in \mathcal{M}\). According to Theorem \ref{xAsBoundedOp}, there is an unique \(y \in \mathcal{N}, \|y\|_{\infty} \le 1\) such that \(T(x) = f^{\frac{1}{2}}yg^{\frac{1}{2}}\) and \(\textrm{supp}(f)\cdot y \cdot \textrm{supp}(g) = y\). We put \(S(x) = y\). Then \(S\) is a map from \(\mathcal{M}\) to \(\mathcal{N}\).
	If \(x, x_1, x_2 \in\mathcal{N}, x = x_1 + x_2,\) then there are  \(y, y_1, y_2 \in \mathcal{N}\) such that \(T(x) =  f^{\frac{1}{2}}yg^{\frac{1}{2}}, \textrm{supp}(f) \cdot y \cdot \textrm{supp}(g) = y\) and \(T(x_i) = f^{\frac{1}{2}}y_ig^{\frac{1}{2}}, \textrm{supp}(f) \cdot y_i \cdot \textrm{supp}(g) = y_i, i=1,2\). Then we get
	\begin{equation*}
		f^{\frac{1}{2}}yg^{\frac{1}{2}} = T(x) = T(x_1) + T(x_2) = f^{\frac{1}{2}}(y_1+y_2)(g^{\frac{1}{2}}),
	\end{equation*}
	\begin{equation*}
		\textrm{supp}(f) \cdot (y_1+y_2) \cdot \textrm{supp}(g) = y_1 + y_2.
	\end{equation*}		
	Hence we conclude from Lemma \ref{equalWhenLessSupp} that \(y = y_1 + y_2\) which means that \(S\) is additive. Similarly, we show that \(S(\lambda x) = \lambda S(x) \textrm{ for } \lambda \in \mathbb{C}, x \in \mathcal{M}\). Now for \( n \in \mathbb{N}, \) let \( x \in \mathcal{M} \otimes M_n\) with \(\|x\|_{\infty,n} \le 1\). According Theorem \ref{xAsBoundedOp}, there is \(y \in \mathcal{N} \otimes M_n\) such that \((\textrm{supp}(f) \otimes 1_n) \cdot y \cdot (\textrm{supp}(g) \otimes 1_n) = y\) and \(T_n(x) = (f^{\frac{1}{2}} \otimes 1_n)y(g^{\frac{1}{2}} \otimes 1_n) \). Then, we have
	\begin{equation*}
		(f^{\frac{1}{2}} \otimes 1_n)S_n(x)(g^{\frac{1}{2}} \otimes 1_n)  = T_n(x) = (f^{\frac{1}{2}} \otimes 1_n)y(g^{\frac{1}{2}} \otimes 1_n).
	\end{equation*}
	 Hence, we conclude that \(S_n(x) = y\) which shows that \(\|S_n(x)\|_{\infty,n} \le 1\). So \(\|S\|_{cob} \le 1\). 
\end{proof}
\begin{thm}
	Let \(\mathcal{N}\) be injective, \( 1 \le q \le \infty \) and let \( T: \mathcal{M} \to L_q(\mathcal{N}) \) be completely order bounded. Then there exist linear maps \(T_i: \mathcal{M} \rightarrow L_q(\mathcal{N})\) such that the map 
	\begin{equation*}
	\begin{split}
	\Phi: &\mathcal{M} \otimes M_2 \to L_q(\mathcal{N}) \otimes M_2 \\
	&\begin{bmatrix} 
	x_{11} & x_{12} \\ 	x_{21} & x_{22}  
	\end{bmatrix} \mapsto
	\begin{bmatrix} 
	T_1(x_{11}) & T(x_{12}) \\ 
	T(x_{21}^*)^* & T_2(x_{22})  
	\end{bmatrix}
	\end{split}
	\end{equation*}
	is complete positive and \(\|T_1\|, \|T_2\| \le \|T\|_{cob} \). 
	Thus \(T\) is decomposable and \(\|T\|_{dec} = \|T\|_{cob}\).
\end{thm}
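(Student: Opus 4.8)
The plan is to reduce the statement to the classical decomposition theorem for completely bounded maps into an injective von Neumann algebra by means of the factorization from Theorem \ref{mapFactoredThroughLInfinity}, and then to transport the resulting decomposition back up to \(L_q(\mathcal{N})\) by a congruence. First I would apply Theorem \ref{mapFactoredThroughLInfinity} to obtain \(f, g \in L_q(\mathcal{N})_+\) with \(\|f\|_q, \|g\|_q \le \|T\|_{cob}\) and a completely order bounded map \(S : \mathcal{M} \to \mathcal{N}\) with \(\|S\|_{cob} \le 1\) such that \(T(x) = f^{\frac{1}{2}} S(x) g^{\frac{1}{2}}\) for all \(x \in \mathcal{M}\). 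As remarked after that theorem, for a map between von Neumann algebras completely order bounded coincides with completely bounded, so \(\|S\|_{cb} \le 1\). Since \(\mathcal{N}\) is injective, the Haagerup--Paulsen--Wittstock theorem (\cite{Ha1}, \cite{Pa}, \cite{Wi}) applies: \(S\) is decomposable with \(\|S\|_{dec} = \|S\|_{cb} \le 1\), the infimum defining \(\|\cdot\|_{dec}\) being attained by a weak-\(*\) compactness argument of the same type as in the proof of Theorem \ref{infIsMinInNorm}. Hence there are completely positive \(S_1, S_2 : \mathcal{M} \to \mathcal{N}\) with \(\|S_1\|, \|S_2\| \le 1\) for which the map \(\Psi : \mathcal{M} \otimes M_2 \to \mathcal{N} \otimes M_2\), \([x_{ij}] \mapsto \left[\begin{smallmatrix} S_1(x_{11}) & S(x_{12}) \\ S(x_{21}^*)^* & S_2(x_{22}) \end{smallmatrix}\right]\), is completely positive.

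Next I would set \(T_1(x) = f^{\frac{1}{2}} S_1(x) f^{\frac{1}{2}}\) and \(T_2(x) = g^{\frac{1}{2}} S_2(x) g^{\frac{1}{2}}\), and identify the target map \(\Phi\) as the congruence of \(\Psi\) by \(D = \mathrm{diag}(f^{\frac{1}{2}}, g^{\frac{1}{2}})\), that is \(\Phi(y) = D\,\Psi(y)\,D\). The diagonal entries give \(T_1\) and \(T_2\) by definition; the \((1,2)\)-entry equals \(f^{\frac{1}{2}} S(x_{12}) g^{\frac{1}{2}} = T(x_{12})\); and the \((2,1)\)-entry equals \(g^{\frac{1}{2}} S(x_{21}^*)^* f^{\frac{1}{2}} = \big(f^{\frac{1}{2}} S(x_{21}^*) g^{\frac{1}{2}}\big)^* = T(x_{21}^*)^*\), exactly as required.

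Complete positivity of \(\Phi\) then follows because \(\Phi = M_D \circ \Psi\), where \(M_D\) denotes congruence by \(D\). Checking on elementary tensors shows that \(M_D \otimes \mathrm{Id}_{M_n}\) is congruence by \(D \otimes 1_n\), which preserves positivity of \(\tau\otimes Tr\)-measurable operators; since \(\Psi\) is completely positive, each amplification \(\Phi_n = (M_D)_n \circ \Psi_n\) is positive, so \(\Phi\) is completely positive. For the norm estimates, \(T_1 = M_{f^{\frac{1}{2}}} \circ S_1\) and \(T_2 = M_{g^{\frac{1}{2}}} \circ S_2\) are completely positive as compositions of completely positive maps. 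For \(x \in \mathcal{M}_+\) with \(\|x\|_\infty \le 1\) we have \(0 \le S_1(x) \le \|S_1\|\,1 \le 1\), hence \(0 \le T_1(x) \le f\), and by monotonicity of the \(L_q\)-norm on positive elements \(\|T_1(x)\|_q \le \|f\|_q \le \|T\|_{cob}\); as \(T_1\) is completely positive its operator norm is attained on the positive part of the unit ball, so \(\|T_1\| \le \|T\|_{cob}\), and likewise \(\|T_2\| \le \|T\|_{cob}\).

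Finally, the decomposition just constructed shows \(T\) is decomposable, so \(\|T\|_{dec} \le \max\{\|T_1\|, \|T_2\|\} \le \|T\|_{cob}\); combined with the already-noted reverse inequality \(\|T\|_{cob} \le \|T\|_{dec}\) this gives \(\|T\|_{dec} = \|T\|_{cob}\). The step I expect to be the main obstacle is the norm bookkeeping, specifically guaranteeing that the decomposition of \(S\) can be chosen with \(\|S_i\| \le 1\) rather than merely \(\le 1 + \varepsilon\); this is what forces reliance on the \emph{attainment} of the decomposable norm for maps into an injective algebra rather than on a mere approximation, since the sharp bounds \(\|T_i\| \le \|T\|_{cob}\) are needed to reach the exact equality of norms.
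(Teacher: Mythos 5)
Your proposal is correct and follows essentially the same route as the paper: factor \(T(x) = f^{\frac{1}{2}} S(x) g^{\frac{1}{2}}\) via Theorem \ref{mapFactoredThroughLInfinity}, decompose the completely bounded map \(S\) using \cite{Wi}, Theorem 4.5 (which already yields the exact bounds \(\|S_i\| \le 1\), so the attainment issue you flag as the main obstacle does not actually arise; the paper's only added care is the remark that Wittstock's self-adjoint version extends to non-self-adjoint maps via \cite{Ha2}, Proposition 1.3), and conjugate back by \(\mathrm{diag}(f^{\frac{1}{2}}, g^{\frac{1}{2}})\). Your explicit verifications of the congruence identity, its complete positivity, and the norm estimates for \(T_1, T_2\) are details the paper leaves implicit, and they are sound.
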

\begin{proof}
	By Theorem \ref{mapFactoredThroughLInfinity}, there exist \(f, g \in L_q(\mathcal{N})_+,  \|f\|_q, \|g\|_q \le \|T\|_{cob}\) and \(S: \mathcal{M} \rightarrow \mathcal{N}, \|S\|_{cob} \le 1\) such that
	\begin{equation*}
	T(x) = f^{\frac{1}{2}}S(x)g^{\frac{1}{2}}, \quad x \in \mathcal{M}.
	\end{equation*}
	Since for a linear map from \(\mathcal{M}\) to \(\mathcal{N}\) completely bounded is the same as completely order bounded, we can apply \cite{Wi}, Theorem 4.5, and get linear maps \(S_1, S_2: \mathcal{M} \rightarrow \mathcal{N}\) such that \(\|S_i\| \le 1, i = 1,2\), and the map
	\begin{equation*}
	\left[ \begin{array}{cc} 
	x_{11} & x_{12} \\ 
	x_{21} & x_{22}  
	\end{array} \right] \mapsto
	\left[ \begin{array}{cc} 
	S_1(x_{11}) & S(x_{12}) \\ 
	S(x_{21}^*)^* & S_2(x_{22})  
	\end{array} \right]
	\end{equation*}
	is completely positive. Then the linear maps \(T_i: \mathcal{M} \rightarrow L_q(\mathcal{N})\) where \(T_1(x) = f^{\frac{1}{2}}S_1(x)f^{\frac{1}{2}}, T_2(x) = g^{\frac{1}{2}}S_2(x)g^{\frac{1}{2}}, x \in \mathcal{M},\) have the desired properties.
\end{proof}
\begin{rem}
	Theorem 4.5 in \cite{Wi} states the decomposition for self-adjoint maps. The proof of Proposition 1.3 in \cite{Ha2} shows that this is also true for maps which are not self-adjoint.
\end{rem}
The next goal is to prove that completely order bounded maps from \(L_p(\mathcal{M})\) to \(L_1(\mathcal{N}).\) are decomposable. This proof is divided into several steps.

\begin{lem}\label{selfadjointIsTensorOfSelfadjoints}
	Let \( k \in \mathbb{N}\) and \(a \in L_p(\mathcal{M}) \otimes M_2 \otimes M_{2k}\) be self- adjoint. Then \(a\) can be written in the form
	\begin{equation*}
	a = \sum_{i=1}^{n} a_i \otimes \alpha_i
	\end{equation*}
	where all \(a_i \in L_p(\mathcal{M}) \otimes M_2\) and all \(\alpha_i  \in M_{2k}\) are self-adjoint.
\end{lem}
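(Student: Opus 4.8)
The plan is to reduce the statement to the elementary observation that \( M_{2k} \) possesses a linear basis consisting of self-adjoint matrices, and then to transport the self-adjointness of \( a \) to the coefficients by invoking uniqueness of the expansion in that basis.

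First I would fix a basis \( \{\alpha_1, \dots, \alpha_n\} \) of \( M_{2k} \), with \( n = (2k)^2 \), consisting entirely of self-adjoint matrices: for instance the diagonal units \( \varepsilon_{jj} \), the symmetric combinations \( \varepsilon_{jl} + \varepsilon_{lj} \) for \( j < l \), and the combinations \( i(\varepsilon_{jl} - \varepsilon_{lj}) \) for \( j < l \), where \( \varepsilon_{jl} \) are the matrix units of \( M_{2k} \) as in \eqref{definitionEpsilonij}. Each of these is visibly self-adjoint, and they span \( M_{2k} \) over \( \mathbb{C} \) because \( \varepsilon_{jl} = \tfrac{1}{2}\left[(\varepsilon_{jl} + \varepsilon_{lj}) - i\cdot i(\varepsilon_{jl} - \varepsilon_{lj})\right] \), so that every matrix unit, and hence every matrix, is a complex linear combination of basis elements.

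Since \( M_{2k} \) is finite dimensional, every element of \( \left(L_p(\mathcal{M}) \otimes M_2\right) \otimes M_{2k} \) has a \emph{unique} representation \( \sum_{i=1}^n a_i \otimes \alpha_i \) with coefficients \( a_i \in L_p(\mathcal{M}) \otimes M_2 \); this is the standard fact that a basis of one tensor factor furnishes unique coordinates taking values in the other factor, valid even when the other factor is infinite dimensional. Applying this to our \( a \) produces the desired \( a_i \), and it remains only to check that they are self-adjoint.

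For the last point I would take adjoints. The involution on the triple tensor product factors as \( (a_i \otimes \alpha_i)^* = a_i^* \otimes \alpha_i^* \) at the level of \( \tau \otimes Tr \otimes Tr \)-measurable operators, since the involution is the restriction of the operator adjoint and \( M_{2k} \) acts as a finite-dimensional \( * \)-subalgebra on the tensor factor \( \mathbb{C}^{2k} \). Because each \( \alpha_i \) is self-adjoint, this gives \( a^* = \sum_{i=1}^n a_i^* \otimes \alpha_i \). The hypothesis \( a = a^* \) then yields \( \sum_{i=1}^n a_i \otimes \alpha_i = \sum_{i=1}^n a_i^* \otimes \alpha_i \), and uniqueness of the coordinates in the basis \( \{\alpha_i\} \) forces \( a_i = a_i^* \) for every \( i \). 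Thus all \( a_i \) are self-adjoint, completing the argument. I do not anticipate a genuine obstacle here; the only point demanding a little care is the verification that the involution factors over the tensor product as stated, which is what legitimizes comparing coefficients after taking adjoints.
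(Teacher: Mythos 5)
Your proof is correct, but it takes a genuinely different and more elementary route than the paper's. The paper proceeds through its order-theoretic machinery: it first majorizes \(a\) via Lemma \ref{fGreaterX}, applies Theorem \ref{xAsBoundedOp} to factor \(a = (f^{\frac{1}{2}} \otimes 1_2 \otimes 1_{2k})\,y\,(f^{\frac{1}{2}} \otimes 1_2 \otimes 1_{2k})\) with \(y\) a \emph{bounded} self-adjoint element of \(\mathcal{M} \otimes M_2 \otimes M_{2k}\), invokes Takesaki's Lemma IV.4.4 to write \(y = \sum_i y_i \otimes \alpha_i\) with all factors self-adjoint, and finally conjugates each \(y_i\) by \(f^{\frac{1}{2}} \otimes 1_2\), which preserves self-adjointness. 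You bypass all of this: you fix the standard self-adjoint basis of \(M_{2k}\) (your dimension count \(2k + 2\cdot\binom{2k}{2} = (2k)^2\) and the spanning identity both check out) and compare the unique coordinates of \(a\) and \(a^*\). This is legitimate in the paper's setting because \(L_p(\mathcal{M}) \otimes M_2 \otimes M_{2k}\) is exactly the algebraic tensor product (matrices over \(L_p(\mathcal{M})\)), and the involution on the \(\tau \otimes Tr\)-measurable operators is additive and acts entrywise on matrices, so \((b \otimes \beta)^* = b^* \otimes \beta^*\) — the one point you rightly flag as needing care, and a fact the paper itself uses implicitly throughout (e.g.\ whenever it writes \(x^*\) for \(x \in L_p(\mathcal{M}) \otimes M_n\)). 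Your argument even delivers slightly more than the lemma asserts: a fixed number \(n = (2k)^2\) of terms and uniqueness of the coefficients, with no dependence on Lemma \ref{fGreaterX} or Theorem \ref{xAsBoundedOp}; what the paper's factorization route buys in exchange is that Takesaki's lemma can be quoted verbatim for bounded operators, avoiding any direct manipulation of adjoints of unbounded operator matrices.
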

\begin{proof}
	By Lemma \ref{fGreaterThanX}, there exists \(f \in L_p(\mathcal{M})_+\) such that 
	\begin{equation*}
		\left[ \begin{array}{cc} 
		f \otimes 1_2 \otimes 1_{2k} & a \\ 
		a^* & f \otimes 1_2 \otimes 1_{2k}  
		\end{array} \right] \ge 0.
	\end{equation*}
	By Lemma \ref{xAsBoundedOp}, there is \(y \in \mathcal{M} \otimes M_2 \otimes M_{2k}\) such that \(y\) is self-adjoint and \(a = (f^{\frac{1}{2}} \otimes 1_2 \otimes 1_{2k})y(f^{\frac{1}{2}} \otimes 1_2 \otimes 1_{2k})\). By \cite{Ta}, Lemma IV.4.4, \(y\) can be written in the form
	\begin{equation*}
		y = \sum_{i=1}^{n} y_i \otimes \alpha_i,
	\end{equation*}
	where all \(y_i \in L_p(\mathcal{M}) \otimes M_2\) and all \(\alpha_i \in M_{2k}\) are self-adjoint. Then 
	\begin{equation*}
	a = \sum_{i=1}^{n} (f^{\frac{1}{2}} \otimes 1_2 )y_i(f^{\frac{1}{2}} \otimes 1_2 ) \otimes \alpha_i
	\end{equation*}
	gives the desired decomposition.
\end{proof}
Let \(Tr\) be the usual trace on \(M_{2k}\). We define the duality between \(L_1(M_k) \otimes M_2\) and \(M_{2k}\) by \(<a,b> = Tr(ab), a \in L_1(M_k) \otimes M_2, b \in M_{2k}.\) If \(n \in \mathbb{N}, a = [a_{ij}], b = [b_{ij}],\) where \(a_{ij} \in L_1(M_k) \otimes M_2\) and \( b_{ij} \in M_k \otimes M_2 \) then \(<[a_{ij}],[b_{ij}]> = \sum_{i,j=1}^{2k} <a_{ij}, b_{ji}>.\)
We define the linear functional
\begin{equation}\label{defi
	neOmega}
	\begin{split}
	\omega: &L_1(M_k) \otimes M_2 \otimes M_{2k} \rightarrow \mathbb{C} \\
	&\sum_{i=1}^{n} a_i \otimes b_i \mapsto \sum_{i=1}^{n} <a_i,b_i^t> 
	\end{split}
\end{equation}
where \(b_i^t\) denotes the transposed matrix of \(b_i.\)
\begin{lem}\label{omegaIsPositive} Let \(a \in (L_1(M_k) \otimes M_2 \otimes M_{2k})_+.\) Then
	\(\omega(a) \ge 0\).
\end{lem}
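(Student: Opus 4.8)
The plan is to realise $\omega$ as a vector state under the canonical identifications and then read off positivity directly. Write $N = 2k$ and use the vector-space isomorphisms $L_1(M_k) \otimes M_2 \cong M_k \otimes M_2 \cong M_N$, under which $L_1(M_k) \otimes M_2 \otimes M_{2k} \cong M_N \otimes M_N = \mathcal{B}(\mathbb{C}^N \otimes \mathbb{C}^N)$ and the pairing becomes $\langle a_i, b_i^t\rangle = Tr(a_i b_i^t)$ for $a_i, b_i \in M_N$. Because $M_k$, $M_2$, and $M_{2k}$ are finite-dimensional, the $L_1$-order coincides with operator positivity, so under this identification a positive element $a$ is precisely a positive semidefinite operator on $\mathbb{C}^N \otimes \mathbb{C}^N$, and $\omega$ is the linear functional determined on elementary tensors by $\omega(A \otimes B) = Tr(A B^t)$.

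The key step is to introduce the (unnormalised) maximally entangled vector $\Omega = \sum_{m=1}^{N} e_m \otimes e_m \in \mathbb{C}^N \otimes \mathbb{C}^N$, where $(e_m)_{m=1}^N$ is the standard orthonormal basis of $\mathbb{C}^N$, and to prove the identity $\omega(a) = (a\Omega | \Omega)$ for all $a$. Since both sides are linear in $a$, it suffices to check this on elementary tensors $A \otimes B$ with $A, B \in M_N$. A direct computation with matrix entries gives
\begin{equation*}
  ((A \otimes B)\Omega | \Omega) = \sum_{m,n=1}^{N} (Ae_m | e_n)(Be_m | e_n) = \sum_{m,n=1}^{N} A_{nm} B_{nm} = Tr(A B^t) = \omega(A \otimes B),
\end{equation*}
which establishes the identity.

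With the identity in hand the conclusion is immediate: if $a \in (L_1(M_k) \otimes M_2 \otimes M_{2k})_+$, then, viewed as an operator on $\mathbb{C}^N \otimes \mathbb{C}^N$, $a$ is positive semidefinite, so $\omega(a) = (a\Omega | \Omega) \ge 0$.

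The only real content is spotting that pairing the $M_{2k}$-leg against its transpose coincides with evaluating in the maximally entangled state $\Omega$; this ``transpose trick'' is the heart of the argument, and once it is seen there is no further obstacle, the remaining computation being a routine matching of entries. An alternative route through Lemma \ref{xFiniteLinearCombination}, decomposing $a$ into pieces $[x_i^* x_j]$, is available but messier, since that lemma only yields a \emph{linear} combination and does not directly expose positivity of $\omega$. The one thing to handle carefully in the approach above is the bookkeeping of the identifications, namely that it is the leg carrying $b_i$ whose transpose appears and that the trace in the pairing is the trace of $M_N$; since everything is finite-dimensional, no analytic subtleties arise.
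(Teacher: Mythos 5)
Your argument is correct, and it takes a genuinely different route from the paper's. The paper proves the lemma by factoring through the square root: it writes \(a^{\frac{1}{2}} = \sum_i a_i \otimes \alpha_i\), so that \(a = \sum_{i,j} a_i^* a_j \otimes \alpha_i^* \alpha_j\), identifies \(\omega(a)\) with the duality pairing \(\left\langle [a_i^* a_j], [\alpha_i^t \alpha_j^{*t}] \right\rangle\) of two positive matrices, and then invokes the (i) \(\Rightarrow\) (ii) direction of Proposition \ref{operatorIsPositive}. You instead exploit the finite-dimensionality of all three tensor legs to realise \(\omega\) as the vector state \(a \mapsto (a\Omega \,|\, \Omega)\) at the maximally entangled vector; your entrywise check \(((A \otimes B)\Omega \,|\, \Omega) = \sum_{m,n} A_{nm}B_{nm} = Tr(AB^t)\) is right (no conjugations intrude because the \(e_n\) are the standard real basis vectors), and the identification of \((L_1(M_k)\otimes M_2 \otimes M_{2k})_+\) with the positive semidefinite cone in \(\mathcal{B}(\mathbb{C}^N \otimes \mathbb{C}^N)\) is legitimate, since for finite-dimensional algebras the Haagerup \(L_1\)-order coincides with the operator order. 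What your approach buys: it is shorter, self-contained (no square-root decomposition, no appeal to Proposition \ref{operatorIsPositive}), and proves strictly more, since \(\omega\) is exhibited as a vector state, hence a positive functional with an explicit implementing vector. What the paper's approach buys: it runs entirely inside the duality machinery of Section 2, and the same square-root-plus-Proposition-\ref{operatorIsPositive} pattern is reused essentially verbatim in Lemma \ref{xijTensoryjiPositive}, where one tensor leg is \(L_p(\mathcal{M})\) for an arbitrary von Neumann algebra \(\mathcal{M}\) and the entangled-vector trick is unavailable; so the paper's method is the one that scales to the setting it needs immediately afterwards. One small correction to your closing remark: the paper's actual proof is not the route through Lemma \ref{xFiniteLinearCombination} that you dismiss as messy --- it avoids the linear-combination issue precisely by decomposing \(a^{\frac{1}{2}}\) rather than \(a\), which produces a single Gram-type pairing of two positive matrices instead of a linear combination of unsigned terms.
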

\begin{proof}
	Let \( a^\frac{1}{2} = \sum_{i=1}^{n} a_i \otimes \alpha_i.\)
	Then \(a^\frac{1}{2}\) is self-adjoint, \(a = \sum_{i,j=1}^{n} a_i^*a_j \otimes \alpha_i^* \alpha_j,\)
	and 
	\begin{equation*}
	\begin{split}
		\omega(a) &= \sum_{i,j=1}^{n} \omega(a_i^*a_j \otimes \alpha_i^* \alpha_j) = \sum_{i,j=1}^{n} \left<a_i^*a_j,\alpha_j^t\alpha_i^{*t}\right> \\
		&= \left<\left[a_i^*a_j\right], \left[\alpha_i^t \alpha_j^{*t} \right]\right>
	\end{split}
	\end{equation*}
	Since \(\left[a_i^*a_j\right]\) and \(\left[\alpha_i^t \alpha_j^{*t} \right]\) are positive matrices, the last expression is positive by Proposition \ref{operatorIsPositive}.
\end{proof}
For a linear map \(T: L_p(\mathcal{M}) \rightarrow L_1(M_k)\) we define 
\begin{equation}\begin{split}\label{defineTtilde}
	\tilde{T}: &L_p(\mathcal{M}) \otimes M_2 \rightarrow L_1(M_k) \otimes M_2\\
	& \begin{bmatrix} 
	x_{11} & x_{12} \\ x_{21} & x_{22} 
	\end{bmatrix} \mapsto
	\begin{bmatrix} 
	0 & T(x_{12}) \\ T(x_{21}^*)^* & 0 
	\end{bmatrix}.
\end{split}\end{equation}
Then \(\tilde{T}\) is a self-adjoint linear map. We define the linear functional
\begin{equation}\label{definePhiT}
\begin{split}
	\varphi_T: &L_p(\mathcal{M}) \otimes M_2 \otimes M_{2k} \rightarrow \mathbb{C}\\
	& x \mapsto \omega \circ \tilde{T}_{2k}(x).
\end{split}
\end{equation}
\begin{lem}\label{tCobDominatesPhiAMinusB}
	Let \(T: L_p(\mathcal{M}) \rightarrow L_1(M_k)\) be completely order bounded and \(\varphi_T\) as in \eqref{definePhiT}. Let \(a, b \in (L_p(\mathcal{M}) \otimes M_2 \otimes M_{2k})_+, f, g \in L_p(\mathcal{M})_+\) such that
	\begin{equation}\label{aPlusbLessThanf}
		0 \le a + b \le f \otimes \varepsilon_{11} \otimes 1_{2k} + g \otimes \varepsilon_{22} \otimes 1_{2k}.
	\end{equation}
	Then
	\begin{equation*}
		\|T\|_{cob}( \|f\|_p + \|g\|_p ) + \varphi_T ( a - b) \ge 0.
	\end{equation*}
\end{lem}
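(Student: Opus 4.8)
The plan is to use three ingredients: that $\tilde T$ sees only the off-diagonal $M_2$-entries, that $\omega$ is positive (Lemma~\ref{omegaIsPositive}), and that the completely order bounded norm together with the minimax statement of Theorem~\ref{infIsMinInNorm} produces a diagonal majorant of small trace. Throughout I write elements of $L_p(\mathcal M)\otimes M_2\otimes M_{2k}$ and of $L_1(M_k)\otimes M_2\otimes M_{2k}$ as $2\times 2$ matrices over the $M_2$-factor, with entries in $L_p(\mathcal M)\otimes M_{2k}$ respectively $L_1(M_k)\otimes M_{2k}$. Put $c=a-b$ and $x=c_{12}=a_{12}-b_{12}$. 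Since $a,b\ge 0$, $c$ is self-adjoint, so $c_{21}=x^*$, and the $2k$-amplification of \eqref{defineTtilde} gives
\[
\tilde T_{2k}(c)=\begin{bmatrix} 0 & T_{2k}(x)\\ T_{2k}(x)^* & 0\end{bmatrix},
\]
so that, by \eqref{definePhiT}, $\varphi_T(a-b)=\omega\big(\tilde T_{2k}(c)\big)$ is governed entirely by $y:=T_{2k}(x)\in L_1(M_k)\otimes M_{2k}$.

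First I would pin down the order relation satisfied by $x$. From $a\ge 0$ we have $\left[\begin{smallmatrix}a_{11}&a_{12}\\ a_{12}^*&a_{22}\end{smallmatrix}\right]\ge 0$, and conjugating the positivity of $b$ by $\mathrm{diag}(1,-1)$ gives $\left[\begin{smallmatrix}b_{11}&-b_{12}\\ -b_{12}^*&b_{22}\end{smallmatrix}\right]\ge 0$. Adding these two, and compressing \eqref{aPlusbLessThanf} to the two diagonal $M_2$-corners to get $a_{11}+b_{11}\le f\otimes 1_{2k}$ and $a_{22}+b_{22}\le g\otimes 1_{2k}$, I obtain
\[
\begin{bmatrix} f\otimes 1_{2k} & x\\ x^* & g\otimes 1_{2k}\end{bmatrix}\ge 0 .
\]
By the definition of $\|\cdot\|_{p,2k}$ this yields $\|x\|_{p,2k}\le\tfrac12(\|f\|_p+\|g\|_p)$, and hence
\[
\|y\|_{1,2k}=\|T_{2k}(x)\|_{1,2k}\le\|T\|_{cob}\,\|x\|_{p,2k}\le\tfrac12\|T\|_{cob}\big(\|f\|_p+\|g\|_p\big).
\]

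Next I would build the majorant. By Theorem~\ref{infIsMinInNorm} applied to $y$ there exist $w_1,w_2\in L_1(M_k)_+$ with $\|w_1\|_1=\|w_2\|_1=\|y\|_{1,2k}$ and $\left[\begin{smallmatrix}w_1\otimes 1_{2k}&y\\ y^*&w_2\otimes 1_{2k}\end{smallmatrix}\right]\ge 0$. Set $w=w_1\otimes\varepsilon_{11}\otimes 1_{2k}+w_2\otimes\varepsilon_{22}\otimes 1_{2k}\ge 0$. Then this last block matrix is precisely $w+\tilde T_{2k}(c)$, so $w+\tilde T_{2k}(c)\ge 0$; moreover the definition of $\omega$ (using $1_{2k}^t=1_{2k}$ and $Tr(w_i\otimes\varepsilon_{ii})=\|w_i\|_1$) gives $\omega(w)=\|w_1\|_1+\|w_2\|_1=2\|y\|_{1,2k}\le\|T\|_{cob}(\|f\|_p+\|g\|_p)$. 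Now Lemma~\ref{omegaIsPositive} applies to $w+\tilde T_{2k}(c)$, so
\[
0\le\omega\big(w+\tilde T_{2k}(c)\big)=\omega(w)+\varphi_T(a-b),
\]
whence $\varphi_T(a-b)\ge-\omega(w)\ge-\|T\|_{cob}(\|f\|_p+\|g\|_p)$, as desired.

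The step that needs the most care, and where the right constant is obtained, is the construction of $w$: since the statement asks only for a \emph{lower} bound on $\varphi_T(a-b)$, it suffices to dominate $\tilde T_{2k}(c)$ from below by $-w$ (the one-sided relation $w+\tilde T_{2k}(c)\ge 0$), rather than by $\pm w$. Combined with the equalities $\|w_1\|_1=\|w_2\|_1=\|y\|_{1,2k}$ furnished by Theorem~\ref{infIsMinInNorm}, this keeps $\omega(w)=2\|y\|_{1,2k}$ inside $\|T\|_{cob}(\|f\|_p+\|g\|_p)$; symmetrizing $f$ and $g$ to a common diagonal and invoking Theorem~\ref{orderIntervalToOrderInterval} instead would lose a factor here. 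The only other point to verify carefully is the diagonal-corner compression giving $a_{ii}+b_{ii}\le(\,\cdot\,)\otimes 1_{2k}$, which uses nothing beyond $0\le a+b$ being dominated by an $M_2$-diagonal element.
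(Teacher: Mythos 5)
Your proof is correct, and its skeleton is the same as the paper's: bound the off-diagonal $M_2$-block of $a-b$ in $\|\cdot\|_{p,2k}$ by $\tfrac12(\|f\|_p+\|g\|_p)$, push it through $\|T\|_{cob}$, use Theorem \ref{infIsMinInNorm} to produce a diagonal majorant of minimal $L_1$-norm, and conclude by applying $\omega$ to a single positive matrix (Lemma \ref{omegaIsPositive}) --- your $(w_1,w_2)$ are exactly the paper's $(f_1,g_1)$ in \eqref{TaiTensorAlphi}, and you rightly note that only the one-sided relation $w+\tilde T_{2k}(a-b)\ge 0$ is needed, which is also all the paper uses. Where you genuinely diverge is in how the intermediate inequality $\|(a-b)_{12}\|_{p,2k}\le\tfrac12(\|f\|_p+\|g\|_p)$ is obtained: the paper first decomposes $a-b$ into a sum of self-adjoint elementary tensors via Lemma \ref{selfadjointIsTensorOfSelfadjoints} (which itself rests on Theorem \ref{xAsBoundedOp} and a result of Takesaki), passes through Lemma \ref{orderSelfAdjoint}, and extracts the corner by the explicit $\gamma$-compression in \eqref{extractai}; you instead add $a\ge 0$ to the $\mathrm{diag}(1,-1)$-conjugate of $b\ge 0$ and compress \eqref{aPlusbLessThanf} to the diagonal corners, getting $\left[\begin{smallmatrix} f\otimes 1_{2k} & x\\ x^* & g\otimes 1_{2k}\end{smallmatrix}\right]\ge 0$ directly, with $x=(a-b)_{12}$. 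This is a real streamlining: it removes one auxiliary lemma and the compression computation, and it also makes transparent that $\varphi_T(a-b)=\omega(\tilde T_{2k}(a-b))$ depends only on the block $x$, which in the paper is somewhat buried in the termwise computation over the decomposition $\sum_i a_i\otimes\alpha_i$. The only points to keep explicit in a final write-up are the standard shuffle identification $L_1(M_k)\otimes M_{4k}\cong L_1(M_k)\otimes M_2\otimes M_{2k}$ under which the matrix from Theorem \ref{infIsMinInNorm} becomes $w+\tilde T_{2k}(a-b)$ (the paper makes the same silent identification when it applies $\omega$ to \eqref{TaiTensorAlphi}), and the homogeneity of $\|\cdot\|_{p,n}$ used to pass from the unit-ball definition of $\|T\|_{cob}$ to $\|T_{2k}(x)\|_{1,2k}\le\|T\|_{cob}\|x\|_{p,2k}$.
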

Here \(\varepsilon_{ij}, i,j = 1,2\) are the \(2 \times 2\) matrices defined in \eqref{definitionEpsilonij}.
\begin{proof}
	Since \(a-b\) is self-adjoint, by Lemma \ref{selfadjointIsTensorOfSelfadjoints}, we can write
	\begin{equation*}
		a-b = \sum_{i=1}^{n} a_i \otimes \alpha_i
	\end{equation*}
	where \(a_i \in L_p(\mathcal{M}) \otimes M_2\) and \(\alpha_i,  \in M_{2k}\) are all self-adjoint. For \(i \in \{1, \cdots,n\}\)  let 
	\begin{equation*}
		a_i = \begin{bmatrix} 
		a_{i,11} & a_{i,12} \\ a_{i,21} & a_{i,22}   
		\end{bmatrix},
		\alpha_i = \begin{bmatrix} 
		\alpha_{i,11} & \alpha_{i,12} \\ \alpha_{i,21} & \alpha_{i,22}   
		\end{bmatrix},
	\end{equation*}
	where \(a_{i,st} \in L_p(\mathcal{M})\) and \(\alpha_{i,st} \in M_k\) for \(s,t = 1,2\).
	Since each \(a_i\) is self-adjoint, we have \(a_{i,21}^* = a_{i,12}\), \(\alpha_{i,21}^* = \alpha_{i,12}\), and \(<T(a_{i,21}^*)^*,\alpha_{i,21}^t> = \linebreak<T(a_{i,12})^*,\alpha_{i,12}^{*t}> = \overline{<T(a_{i,12}),\alpha_{i,12}^t>}\) for all \(i \in \{1, \cdots,n\}\). Therefore
	\begin{equation*}
	\begin{split}
		\varphi_T(a-b) &= \sum_{i=1}^{n} \varphi_T( a_i \otimes \alpha_i) \\
		&= \sum_{i=1}^{n} \left< \begin{bmatrix}
		0 & T(a_{i,12}) \\ T(a_{i,21}^*)^* & 0
		\end{bmatrix}, \begin{bmatrix}
		\alpha_{i,11}^t & \alpha_{i,21}^t \\
		\alpha_{i,12}^t & \alpha_{i,22}^t
		\end{bmatrix} \right> \\
		&= \sum_{i=1}^{n} \left(<T(a_{i,12}),\alpha_{i,12}^t> + <T(a_{i,12})^*, \alpha_{i,12}^{*t}> \right).
	\end{split}
	\end{equation*}
	By Proposition \ref{orderSelfAdjoint}, we have 
	\begin{equation}\label{a minus b majorized by f and g)}
		\begin{bmatrix}
		(f \otimes \varepsilon_{11} + g \otimes \varepsilon_{22}) \otimes 1_{2k} & a-b \\
		a-b & (f \otimes \varepsilon_{11} + g \otimes \varepsilon_{22}) \otimes 1_{2k}
	\end{bmatrix} \ge 0.
	\end{equation}
	We multiply the matrix in \eqref{a minus b majorized by f and g)} from left with the matrix \(\gamma\) and from right with the transposed matrix \(\gamma^t\), where \(\gamma\) is the \(8k \times 4k\) matrix \(\left[\begin{smallmatrix}
		\varepsilon_{11} \otimes 1_{2k} \\ \varepsilon_{22} \otimes 1_{2k}
	\end{smallmatrix}\right]\) and get
	\begin{equation}\label{extractai}
	\begin{split}
		0 &\le \varepsilon_{11}(f \otimes \varepsilon_{11} + g \otimes \varepsilon_{22}) \varepsilon_{11} \otimes 1_{2k} + (\varepsilon_{11} \otimes1_{2k}(a-b)(\varepsilon_{22} \otimes1_{2k}) \\
		&\quad + (\varepsilon_{22} \otimes1_{2k})(a-b)(\varepsilon_{11} \otimes1_{2k}) + \varepsilon_{22}(f \otimes \varepsilon_{11} + g \otimes \varepsilon_{22}) \varepsilon_{22} \otimes 1_{2k} \\
		&= f \otimes \varepsilon_{11} \otimes 1_{2k} + \sum_{i=1}^{n} a_{i,12} \otimes \varepsilon_{12} \otimes \alpha_i \\
		&\quad +\sum_{i=1}^{n} a_{i,21} \otimes \varepsilon_{21} \otimes \alpha_i + g \otimes \varepsilon_{22} \otimes 1_{2k}.
	\end{split}
	\end{equation}
	This inequality can be written as \(\|\sum_{i=1}^{n} a_{i,12} \otimes \alpha_i\|_{p,2k} \le \frac{1}{2}(\|f\|_p + \|g\|_p). \)
	Hence \(\|T(\sum_{i=1}^{n} a_{i,12} \otimes \alpha_i)\|_{1,2k} \le \frac{1}{2} \|T\|_{cob}(\|f\|_p + \|g\|_p)\). By Theorem \ref{infIsMinInNorm}, there exist \(f_1, g_1 \in L_1(M_k)_+\) such that \( \|f_1\|_1 = \|g_1\|_1 \le  \frac{1}{2} \|T\|_{cob}(\|f\|_p + \|g\|_p)\) and 
	\begin{equation}\label{TaiTensorAlphi}
		\begin{bmatrix}
			f_1 \otimes 1_{2k} & \sum_{i=1}^{n} T(a_{i,12}) \otimes \alpha_i \\
			T(a_{i,12})^* \otimes \alpha_i & g_1 \otimes 1_{2k}
		\end{bmatrix} \ge 0.
	\end{equation}
	We apply \(\omega\) to \eqref{TaiTensorAlphi} and get
	\begin{equation*}
	\begin{split}
		0 &\le <f_1 \otimes \varepsilon_{11}, 1_{2k}> + <g_1 \otimes \varepsilon_{22}, 1_{2k}> \\
		&\quad + \sum_{i=1}^{n} \left(<T(a_{i,12}) \otimes \varepsilon_{12}, \alpha_i^t> + <T(a_{i,12})^* \otimes \varepsilon_{21},\alpha_i^t>\right) \\
		&= \|f_1\|_1 + \|g_1\|_1 + \sum_{i=1}^{n} (<T(a_{i,12}), \alpha_{i,12}^t> + <T(a_{i,12})^*, \alpha_{i,12}^{*t}>) \\
		&= \|f_1\|_1 + \|g_1\|_1 + \varphi_T(a-b).
	\end{split}
	\end{equation*}
	Since \(\|f_1\|_1 + \|g_1\|_1 \le \|T\|_{cob}(\|f\|_p + \|g\|_p),\) the proof is finished.
	\end{proof}
\begin{lem}\label{xijTensoryjiPositive}
	Let \(x \in (L_p(\mathcal{M}) \otimes M_2 \otimes  M_n)_+\) and \(y  \in (M_k \otimes M_2 \otimes M_n)_+\) where \(x= [x_{ij}]\) with \(x_{ij} \in L_p(\mathcal{M} \otimes M_2\) and \(y = [y_{ij}]\) with \(y_{ij} \in M_k \otimes M_2\) for \(i,j \in \{1,\cdots,n\}\) Then
	\begin{equation*}
		\sum_{i,j = 1}^{n} x_{ij} \otimes y_{ji}^t \ge 0.
	\end{equation*}
\end{lem}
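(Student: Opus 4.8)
The plan is to recognize the claim as an instance of a single clean positivity fact about two positive block matrices, after first disposing of the transpose. I would begin by observing that if we regard \(y=[y_{ij}]_{i,j=1}^n\) as an element of \(M_n(M_k\otimes M_2)\) with the last tensor factor \(M_n\) serving as the block index, then the array \([y_{ji}^t]_{i,j}\) is precisely the full transpose \(y^T\): transposing a block matrix swaps block positions (\(ij\mapsto ji\)) and transposes each block, which matches \((y^T)_{ij}=y_{ji}^t\). Since the transpose of a positive operator is positive, \(z:=y^T\ge 0\), and the asserted inequality becomes \(\sum_{i,j}x_{ij}\otimes z_{ij}\ge 0\). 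Thus it suffices to prove the following general statement: if \(x=[x_{ij}]\in(L_p(\mathcal{M})\otimes M_2\otimes M_n)_+\) and \(z=[z_{ij}]\in(M_k\otimes M_2\otimes M_n)_+\), then \(\sum_{i,j}x_{ij}\otimes z_{ij}\ge 0\).

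For this I would use square roots, exactly as in the proof of Lemma \ref{omegaIsPositive}. Write \(x^{\frac12}=\sum_{a,b=1}^n \xi_{ab}\otimes e_{ab}\) with \(\xi_{ab}\in L_{2p}(\mathcal{M})\otimes M_2\) and \(z^{\frac12}=\sum_{a,b=1}^n \zeta_{ab}\otimes e_{ab}\) with \(\zeta_{ab}\in M_k\otimes M_2\), where the \(e_{ab}\) are the matrix units of the last factor \(M_n\); self-adjointness of the square roots gives \(\xi_{ab}^*=\xi_{ba}\) and \(\zeta_{ab}^*=\zeta_{ba}\). Multiplying out the squares blockwise yields the Gram forms
\[
x_{ij}=\sum_{b=1}^n \xi_{bi}^*\xi_{bj},\qquad z_{ij}=\sum_{c=1}^n \zeta_{ci}^*\zeta_{cj}.
\]
Substituting these and collecting the summation indices \(b,c\) then gives
\[
\sum_{i,j=1}^n x_{ij}\otimes z_{ij}=\sum_{b,c=1}^n\Big(\sum_{i=1}^n \xi_{bi}\otimes\zeta_{ci}\Big)^*\Big(\sum_{j=1}^n \xi_{bj}\otimes\zeta_{cj}\Big),
\]
which is a finite sum of operators of the form \(V_{bc}^*V_{bc}\) with \(V_{bc}=\sum_i \xi_{bi}\otimes\zeta_{ci}\in L_{2p}(\mathcal{M})\otimes M_2\otimes M_k\otimes M_2\), hence positive. (Alternatively, one may feed the two positive Gram matrices \([\,\xi_{bi}^*\xi_{bj}\,]_{i,j}\) and \([\,\zeta_{ci}^*\zeta_{cj}\,]_{i,j}\) directly into the computation carried out in the first half of Proposition \ref{operatorIsPositive}.)

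The only delicate point is the bookkeeping of the transpose together with the adjoints. Passing from \(y\) to the array \([y_{ji}^t]\) is exactly the full matrix transpose, and this is what restores positivity: the bare block-swap \([y_{ji}]\) is in general not positive, and it is the combination of the index flip with the inner transpose that lets the conjugation patterns of the two Gram decompositions line up, so that the double sum closes into the perfect square \(\sum_{b,c}V_{bc}^*V_{bc}\). This is the same role the transpose plays in Lemma \ref{omegaIsPositive}. Once \(\xi_{ab}^*=\xi_{ba}\) and \(\zeta_{ab}^*=\zeta_{ba}\) are used to rewrite the blockwise products of the square roots in Gram form, the remainder is a routine rearrangement of finite sums, with the positivity of each \(V_{bc}^*V_{bc}\) the only analytic input.
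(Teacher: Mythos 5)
Your proof is correct, and its engine --- closing the double sum into a finite sum of squares by substituting Gram decompositions of both factors --- is the same one the paper uses; the differences are in the preparation. The paper does not perform your global transpose reduction: it keeps \(y_{ji}^t\) in place and absorbs the index flip into the anti-multiplicativity of the transpose, writing \(y_{ji}^t = (y_j^*y_i)^t = y_i^{*t}y_j^t\) so that the square closes with \(y_i^t\) in the right leg; your observation that \([y_{ji}^t]_{ij}\) is exactly the full transpose \(y^T\ge 0\) is equivalent and arguably cleaner bookkeeping. The more substantive divergence is how you obtain the Gram form of the unbounded operator \(x\). The paper deliberately avoids taking square roots of matrices over \(L_p(\mathcal{M})\): it uses Lemma \ref{fGreaterX} and Theorem \ref{xAsBoundedOp} to factor \(x = (f^{\frac{1}{2}}\otimes 1_{2n})\,a\,(f^{\frac{1}{2}}\otimes 1_{2n})\) with \(a \in (\mathcal{M}\otimes M_2\otimes M_n)_+\) bounded, and then applies Takesaki's Lemma IV.3.1 to \(a\), so its Gram vectors are \(a_i(f^{\frac{1}{2}}\otimes 1_2)\). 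You instead take \(x^{\frac{1}{2}}\) and assert its entries lie in \(L_{2p}(\mathcal{M})\otimes M_2\). That is true, but it rests on the identification of \(L_p(\mathcal{M})\otimes M_N\) with the Haagerup space \(L_p(\mathcal{M}\otimes M_N)\) together with the homogeneity \(\theta_s(x^{\frac{1}{2}}) = e^{-\frac{s}{2p}}x^{\frac{1}{2}}\), which follows from uniqueness of positive square roots of \(\tau\)-measurable operators --- standard facts, but ones the paper never states, and plausibly the reason the author routed through the bounded factorization instead. If you want the step airtight within the paper's own toolkit, replace it by a citation of Lemma \ref{xFiniteLinearCombination} applied at matrix size \(2n\), regrouping the double index into \(2\times 2\) blocks (e.g.\ \(\xi_{ri} = \sum_{\nu} v_{r,(i,\nu)}\otimes \varepsilon_{1\nu}\)); that lemma is precisely the decomposition \(x_{ij} = \sum_b \xi_{bi}^*\xi_{bj}\) you need, proved by exactly the bounded-factorization route above. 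One small correction to your parenthetical remark: the first half of Proposition \ref{operatorIsPositive} produces only the scalar inequality \(<x,y>\,\ge 0\), not operator positivity of the contraction \(\sum_{i,j}x_{ij}\otimes z_{ij}\), so it cannot literally be ``fed'' the two Gram matrices; the sum-of-squares rearrangement you carry out in the main text is the argument that is actually needed, and it is sound.
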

\begin{proof}
	By Lemma \ref{xAsBoundedOp}, there is \(f \in L_p(\mathcal{M})_+\) and \(a \in (\mathcal{M} \otimes M_2 \otimes M_n)_+\) such that \(x = (f^{\frac{1}{2}} \otimes 1_{2n})a(f^{\frac{1}{2}} \otimes 1_{2n})\). By \cite{Ta}, Lemma IV.3.1, \(a\) can be written as a finite sum of matrices of the form \([a_i^*a_j]\) where \(a_1, \cdots, a_n \in \mathcal{M} \otimes M_2\). Similarly, \(y\) can be written as a finite sum of matrices of the form \([y_i^*y_j]\) where \(y_1, \cdots, y_n \in M_k \otimes M_2\). Thus 
	\(\sum_{i,j = 1}^{n} x_{ij} \otimes y_{ij}^t\) is a finite sum of elements of the form \(\sum_{i,j = 1}^{n} (f^{\frac{1}{2}} \otimes 1_2) a_i^*a_j (f^{\frac{1}{2}} \otimes 1_2) \otimes (y_i^*y_j)^t\). Then we have
	\begin{equation*}
	\begin{split}
	&\sum_{i,j = 1}^{n} (f^{\frac{1}{2}} \otimes 1_2) a_i^*a_j (f^{\frac{1}{2}} \otimes 1_2) \otimes (y_j^*y_i)^t \\  =&\sum_{i,j=1}^{n} (f^{\frac{1}{2}} \otimes 1_2)a_i^*a_j (f^{\frac{1}{2}} \otimes 1_2) \otimes y_i^{*t}y_j^t \\
	=&\left(\sum_{i=1}^{n}a_i(f^{\frac{1}{2}} \otimes 1_2) \otimes y_i^t\right)^*\left(\sum_{j=1}^{n}a_j(f^{\frac{1}{2}} \otimes 1_2) \otimes y_j^t\right) \ge 0 \qedhere 
	\end{split} 
	\end{equation*}
\end{proof}
\begin{prop}\label{tildeTDominatedByS}
	Let \(T: L_p(\mathcal{M}) \rightarrow L_1(M_k)\) be completely order bounded and let \(\tilde{T}\) be as in \eqref{defineTtilde}. Then there is a linear map \(S: L_p(\mathcal{M}) \otimes M_2 \rightarrow L_1(M_k) \otimes M_2\) such that \(S \pm \tilde{T}\) are completely positive and
	\begin{equation}\label{upperBoundForS}
	\begin{split}
		& 0 \le \left<S \left( \begin{bmatrix} 
		c & 0 \\ 0 & 0 \end{bmatrix} \right),
		\begin{bmatrix} 
		y & 0 \\ 0 & 0 \end{bmatrix} \right> \le 
		\|T\|_{cob} \|c\|_p \|y\|_{\infty} \\
		& 0 \le  \left<S \left( \begin{bmatrix} 
		0 & 0 \\ 0 & c \end{bmatrix} \right),
		\begin{bmatrix}
		0 & 0 \\ 0 & y \end{bmatrix} \right> \le 
		\|T\|_{cob} \|c\|_p \|y\|_{\infty}.
	\end{split}
	\end{equation}
	for all \(c \in L_p(\mathcal{M})_+\) and all \(y \in M_{k+}\).		
\end{prop}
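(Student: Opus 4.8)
The plan is to recast the existence of \(S\) as the existence of a single positive functional and to build that functional from the estimate already packaged in Lemma \ref{tCobDominatesPhiAMinusB}. Write \(A = L_p(\mathcal{M}) \otimes M_2\) and \(B = L_1(M_k) \otimes M_2\), and note that \(B \cong L_1(M_{2k})\) is a finite dimensional matrix space. To any linear map \(R : A \to B\) associate the functional \(\omega \circ R_{2k}\) on \(A \otimes M_{2k}\), exactly in the manner of \eqref{definePhiT}. Because the range \(B\) is a \(2k\)-dimensional matrix space, complete positivity of \(R\) is tested at the single level \(M_{2k}\): combining Proposition \ref{operatorIsPositive}, Lemma \ref{omegaIsPositive}, and Lemma \ref{xijTensoryjiPositive} (whose flip \(\sum x_{ij}\otimes y_{ji}^t \ge 0\) is precisely what reduces testing at an arbitrary level \(n\) to the level \(2k\)), one shows that \(R\) is completely positive if and only if \(\omega \circ R_{2k}\) is positive on \((A \otimes M_{2k})_+\). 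Applying this with \(R = S \pm \tilde{T}\) and setting \(X = A \otimes M_{2k}\), \(\varphi_T = \omega \circ \tilde{T}_{2k}\) as in \eqref{definePhiT}, \eqref{defineTtilde}, the whole problem reduces to producing a positive linear functional \(\psi\) on \(X\) with \(\psi \pm \varphi_T\) positive on \(X_+\) and with the corner control of \eqref{upperBoundForS}; the map \(S\) corresponding to \(\psi = \omega \circ S_{2k}\) is then automatically \(B\)-valued.

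The engine is a two-sided form of Lemma \ref{tCobDominatesPhiAMinusB}. For self-adjoint \(x \in X\) with \(-F \le x \le F\), where \(F = (f \otimes \varepsilon_{11} + g \otimes \varepsilon_{22}) \otimes 1_{2k}\) and \(f,g \in L_p(\mathcal{M})_+\), I apply the lemma to \(a = \tfrac12(F+x)\) and \(b = \tfrac12(F-x)\): these are positive, satisfy \(a+b = F\) so that \eqref{aPlusbLessThanf} holds, and \(a-b = x\). The lemma gives \(\varphi_T(x) \ge -\|T\|_{cob}(\|f\|_p+\|g\|_p)\), and exchanging \(a\) and \(b\) gives the reverse inequality, whence \(|\varphi_T(x)| \le \|T\|_{cob}(\|f\|_p+\|g\|_p)\). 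In particular, on the cone one has \(|\varphi_T| \le p\), where \(p(x) = \inf\{\|T\|_{cob}(\|f\|_p+\|g\|_p) : x \le F\}\) is a monotone sublinear functional (such an upper bound \(F\) of block-diagonal form exists for each positive \(x\); this is the one place a domination of positive elements by terms \(f \otimes \varepsilon_{11}\otimes 1_{2k} + g \otimes \varepsilon_{22}\otimes 1_{2k}\) is needed).

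With this estimate in hand I construct \(\psi\). Following the technique of Theorem \ref{infIsMinInNorm} and Theorem \ref{orderIntervalToOrderInterval}, I would confine the candidate positive functionals (equivalently completely positive maps into \(B\)) to a weak\(^*\)-compact set by imposing the corner bounds of \eqref{upperBoundForS}, and for each self-adjoint test element \(a \in X\) let \(K(a)\) be the weak\(^*\)-closed set of those candidates \(\psi\) with \((\psi \pm \varphi_T)(a) \ge 0\). Each \(K(a)\) is nonempty and finite families are handled simultaneously by placing the elements into a single diagonal block, exactly as in Theorem \ref{orderIntervalToOrderInterval}; the dominating positive pair produced inside the proof of Lemma \ref{tCobDominatesPhiAMinusB} supplies the required member of \(K(a)\). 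Banach--Alaoglu and the finite intersection property then give a common \(\psi\) in \(\bigcap_a K(a)\), hence a map \(S\) with \(S \pm \tilde{T}\) completely positive. Since \(S = \tfrac12[(S+\tilde{T})+(S-\tilde{T})]\) is then itself completely positive, the left-hand inequalities in \eqref{upperBoundForS} follow from Proposition \ref{operatorIsPositive} applied to \(S(c\otimes\varepsilon_{11}) \ge 0\) paired against \(y \otimes \varepsilon_{11} \ge 0\), and the right-hand inequalities follow by evaluating the \(p\)-bound on corner elements \(c \otimes \varepsilon_{11}\otimes \tilde{y}\) with \(0 \le \tilde{y} \le \|y\|_\infty 1_{2k}\), dominated by \(F\) with \(f = \|y\|_\infty c\), \(g = 0\), and symmetrically for the second corner.

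The main obstacle is the bookkeeping that keeps the two one-sided dominations \(S + \tilde{T} \ge 0\) and \(S - \tilde{T} \ge 0\) in force at once while preserving the \emph{sharp} constant \(\|T\|_{cob}\) on each corner separately, rather than the doubled constant that a naive sum of two separately dominating functionals would produce. This is exactly why a single functional must be extracted and why the splitting \(F = f \otimes \varepsilon_{11}\otimes 1_{2k} + g \otimes \varepsilon_{22}\otimes 1_{2k}\) is used, so that the \(f\)-part controls the first corner and the \(g\)-part the second. A secondary point to verify is that \(\psi\) is genuinely bounded and therefore corresponds to an honest map into \(B\); here the finite dimensionality of the range together with the norm comparison of Section~3 (relating \(\|\cdot\|_{p,n}\) to \(\|\cdot\|_p\)) furnishes the bound, so no passage to a bidual is required.
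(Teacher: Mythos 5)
Your reduction to a scalar problem is sound and matches the paper's architecture: the paper likewise works with \(\varphi_T = \omega\circ\tilde{T}_{2k}\), uses Lemma \ref{xijTensoryjiPositive} together with the identity \eqref{snPlusminusTn} to reduce complete positivity of \(S\pm\tilde{T}\) at every matrix level to positivity of \(\psi\pm\varphi_T\) on \((L_p(\mathcal{M})\otimes M_2\otimes M_{2k})_+\), and your two-sided repackaging of Lemma \ref{tCobDominatesPhiAMinusB} via \(a=\tfrac12(F+x)\), \(b=\tfrac12(F-x)\) is a correct reading of that lemma. The gap is in how you produce \(\psi\). The paper constructs it with the Hahn--Banach theorem: it defines the sublinear functional \(\theta\) of \eqref{defSublinearFunctional}, shows \(\theta(0)=0\) (this is exactly what Lemma \ref{tCobDominatesPhiAMinusB} is for), and extracts a linear \(\psi\le\theta\); the single inequality \(\psi\le\theta\) then simultaneously yields \(\psi\pm\varphi_T\ge 0\) on positives (via the choices of \(a,b\)) and the corner bounds with the sharp constant (via the choices of \(f,g\)). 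You instead propose a Banach--Alaoglu/finite-intersection argument over sets \(K(a)\) of candidate functionals, and this is where the proof fails: you never establish that \(K(a)\), let alone a finite intersection \(K(a_1)\cap\cdots\cap K(a_m)\), is nonempty. Your stated witness --- ``the dominating positive pair produced inside the proof of Lemma \ref{tCobDominatesPhiAMinusB}'' --- is the pair \((f_1,g_1)\in L_1(M_k)_+\times L_1(M_k)_+\) of \eqref{TaiTensorAlphi}; it certifies one matrix inequality and hence one numerical estimate, but it is not, and does not canonically induce, a positive linear functional on all of \(L_p(\mathcal{M})\otimes M_2\otimes M_{2k}\) satisfying \((\psi\pm\varphi_T)(a)\ge 0\) together with the global corner bounds. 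Producing such a functional, even for a single \(a\), is precisely the positivity-constrained extension problem; compactness can only intersect sets already known to be nonempty, so it cannot substitute for the Hahn--Banach step. Note also that in the paper's final theorem the analogous compact sets \(K(X,\varepsilon)\) are shown nonempty by invoking Proposition \ref{TdominatedByS1AndS2}, which rests on the present proposition --- so routing this proposition through the same compactness scheme is circular in spirit.

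Two smaller points. Your sets \(K(a)\) must be indexed by \emph{positive} \(a\), not self-adjoint \(a\): imposing \((\psi\pm\varphi_T)(a)\ge 0\) for all self-adjoint \(a\) (hence for \(\pm a\)) would force \(\psi+\varphi_T=\psi-\varphi_T=0\). The peripheral observations are fine: block-diagonal majorants \(F\) exist (take \(f=g=h\) with \(h\otimes 1_{4k}\pm x\ge 0\) from Theorem \ref{normForXSelfAdjoint}); the boundedness of \(\psi\) and hence the existence of the \(L_1(M_k)\otimes M_2\)-valued map \(S\) follows, as in the paper, from \(0\le\psi(c\otimes y^t)\le 2\|T\|_{cob}\) plus decomposition into positives; and your derivation of \eqref{upperBoundForS} with \(f=\|y\|_\infty c\), \(g=0\) is the paper's final step. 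But all of this presupposes the functional whose construction is the missing idea: you need the sublinear functional \(\theta\) and Hahn--Banach (or an equivalent separation argument), not Banach--Alaoglu.
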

\begin{proof}
	Let \((L_p(\mathcal{M}) \otimes M_2 \otimes M_{2k})_h\) denote the self adjoint part of \( L_p(\mathcal{M}) \otimes M_2 \otimes M_{2k}\) and let \(\varphi_T\) be the linear functional defined by \eqref{definePhiT}. For \(x \in (L_p(\mathcal{M}) \otimes M_2  \otimes M_{2k})_h\) we define
	\begin{equation}\begin{split}\label{defSublinearFunctional}
		\theta(x) = \inf \bigl\{\ &\|T\|_{cob} (\|f\|_p + \|g\|_p) + \varphi_T(a-b) \nobreakspace \big| \\
		&a, b \in (L_p(\mathcal{M}) \otimes M_2 \otimes M_{2k})_+, f, g \in L_p(\mathcal{M})_+, \\
		& x + a + b \le f \otimes \varepsilon_{11} \otimes 1_{2k}+ g \otimes \varepsilon_{22} \otimes 1_{2k} \bigr\}.
	\end{split}\end{equation}
	By Lemma \ref{fGreaterX}, the set on the right side of \eqref{defSublinearFunctional} is not empty, so \(\theta\) is well defined. We will show that \(\theta\) is sublinear. To do this, let \(x_1, x_2 \in  (L_p(\mathcal{M}) \otimes M_2 \otimes M_{2k})_h\) and \(\varepsilon > 0\). Then there exist \(a_1, b_1, a_2, b_2 \in (L_p(\mathcal{M}) \otimes M_2 \otimes M_{2k})_+\) and \(f_1, g_1, f_2, g_2 \in L_p(\mathcal{M})_+\) such that
	\begin{equation*}
	\begin{split}
		&x_1 + a_1 + b_1 \le f_1 \otimes \varepsilon_{11} \otimes 1_{2k} + g_1 \otimes \varepsilon_{22} \otimes 1_{2k}, \\
		&x_2 + a_2 + b_2 \le f_2 \otimes \varepsilon_{11} \otimes 1_{2k} + g_2 \otimes \varepsilon_{22} \otimes 1_{2k}, \\		&\theta(x_1) \ge \|T\|_{cob}(\|f_1\|_p + \|g_1\|_p) + \varphi_T(a_1 - b_1) - \varepsilon, \\
		&\theta(x_2) \ge \|T\|_{cob}(\|f_2\|_p + \|g_2\|_p) + \varphi_T(a_2 - b_2) - \varepsilon.
	\end{split}
	\end{equation*}
	Then we get
	\begin{equation*}
		x_1 + x_2 + a_1 + a_2 + b_1 + b_2 \le (f_1 + f_2) \otimes \varepsilon_{11} \otimes 1_{2k} + (g_1 + g_2) \otimes \varepsilon_{22} \otimes 1_{2k}. 
	\end{equation*}
	This implies
	\begin{equation*}
		\theta(x_1 + x_2) \le \theta(x_1) + \theta(x_2) - 2\varepsilon.
	\end{equation*}
	Since \(\varepsilon\) was arbitrary, \(\theta\) is sub-additive. Similarly, we show for \( 0 < \lambda \in \mathbb{R}\) and \(x \in  (L_p(\mathcal{M}) \otimes M_2 \otimes M_{2k})_h\) that
	\begin{equation*}
		\theta(\lambda x) \le \lambda \theta(x)
	\end{equation*}
	and hence
	\begin{equation*}
		\lambda \theta(x) = \lambda \theta( \frac{1}{\lambda} \lambda x) \le \lambda \frac{1}{\lambda} \theta(\lambda x) = \theta(\lambda x).
	\end{equation*}
	It remains to show that \( \theta(0) = 0\). If we put \( x = a = b = 0, f = g = 0\) in \eqref{defSublinearFunctional}, we get \(\theta(0) \le 0\). Lemma \ref{tCobDominatesPhiAMinusB} states that every element in the set on the right side of \eqref{defSublinearFunctional} is not negative for \(x = 0\). Hence \(\theta(0) = 0\).
	By the Hahn-Banach theorem there is a real-linear functional
	\begin{equation}\label{hahn-banach}
		\psi: (L_p(\mathcal{M}) \otimes M_2 \otimes M_{2k})_h \rightarrow \mathbb{R}
	\end{equation}
	such that \(\psi(x) \le \theta(x)\) for all \(x \in (L_p(\mathcal{M}) \otimes M_2 \otimes M_{2k})_h\). Now we can extend \(\psi\) to a complex linear functional on \(L_p(\mathcal{M}) \otimes M_2 \otimes M_{2k}\) by putting \(\psi(x) = \frac{1}{2}\psi(x + x^*) + \frac{1}{2} \textrm{i} \psi(\textrm{i} x^* - \textrm{i} x)\) for \(x \in L_p(\mathcal{M}) \otimes M_2 \otimes M_{2k} \).
	For \(c \in (L_p(\mathcal{M}) \otimes M_2)_+, \|c\|_{p,2} \le 1\), by Theorem \ref{infIsMinInNorm}, there exists \(f \in L_p(\mathcal{M})_+\) such that
	\begin{equation*}
		f \otimes 1_2 - c \ge 0 \textrm{ and } \|f\|_p \le 1. 
	\end{equation*}
	For \(y \in (M_{2k})_+, 0 \le y \le 1_{2k}\), we put \(x = c \otimes y^t, a = b = 0\), apply \eqref{defSublinearFunctional}, and get
	\begin{equation}\label{psiLessTcob}
		\psi(c \otimes y^t) \le \theta(c \otimes y^t) \le 2\|T\|_{cob}\|f\|_p \le 2 \|T\|_{cob}.
	\end{equation}
	Then we put \(x = -c \otimes y^t, a =  c \otimes y^t, b = 0, f = g = 0\), apply \eqref{defSublinearFunctional}, and get
	\begin{equation}\label{psiLessPhiT}
		\psi(-c \otimes y^t) \le \theta(-c \otimes y^t) \le \varphi_T(c \otimes y^t)
	\end{equation}
	For \(x = -c \otimes y^t, a = 0, b = c \otimes y^t, f = g = 0\), we apply \eqref{defSublinearFunctional} and \eqref{hahn-banach} and get
	\begin{equation}\label{psiLessMinusPhiT}
		\psi(-c \otimes y^t) \le \theta(-c \otimes y^t) \le - \varphi_T(c \otimes y^t)
	\end{equation}
	We combine \eqref{psiLessTcob}, \eqref{psiLessPhiT},  \eqref{psiLessMinusPhiT}, and get 
	\begin{equation*}
		0 \le \psi(c \otimes y^t) \le 2\|T\|_{cob}.
	\end{equation*}
	By Theorem \ref{xAsBoundedOp} and \cite{Bl}, Proposition II.3.1.2, we can write \(c \in L_p(\mathcal{M}) \otimes M_2, \|c\|_{p,2} \le 1\) as sum \(c = c_1 - c_2 + \textrm{i}(c_3 - c_4)\) where \(c_i \ge 0\) and \(\|c_i\|_{p,2} \le 1\) for \(i = 1, \cdots,4\). A similar decomposition holds for \(y \in M_{2k}\). Thus the bilinear map
	\begin{equation*}
		B: L_p(\mathcal{M}) \otimes M_2 \times M_{2k} \to \mathbb{C}, B(c,y) = \psi(c \otimes y^t)
	\end{equation*}
	is bounded. Hence there is a linear map \(S: L_p(\mathcal{M}) \otimes M_2 \rightarrow L_1(M_k) \otimes M_2\) such that \(<S(c),y> = \psi(c \otimes y^t)\).
	Next we show that \( S \pm \tilde{T}\) are completely positive. By Proposition \ref{operatorIsPositive}, it suffices to show that for 
	\(n \in \mathbb{N}\), \( x \in (L_p(\mathcal{M}) \otimes M_2 \otimes M_n)_+, y \in (M_k \otimes M_2 \otimes M_n)_+\), where \(x = [x_{ij}]\) with \(x_{ij} \in L_p(\mathcal{M}) \otimes M_2\) and \( y = [y_{ij}] \in M_k \otimes M_2\) the expression \(<S_n(x) \pm \tilde{T}_n(x),y>\) is positive. Now we have
	\begin{equation}\label{snPlusminusTn}
	\begin{split}
		<S_n(x) \pm \tilde{T}_n(x),y> &= \sum_{i,j = 1}^{n}<S(x_{ij}) \pm \tilde{T}(x_{ij}), y_{ji}> \\ 
		&= \psi(\sum_{i,j=1}^{n} x_{ij} \otimes y_{ji}^t) \pm \varphi_T(\sum_{i,j=1}^{n} x_{ij} \otimes y_{ji}^t).
	\end{split}
	\end{equation}
	By  Lemma \ref{xijTensoryjiPositive}, \(\sum_{i,j=1}^{n} x_{ij} \otimes y_{ji}^t\) is positive. We put \(x = - \sum_{i,j=1}^{n} x_{ij} \otimes y_{ji}^t,\) \(a = -x, b = 0, f = g = 0\) in \eqref{defSublinearFunctional} and get
	\begin{equation}\label{psiPlusPhi}
		- \psi(\sum_{i,j=1}^{n} x_{ij} \otimes y_{ji}^t) \le \varphi_T(\sum_{i,j=1}^{n} x_{ij} \otimes y_{ji}^t).
	\end{equation}
	Similarly, we put \(x = \sum_{i,j=1}^{n} x_{ij} \otimes y_{ji}^t,a = 0, b = -x, f = g = 0\) and get
	\begin{equation}\label{psiMinusPhi}
	- \psi(\sum_{i,j=1}^{n} x_{ij} \otimes y_{ji}^t) \le - \varphi_T(\sum_{i,j=1}^{n} x_{ij} \otimes y_{ji}^t).
	\end{equation}
	The combination of \eqref{snPlusminusTn}, \eqref{psiPlusPhi}, and \eqref{psiMinusPhi} gives
	\begin{equation*}
		<S_n(x) \pm \tilde{T}_n(x), y> \ge 0. 
	\end{equation*}
	Now let \(c \in L_p(\mathcal{M})_+\) and \(y \in M_{k+}\). We put \(x = c \otimes \varepsilon_{11} \otimes y^t \otimes \varepsilon_{11}, a = b = 0, f = \|y\|_{\infty}\cdot c, g = 0\). Then \eqref{defSublinearFunctional} gives
	\begin{equation*}
		0 \le \left< S \left( \begin{bmatrix} 
		c & 0 \\ 0 & 0 \end{bmatrix} \right),
		\begin{bmatrix} 
		y & 0 \\ 0 & 0 \end{bmatrix} \right> = \psi(x) \le \theta(x) \le \|T\|_{cob}\|c\|_p\|y\|_{\infty}.
	\end{equation*}
	The second part of \eqref{upperBoundForS} is shown similarly.
\end{proof}
\begin{prop}\label{TdominatedByS1AndS2}
	Let \( k \in \mathbb{N}\) and \(T: L_p(\mathcal{M}) \rightarrow L_1(M_k)\) be completely order bounded. Then there are linear maps \(S_1, S_2: L_p(\mathcal{M}) \rightarrow L_1(M_k)\) such that the map
	\begin{equation*}
	\begin{split}
		\Phi:~ &L_p(\mathcal{M}) \otimes M_2 \rightarrow L_1(M_k) \otimes M_2 \\
		& \left[ \begin{array}{cc} 
		x_{11} & x_{12} \\ x_{21} & x_{22} 
		\end{array} \right] \mapsto
		\left[ \begin{array}{cc} 
		S_1(x_{11}) & T(x_{12}) \\ T(x_{21}^*)^* & S_2(x_{22})
		\end{array} \right]
	\end{split}
	\end{equation*}
	is completely positive and \(\|S_1\|, \|S_2\| \le \|T\|_{cob}\).
\end{prop}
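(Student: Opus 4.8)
The plan is to extract the two diagonal maps from the single dominating map produced in Proposition \ref{tildeTDominatedByS} by a compression-and-symmetrisation argument, and to verify complete positivity throughout by means of the duality criterion of Proposition \ref{operatorIsPositive}.

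First I would invoke Proposition \ref{tildeTDominatedByS} to obtain a linear map \(S\colon L_p(\mathcal M)\otimes M_2\to L_1(M_k)\otimes M_2\) such that \(S+\tilde T\) and \(S-\tilde T\) are completely positive and the corner estimates \eqref{upperBoundForS} hold. I then define
\[
S_1(x)=\bigl(S(x\otimes\varepsilon_{11})\bigr)_{11},\qquad S_2(x)=\bigl(S(x\otimes\varepsilon_{22})\bigr)_{22},\qquad x\in L_p(\mathcal M).
\]
Since \(\tilde T(x\otimes\varepsilon_{ii})=0\), each \(S_i\) factors as the completely positive embedding \(x\mapsto x\otimes\varepsilon_{ii}\), followed by the completely positive map \(S+\tilde T\), followed by the completely positive corner projection; hence \(S_1,S_2\) are completely positive. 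The first inequality in \eqref{upperBoundForS} gives \(S_i\ge 0\), and the second, taken with \(y=1_k\) together with the norm formula for completely positive maps proved earlier, yields \(\|S_1\|,\|S_2\|\le\|T\|_{cob}\).

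The heart of the matter is that the matching-corner compression preserves complete positivity. For a linear map \(\Psi\colon L_p(\mathcal M)\otimes M_2\to L_1(M_k)\otimes M_2\) set
\[
C(\Psi)\bigl([x_{uv}]\bigr)=\sum_{u,v=1}^{2}\bigl(\Psi(x_{uv}\otimes\varepsilon_{uv})\bigr)_{uv}\otimes\varepsilon_{uv}.
\]
I claim \(C(\Psi)\) is completely positive whenever \(\Psi\) is. To see this I would test against Proposition \ref{operatorIsPositive}: for positive \(x=[x_{ij}]\), \(y=[y_{ij}]\) one computes
\[
\bigl\langle C(\Psi)_n(x),y\bigr\rangle=\sum_{i,j}\sum_{u,v}\bigl\langle \Psi(x_{ij}^{uv}\otimes\varepsilon_{uv}),\,y_{ji}^{vu}\otimes\varepsilon_{vu}\bigr\rangle=\bigl\langle \Psi_{2n}(\hat x),\hat y\bigr\rangle,
\]
where \(\hat x,\hat y\in L_p(\mathcal M)\otimes M_2\otimes M_{2n}\) are the elements whose \(\bigl((u,i),(v,j)\bigr)\)-block equals \(x_{ij}^{uv}\otimes\varepsilon_{uv}\), respectively \(y_{ij}^{uv}\otimes\varepsilon_{uv}\). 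The decisive observation is that \(\hat x=(1\otimes V)\,x\,(1\otimes V)^{*}\) for the scalar isometry \(V\colon\mathbb C^2\otimes\mathbb C^n\to\mathbb C^2\otimes\mathbb C^{2n}\), \(V(e_u\otimes e_i)=e_u\otimes e_{(u,i)}\), so that \(\hat x,\hat y\ge 0\); complete positivity of \(\Psi\) then forces the pairing to be nonnegative. Identifying \(C(\Psi)\) with a genuine isometric compression of \(\Psi\) is the step I expect to demand the most care, since one must link the inner \(M_2\)-index to the first outer index and check that the resulting operator really is the conjugate of \(x\) by \(1\otimes V\).

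Finally I would assemble \(\Phi\). Applying \(C\) to the two completely positive maps gives completely positive maps \(C(S+\tilde T)\) and \(C(S-\tilde T)\); both have diagonal corners \(S_1,S_2\), because \(\tilde T\) annihilates the diagonal, whereas their \((1,2)\)-entries are \(\bigl(S(\,\cdot\,\otimes\varepsilon_{12})\bigr)_{12}\pm T\). Let \(\gamma\) denote conjugation by \(\mathrm{diag}(1,-1)\) on \(L_1(M_k)\otimes M_2\), a \(*\)-automorphism and hence completely positive. Then
\[
\Phi:=\tfrac12\bigl(C(S+\tilde T)+\gamma\circ C(S-\tilde T)\bigr)
\]
is completely positive as an average of completely positive maps; since \(\gamma\) fixes the diagonal corners and negates the off-diagonal ones, the unwanted terms \(\bigl(S(\,\cdot\,\otimes\varepsilon_{12})\bigr)_{12}\) cancel and the off-diagonal entries collapse to exactly \(T(x_{12})\) and \(T(x_{21}^{*})^{*}\). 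This produces \(\Phi\) in the required form with completely positive diagonal maps \(S_1,S_2\) satisfying \(\|S_1\|,\|S_2\|\le\|T\|_{cob}\), which completes the proof.
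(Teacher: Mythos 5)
Your proof is correct, and it reuses the paper's skeleton: both arguments start from the map \(S\) of Proposition \ref{tildeTDominatedByS}, define \(S_1\) and \(S_2\) as the matching diagonal corners \(x\mapsto(S(x\otimes\varepsilon_{11}))_{11}\) and \(x\mapsto(S(x\otimes\varepsilon_{22}))_{22}\) (the paper writes these as \(\alpha S(\cdot)\alpha^*\) and \(\beta S(\cdot)\beta^*\) with \(\alpha=[1~0]\), \(\beta=[0~1]\)), and obtain \(\|S_1\|,\|S_2\|\le\|T\|_{cob}\) by pairing with \(1_k\) via \eqref{upperBoundForS}. Where you genuinely diverge is in the assembly of \(\Phi\). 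The paper factors \(\Phi=\Phi_3\circ\Phi_2\circ\Phi_1\): conjugation by the scalar \(2\times4\) matrix \(\gamma_1\) routes the four entries of the input into the four corner blocks of \(L_p(\mathcal{M})\otimes M_4\), the block map \(\left[\begin{smallmatrix}S&\tilde{T}\\ \tilde{T}&S\end{smallmatrix}\right]\) acts there --- its complete positivity coming from \(S_{2n}(x)\pm\tilde{T}_{2n}(x)\ge0\) via Lemma \ref{orderSelfAdjoint} followed by two further conjugations with scalar matrices --- and \(\gamma_1\) compresses back; since the off-diagonal input entries land in blocks on which only \(\tilde{T}\) acts, the spurious terms \((S(\cdot\otimes\varepsilon_{12}))_{12}\) never arise and no cancellation is needed. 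You instead compress \(S+\tilde{T}\) and \(S-\tilde{T}\) separately through the matching-corner map \(C\) and kill the spurious corners by averaging with the sign-flip automorphism; this works, and the step you flagged as delicate does go through: with \(V(e_u\otimes e_i)=e_u\otimes e_{(u,i)}\) one computes \(V(\varepsilon_{uv}\otimes\varepsilon_{ij})V^*=\varepsilon_{uv}\otimes\varepsilon_{(u,i),(v,j)}\), so \(\hat{x}=(1\otimes V)x(1\otimes V)^*\) and \(\hat{y}\) are indeed positive, and your pairing identity \(\langle C(\Psi)_n(x),y\rangle=\langle\Psi_{2n}(\hat{x}),\hat{y}\rangle\) checks out against the paper's convention \(\langle[a_{ij}],[b_{ij}]\rangle=\sum_{i,j}\langle a_{ij},b_{ji}\rangle\); Proposition \ref{operatorIsPositive}, applied in both directions (and noting that for \(L_1(M_k)\otimes M_2\otimes M_n\) the test objects are exactly the positive elements of \(M_k\otimes M_2\otimes M_n\)), then yields complete positivity of \(C(\Psi)\). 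Comparing what each route buys: the paper needs only positivity of conjugation by scalar matrices plus Lemma \ref{orderSelfAdjoint} and avoids any duality argument at this stage, while yours isolates a reusable general principle --- matching-corner compression preserves complete positivity --- and makes the role of \(S\pm\tilde{T}\) symmetric and transparent, at the price of the duality computation and heavier index bookkeeping. In fact the \(M_4\)-level analogue of your compression \(C\), applied to the paper's block map \(\left[\begin{smallmatrix}S&\tilde{T}\\ \tilde{T}&S\end{smallmatrix}\right]\), is precisely \(\Phi_3\circ\Phi_2\circ\Phi_1\), so the two constructions differ exactly in whether the off-diagonal corners are harvested from \(\tilde{T}\) directly or recovered from \(S\pm\tilde{T}\) by symmetrization.
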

\begin{proof}
	Let \(\tilde{T}\) be as in \eqref{defineTtilde}. By Proposition \ref{tildeTDominatedByS}, there is a linear map \(S:~ L_p(\mathcal{M}) \otimes M_2 \to L_1(M_k) \otimes M_2\) such that \(S \pm \tilde{T}\) are completely positive and for \(c \in L_p(\mathcal{M})_+, y \in M_{k+}\)
	\begin{equation*}
	\begin{split}
	& 0 \le \left<S \left( \begin{bmatrix} 
	c & 0 \\ 0 & 0 \end{bmatrix} \right),
	\begin{bmatrix}{} 
	y & 0 \\ 0 & 0 \end{bmatrix} \right> \le 
	\|T\|_{cob} \|c\|_p \|y\|_{\infty} \\
	& 0 \le  \left<S \left( \begin{bmatrix} 
	0 & 0 \\ 0 & c \end{bmatrix} \right),
	\begin{bmatrix} 
	0 & 0 \\ 0 & y \end{bmatrix} \right> \le 
	\|T\|_{cob} \|c\|_p \|y\|_{\infty}.
	\end{split}
	\end{equation*}
	The next steps are quite similar to the proof of \cite{ArKr}, Proposition 3.18. Let \(\alpha\) be the scalar \(1\times 2\) matrix \([1~0]\) and \(\beta\) be the scalar \(1\times 2\) matrix \([0~ 1]\), and let \(\alpha^*, \beta^*\) be the adjoined matrices. We put
	\begin{equation*}
		S_1: L_p(\mathcal{M}) \to L_1(M_k), 
		a \mapsto \alpha S \left( \begin{bmatrix} 
		a & 0 \\ 0 & 0 
		\end{bmatrix} \right) \alpha^*
	\end{equation*}
	and
	\begin{equation*}
		S_2: L_p(\mathcal{M}) \to L_1(M_k), 
		a \mapsto \beta S \left( \begin{bmatrix} 
		0 & 0 \\ 0 & a 
		\end{bmatrix} \right) \beta^*
	\end{equation*}
	Since symmetric multiplication with a matrix and its adjoint is completely positive, the maps \(S_1\) and \(S_2\) and completely positive. For \(a \in L_p(\mathcal{M})_+\), we have
	\begin{equation*}
	\begin{split}
		\|S_1(a)\|_1 &= <S_1(a),1_k> = \left<\alpha S \left(  \begin{bmatrix} 
		a & 0 \\ 0 & 0 
		\end{bmatrix} \right) \alpha^*,1_k\right> \\
		&= \left<S \left( \begin{bmatrix} 
		a & 0 \\ 0 & 0 
		\end{bmatrix} \right),\begin{bmatrix} 
		1 & 0 \\ 0 & 0 
		\end{bmatrix} \right> \le \|T\|_{cob}\|a\|_p.
		\end{split}
	\end{equation*}
	Hence \(\|S_1\| \le \|T\|_{cob}\). Similarly, \(\|S_2\| \le \|T\|_{cob}\). Let \(\gamma_1\) be the scalar \(2 \times 4\) matrix \(\left[ \begin{smallmatrix} 
	1 & 0 & 0 & 0\\ 0 & 0 & 0 & 1
	\end{smallmatrix} \right].\) For \(a = [a_{ij}] \in L_p(\mathcal{M}) \otimes M_2\) we have
	\begin{equation*}
		\gamma_1^* \begin{bmatrix} 
		a_{11} & a_{12} \\ a_{21} & a_{22} 
		\end{bmatrix} \gamma_1 =
		\begin{bmatrix} 
		a_{11} & 0 & 0 & a_{12} \\ 
		0      & 0 & 0 & 0 \\ 
		0      & 0 & 0 & 0 \\ 
		a_{21} & 0 & 0 & a_{22} 
		\end{bmatrix}
	\end{equation*}
	and the map
	\begin{equation*}
		\Phi_1: L_p(\mathcal{M})  \otimes M_2 \to L_p(\mathcal{M}) \otimes M_4, a \mapsto \gamma_1^* a \gamma_1
	\end{equation*}
	is completely positive. Next, we show that the map
	\begin{equation*}
	\begin{split}
		\Phi_2:~& L_p(\mathcal{M}) \otimes M_4 \rightarrow L_1(M_k) \otimes M_4 \\
		& \begin{bmatrix} 
		x_{11} & x_{12}   \\ 
		x_{21}  & x_{22} 
		\end{bmatrix} \mapsto
		\begin{bmatrix} 
		S(x_{11}) & \tilde{T}(x_{12})   \\ 
		\tilde{T}(x_{21})  & S(x_{22}) 
		\end{bmatrix} \textrm{ where } x_{ij} \in L_p(\mathcal{M}) \otimes M_2
	\end{split}
	\end{equation*}
	is completely positive. So let \(n \in \mathbb{N}\) and \(x = [x_{ij}] \in (L_p(\mathcal{M})\otimes M_4 \otimes M_n )_+\) where \( x_{ij} \in L_p(\mathcal{M}) \otimes M_4 \) . Then \(S_{2n}(x) \pm \tilde{T}_{2n}(x) \ge 0\), and, by Lemma \ref{orderSelfAdjoint}
	\begin{equation*}
		\begin{bmatrix} 
		S_{2n}(x) & \tilde{T}_{2n}(x) \\ 
		\tilde{T}_{2n}(x)  & S_{2n}(x) 
		\end{bmatrix}  \ge 0.
	\end{equation*}
	We write \(x\) as 
	\begin{equation*}
		x = \begin{bmatrix} 
		x_{ij11} & x_{ij12}   \\ 
		x_{ij21}  & x_{ij22} 
		\end{bmatrix} 
	\end{equation*}
	where \(x_{ijlm} \in L_p(\mathcal{M}) \otimes M_2, i,j \in \{1, \cdots,n\},~ l,m = 1,2.\) Then
	\begin{equation*}
		\begin{bmatrix} 
		S_{2n}(x) & \tilde{T}_{2n}(x) \\ 
		\tilde{T}_{2n}(x)  & S_{2n}(x) 
		\end{bmatrix} = 
		\begin{bmatrix} 
		S_n(x_{ij11}) & S_n(x_{ij12}) & \tilde{T}_n(x_{ij11}) & \tilde{T}_n(x_{ij12})\\ 
		S_n(x_{ij21}) & S_n(x_{ij22}) & \tilde{T}_n(x_{ij21}) & \tilde{T}_n(x_{ij22})\\ 
		\tilde{T}_n(x_{ij11}) & \tilde{T}_n(x_{ij12}) & S_n(x_{ij11}) & S_n(x_{ij12})\\ 
		\tilde{T}_n(x_{ij21}) & \tilde{T}_n(x_{ij22}) & S_n(x_{ij21}) & S_n(x_{ij22})\\ 
		\end{bmatrix}.
	\end{equation*}
	We multiply this matrix from left by the scalar matrix \(\gamma_2 = \begin{bmatrix} 
	1 & 0 & 0 & 0   \\ 
	0 & 0 & 0 & 1 \end{bmatrix}\)
	and from right the by its adjoined matrix \(\gamma_2^*\), and get
	\begin{equation}\label{sttsPositive}
		 0 \le \begin{bmatrix} 
		 S_n([x_{ij11}]) & \tilde{T}_n([x_{ij12}])\\ 
		 \tilde{T}_n([x_{ij21}]) & S_n([x_{ij22}])\\ 
		 \end{bmatrix}.
	\end{equation}	
	We multiply the matrix in \eqref{sttsPositive} from right by the scalar \(2n \times 2n\) matrix \(\gamma_3\) and from left by \(\gamma_3^*\), where \(\gamma_3\) has 1 at position \((l,m)\) when \((l,m) = (2i-1,i)\) or \((l,m) = (2i,n+i)\) for \(i \in \lbrace 1,\cdots,n \rbrace\) and 0 else. Then the resulting matrix is \(\Phi_{2,n}(x)\). This shows that \(\Phi_2\) is completely positive.
	Next we define the linear map
	\begin{equation*}
		\Phi_3:~ L_1(M_k) \otimes M_4 \rightarrow L_1(M_k) \otimes M_2, b \mapsto \gamma_1 b \gamma_1^*
	\end{equation*}
	where \(\gamma_1\) is the \(2 \times 4\) matrix used in the beginning of the proof. Then \(\Phi_3\) is completely positive. Since \( \Phi = \Phi_3 \circ \Phi_2 \circ \Phi_1\), \(\Phi\) is completely positive. This finishes the proof.
\end{proof}
For linear maps \(S_1, S_2, T: L_p(\mathcal{M}) \rightarrow L_1(\mathcal{N})\) we define
\begin{equation}\label{definitionS1S2T}
\begin{split}
	\begin{bmatrix} 
	S_1 & T \\  T^*  & S_2 
	\end{bmatrix} :~ & L_p(\mathcal{M}) \otimes M_2 \rightarrow L_1(\mathcal{N}) \otimes M_2 \\
	& \begin{bmatrix}
	x_{11} & x_{12} \\  x_{21}  & x_{22} 
	\end{bmatrix}	\mapsto
	\begin{bmatrix} 
	S_1(x_{11}) & T(x_{12}) \\  T(x_{21}^*)^*  & S_2(x_{22}) 
	\end{bmatrix}
\end{split}
\end{equation} 
\begin{lem}
	Let \(\mathcal{N}\) be injective, \(T: L_p(\mathcal{M}) \rightarrow L_1(\mathcal{N})\) be completely order bounded, \(m, n_1,\cdots n_m \in \mathbb{N}\), \(x_l \in (L_p(\mathcal{M}) \otimes M_2 \otimes M_{n_l})_+, y_l \in (\mathcal{N} \otimes M_2 \otimes M_{n_l})_+, \textrm{ for } l= 1, \cdots,m\) and \(\varepsilon > 0\). Then there exist completely positive maps \(S_1, S_2: L_p(\mathcal{M}) \rightarrow L_1(\mathcal{N})\) such that \(\|S_1\|, \|S_2\| \le \|T\|_{cob}\) and for the map \(\left[ \begin{smallmatrix} S_1 & T \\  T^*  & S_2 \end{smallmatrix} \right]\) defined in \eqref{definitionS1S2T} holds
	\begin{equation*}
		\left< \begin{bmatrix} 
		S_1 & T \\  T^*  & S_2 \end{bmatrix}_{n_l}(x_l),y_l\right> \ge - \varepsilon, \quad l = 1,\cdots m.
	\end{equation*}
\end{lem}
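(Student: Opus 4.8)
The plan is to reduce the statement to the already-established matrix-valued case of Proposition~\ref{TdominatedByS1AndS2} by exploiting that an injective von Neumann algebra is semidiscrete. Concretely, since \(\mathcal{N}\) is injective there are nets of normal completely positive contractions \(\phi_\alpha\colon \mathcal{N}\to M_{k_\alpha}\) and \(\psi_\alpha\colon M_{k_\alpha}\to\mathcal{N}\) with \(\psi_\alpha\circ\phi_\alpha\to \mathrm{id}_{\mathcal{N}}\) in the point--weak\(^*\) topology. For a single index \(\alpha\) (to be fixed large at the end) I would write \(\phi=\phi_\alpha\), \(\psi=\psi_\alpha\), \(k=k_\alpha\), and pass to the preadjoints \(\phi_*\colon L_1(M_k)\to L_1(\mathcal{N})\) and \(\psi_*\colon L_1(\mathcal{N})\to L_1(M_k)\); being preadjoints of completely positive contractions, these are themselves completely positive contractions, hence completely order bounded with \(\|\phi_*\|_{cob}=\|\phi_*\|\le 1\) and likewise for \(\psi_*\).

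Next I would set \(T_k:=\psi_*\circ T\colon L_p(\mathcal{M})\to L_1(M_k)\). Since the composition of completely order bounded maps is completely order bounded with submultiplicative \(\|\cdot\|_{cob}\), this gives \(\|T_k\|_{cob}\le\|\psi_*\|_{cob}\,\|T\|_{cob}\le\|T\|_{cob}\). Applying Proposition~\ref{TdominatedByS1AndS2} to \(T_k\) yields completely positive maps \(\tilde S_1,\tilde S_2\colon L_p(\mathcal{M})\to L_1(M_k)\) with \(\|\tilde S_i\|\le\|T_k\|_{cob}\le\|T\|_{cob}\) and such that the block map \(\left[\begin{smallmatrix}\tilde S_1 & T_k\\ T_k^* & \tilde S_2\end{smallmatrix}\right]\) is completely positive. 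Pushing forward with \(\phi_*\), I put \(S_1:=\phi_*\circ\tilde S_1\) and \(S_2:=\phi_*\circ\tilde S_2\); these are completely positive and, since \(\|\phi_*\|\le1\), satisfy \(\|S_i\|\le\|T\|_{cob}\). Because \(\phi_*\circ\psi_*=(\psi\circ\phi)_*\), the composite \((\phi_*\otimes\mathrm{id}_{M_2})\circ\left[\begin{smallmatrix}\tilde S_1 & T_k\\ T_k^* & \tilde S_2\end{smallmatrix}\right]\) equals \(\left[\begin{smallmatrix}S_1 & (\psi\phi)_*T\\ ((\psi\phi)_*T)^* & S_2\end{smallmatrix}\right]\), and as a composition of completely positive maps it is completely positive.

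By Proposition~\ref{operatorIsPositive} the complete positivity of this last map gives \(\bigl\langle \left[\begin{smallmatrix}S_1 & (\psi\phi)_*T\\ ((\psi\phi)_*T)^* & S_2\end{smallmatrix}\right]_{n_l}(x_l),\,y_l\bigr\rangle\ge0\) for every \(l\), using \(x_l\ge0\) and \(y_l\ge0\). The target expression differs from this one only in the two off-diagonal corners, where \(T\) stands in place of \((\psi\phi)_*T\), the diagonal corners carrying the same \(S_1,S_2\). Hence the difference is a finite sum of pairings of the form \(\langle z,\,w-(\psi\circ\phi)(w)\rangle\), where \(z\) is a fixed matrix entry of \(T_{n_l}(x_{l,12})\in L_1(\mathcal{N})\otimes M_{n_l}\), a normal functional, and \(w\) is the corresponding entry of the matching corner of \(y_l\in\mathcal{N}\otimes M_{n_l}\). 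Each such term tends to \(0\) along the net because \(\psi_\alpha\phi_\alpha(w)\to w\) weak\(^*\); as there are only finitely many terms over \(l=1,\dots,m\), I can fix \(\alpha\) so large that the total off-diagonal error exceeds \(-\varepsilon\) for every \(l\) simultaneously, and the maps \(S_1,S_2\) then have all the required properties.

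The main obstacle is conceptual rather than computational: recognizing that injectivity supplies exactly the finite-dimensional (matrix) approximation needed to transport Proposition~\ref{TdominatedByS1AndS2} from \(L_1(M_k)\) to \(L_1(\mathcal{N})\). The two delicate points are the exact norm bookkeeping---one must use \emph{contractive} completely positive factorizations so that \(\|\phi_*\|,\|\psi_*\|\le1\) and the bound \(\|S_i\|\le\|T\|_{cob}\) survives with no extra constant---and the verification that replacing \(T\) by \((\psi\phi)_*T\) costs only an arbitrarily small amount in the finitely many fixed weak\(^*\) pairings, which is precisely where the point--weak\(^*\) convergence of \(\psi_\alpha\phi_\alpha\) enters.
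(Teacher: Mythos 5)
Your proposal is correct and follows essentially the same route as the paper: the paper likewise uses injectivity (via semidiscreteness, citing Blackadar, Theorem IV.2.4.4) to get completely positive contractions \(\sigma_1\colon\mathcal{N}\to M_k\) and \(\sigma_2\colon M_k\to\mathcal{N}\) approximating the finitely many pairings to within \(\varepsilon\), applies Proposition~\ref{TdominatedByS1AndS2} to \(\sigma_2^t\circ T\) (your \(\psi_*\circ T\)), and sets \(S_i=\sigma_1^t\circ S_i'\) (your \(\phi_*\circ\tilde S_i\)), splitting each pairing into a nonnegative main term plus an off-diagonal error controlled by the approximation. The only point worth making explicit, as the paper does, is that the off-diagonal error terms come in conjugate pairs (since \(x_l,y_l\ge 0\) force \(x_{l,ij21}^*=x_{l,ji12}\) and \(y_{l,ij12}^*=y_{l,ji21}\)), so the total error is real and the bound \(\ge-\varepsilon\) is meaningful.
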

\begin{proof}
	Let \(x_l = [x_{l,ij}]\), where \(x_{l,ij} \in L_p(\mathcal{M}) \otimes M_2\), and \(y_l = [y_{l,ij}]\), where \(y_{l,ij} \in \mathcal{N} \otimes M_2\), and for \(\ i, j = 1,\cdots n_l, \ l=1,\cdots,m\)
	\begin{equation*}
		x_{l,ij} = \begin{bmatrix}
		x_{l,ij11} & x_{l,ij12} \\  x_{l,ij21}  & x_{l,ij22} \end{bmatrix}, y_{l,ij} = \begin{bmatrix} 
		y_{l,ij11} & y_{l,ij12} \\  y_{l,ij21}  & y_{l,ij22} \end{bmatrix}. 
	\end{equation*}
	Since \(\mathcal{N}\) is injective, there exist by \cite{Bl}, Theorem IV.2.4.4, \(k \in \mathbb{N}\) and completely positive contractions \(\sigma_1: \mathcal{N} \rightarrow M_k\) and \(\sigma_2: M_k \rightarrow \mathcal{N}\) such that \(\sigma_1\) is continuous in the \(\sigma\)-weak topology and 
	\begin{equation}\label{approximateT}
	\begin{split}
		\Bigg|  \sum_{i,j=1}^{n_l}& ( <T(x_{l,ij21}^*)^*,y_{l,ji12} - \sigma_2 \circ \sigma_1 (y_{l,ji12})>   \\
		& + <T(x_{l,ij12}),y_{l,ji21} - \sigma_2 \circ \sigma_1 (y_{l,ji21}) > ) \Bigg| < \varepsilon, \ l = 1,\cdots,m.
	\end{split}
	\end{equation}

	Since \(x_l\) and \(y_l\) are positive, we have \(x_{l,ij21}^* = x_{l,ji12}\) and \(y_{l,ij12}^* = y_{l,ji21}\) for all \(l=1,\cdots,m\). Thus, \(<T(x_{l,ij21}^*)^*,y_{l,ji12}>\) and \(<T(x_{l,ji12}),y_{l,ij21}>\) are conjugate complex numbers for all \(i,j = 1, \cdots,n_l, \ l = 1,\cdots,m\), and the sum on the left side of this equation is a real number. Hence the sum is greater than \(-\varepsilon\). 
	
	Since \(\sigma_1\) is continuous in the \(\sigma\)-weak topology, its adjoint map \(\sigma_1^t\) maps \(L_1(M_k)\) to \(L_1(\mathcal{N})\) and is a completely positive contraction. Similarly, the adjoint map \(\sigma_2^t\) maps \(L_1(\mathcal{N})\) to \(L_1(M_k)\) and is a completely positive contraction, and \(\sigma_2^t \circ T\) is completely order bounded with \(\|\sigma_2^t \circ T\|_{cob} \le \|T\|_{cob}\). Now, we apply Proposition \ref{TdominatedByS1AndS2} and get linear maps \(S_1' , S_2': L_p(\mathcal{M}) \rightarrow M_k\) such that \(\|S_1'\|, \|S_2'\| \le \|T\|_{cob}\) and \(\left[ \begin{smallmatrix} S_1' & \sigma_2^t \circ T \\  \sigma_2^t \circ T^*  & S_2' \end{smallmatrix} \right]\) is completely positive. We put \(S_1 = \sigma_1^t \circ S_1'\) and \(S_2 = \sigma_1^t \circ S_2'\). Then \(\|S_1\|, \|S_2\| \le \|T\|_{cob}\), \(S_1\) and \(S_2\) are completely positive, and for all \(l=1,\cdots,m\)
	\begin{equation}\label{sumWithSigmas}
	\begin{split}
		\left<\begin{bmatrix}
		S_1 & T \\ T^* & S_2
		\end{bmatrix}_{n_l} (x_l), y_l \right> &= \sum_{i,j=1}^{n_l}  \big(<S_1(x_{l,ij11}),y_{l,ji11}> \\
		& \qquad\qquad + <T(x_{l,ij12}), y_{l,ji21}> \\
		& \qquad\qquad + <T(x_{l,ij21}^*)^*, y_{l,ji12}> \\
		& \qquad\qquad + <S_2(x_{l,ij22}), y_{l,ij22}> \big) \\
		&= \sum_{i,j=1}^{n_l} \big( <\sigma_1^t \circ S_1'(x_{l,ij11}),y_{l,ji11}> \\
		& \qquad \qquad + <T(x_{l,ij12}), \sigma_2 \circ \sigma_1(y_{l,ji21})>  \\
		& \qquad \qquad + <T(x_{l,ij21}^*)^*, \sigma_2 \circ \sigma_1 (y_{l,ji12})> \\
		& \qquad \qquad + <\sigma_1^t \circ S_2'(x_{l,ij22}), y_{l,ij22}> \big) \\
		& \quad + \sum_{i,j=1}^{n_l} \big( <T(x_{l,ij21}^*)^*,y_{l,ji12} - \sigma_2 \circ \sigma_1 (y_{l,ji12})> \\
		& \qquad \qquad + <T(x_{l,ij12}),y_{l,ji21} - \sigma_2 \circ \sigma_1 (y_{l,ji21}) > \big).
	\end{split}
	\end{equation}
	Now the first sum of the last expression in (\ref{sumWithSigmas}) is equal to
	\begin{equation*}
		\left<(\sigma_1^t)_{2n_l} \circ \begin{bmatrix}
		S_1' & \sigma_2^t \circ T \\ (\sigma_2^t \circ T)^* & S_2'
		\end{bmatrix}_{n_l} (x_l), y_l \right>,
	\end{equation*}
	which is greater than or equal to 0, because it is a composition of two completely positive maps applied to a positive element. The second sum is greater than \(-\varepsilon\) by \eqref{approximateT}. This finishes the proof.
\end{proof}
\begin{thm}
	Let \(1 \le p \le \infty, \mathcal{M}\) and \(\mathcal{N}\) be von Neumann algebras, \(\mathcal{N}\) injective, and \(T: L_p(\mathcal{M}) \rightarrow L_1(\mathcal{N})\) be a completely order bounded map. Then there exist linear maps \(S_1, S_2: L_p(\mathcal{M}) \rightarrow L_1(\mathcal{N}), \|S_1\|, \|S_2\| \le \|T\|_{cob}\) such that the map
	\begin{equation*}
	\begin{split}
		\Phi: &~L_p(\mathcal{M}) \otimes M_2 \rightarrow L_1(\mathcal{N}) \otimes M_2 \\
		&\begin{bmatrix}
			x_{11} & x_{12} \\ x_{21} & x_{22}
		\end{bmatrix} \mapsto
		\begin{bmatrix}
		S_1(x_{11}) & T(x_{12}) \\ T(x_{21}^*)^* & S_2(x_{22})
		\end{bmatrix}
	\end{split}
	\end{equation*}
	is completely positive.
\end{thm}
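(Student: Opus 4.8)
The plan is to upgrade the approximate positivity of the preceding lemma to exact complete positivity by a weak\(^*\)-compactness and finite-intersection argument, in the spirit of the proofs of Theorem \ref{infIsMinInNorm} and Theorem \ref{orderIntervalToOrderInterval}.

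First I would fix the ambient compact space. Since the weak\(^*\)-limit of normal functionals need not be normal, I would not work directly with maps into \(L_1(\mathcal{N})\); instead I would enlarge the target to \(\mathcal{N}^*\) and consider the set \(\mathcal{K}\) of all pairs \((S_1,S_2)\) of completely positive maps \(S_i: L_p(\mathcal{M}) \to \mathcal{N}^*\) with \(\|S_i\| \le \|T\|_{cob}\), topologized by pointwise weak\(^*\)-convergence, i.e.\ \((S_1,S_2)\) is identified with the family of scalars \((<S_i(a),b>)_{a\in L_p(\mathcal{M}),\, b\in\mathcal{N},\, i=1,2}\). Each such scalar lies in the disk of radius \(\|T\|_{cob}\|a\|_p\|b\|_\infty\), so by the Banach--Alaoglu theorem (equivalently, Tychonoff applied to the product of these disks) the ambient product is compact. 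I would then check that \(\mathcal{K}\) is closed: linearity in \(a\) and in \(b\) is a closed condition on the coordinates, the norm bound is built into the disks, and complete positivity of each \(S_i\) is, by Proposition \ref{operatorIsPositive}, the intersection over all \(m\) and all positive \(x' \in (L_p(\mathcal{M})\otimes M_m)_+\), \(y'\in(\mathcal{N}\otimes M_m)_+\) of the closed conditions \(<(S_i)_m(x'),y'> \ge 0\). Hence \(\mathcal{K}\) is weak\(^*\)-compact.

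Next, for every \(n\in\mathbb{N}\), every \(x\in(L_p(\mathcal{M})\otimes M_2\otimes M_n)_+\), every \(y\in(\mathcal{N}\otimes M_2\otimes M_n)_+\), and every \(\varepsilon>0\) I would set
\[
	F(n,x,y,\varepsilon) = \left\{(S_1,S_2)\in\mathcal{K} \ \middle|\ \left< \begin{bmatrix} S_1 & T \\ T^* & S_2\end{bmatrix}_n(x),\, y\right> \ge -\varepsilon\right\}.
\]
Expanding the block pairing shows that \((S_1,S_2)\mapsto <\left[\begin{smallmatrix} S_1 & T \\ T^* & S_2\end{smallmatrix}\right]_n(x),y>\) is a fixed finite linear combination of the coordinate functions \(<S_i(\cdot),\cdot>\) plus a constant coming from \(T\), hence continuous; so each \(F(n,x,y,\varepsilon)\) is closed in \(\mathcal{K}\) and thus compact. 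The finite intersection property is exactly the content of the preceding lemma: given finitely many data \((n_l,x_l,y_l,\varepsilon_l)\), put \(\varepsilon=\min_l\varepsilon_l\) and apply the lemma to the family \((x_l,y_l)\) with this \(\varepsilon\) to obtain a single pair \((S_1,S_2)\) (a fortiori valued in \(\mathcal{N}^*\)) lying in every \(F(n_l,x_l,y_l,\varepsilon_l)\). By compactness the total intersection over all \((n,x,y,\varepsilon)\) is non-empty; since \(\bigcap_{\varepsilon>0}F(n,x,y,\varepsilon)\) is the set where the pairing is \(\ge 0\), any pair \((S_1,S_2)\) in the total intersection satisfies \(<\Phi_n(x),y>\ge 0\) for all \(n\) and all positive \(x,y\), i.e.\ \(\Phi\) (valued in \(\mathcal{N}^*\)) is completely positive, with \(\|S_i\|\le\|T\|_{cob}\).

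Finally I would return from \(\mathcal{N}^*\) to \(L_1(\mathcal{N})\), exactly as in Theorem \ref{orderIntervalToOrderInterval}: let \(z\in\mathcal{N}^{**}\) be the central projection with \(z\mathcal{N}^*=L_1(\mathcal{N})\), and replace \(S_i\) by \(S_i'\), where \(S_i'(a)=zS_i(a)\). Multiplication by the central projection \(z\) is a completely positive contraction, so each \(S_i'\) is completely positive, satisfies \(\|S_i'\|\le\|T\|_{cob}\), and takes values in \(L_1(\mathcal{N})\). Moreover, writing \(P\) for the completely positive map \(\mathcal{N}^*\otimes M_2 \to L_1(\mathcal{N})\otimes M_2\), \(b\mapsto zb\), we have \(\Phi'=P\circ\Phi\); since \(T\) already maps into \(L_1(\mathcal{N})=z\mathcal{N}^*\), the off-diagonal entries are unchanged, so \(\Phi'=\left[\begin{smallmatrix}S_1' & T\\ T^* & S_2'\end{smallmatrix}\right]\) is again completely positive as a composition of completely positive maps. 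Renaming \(S_i'\) as \(S_i\) gives the theorem. The main obstacle is the compactness set-up---specifically the need to pass to \(\mathcal{N}^*\) because completely positive contractions into \(L_1(\mathcal{N})\) are not weak\(^*\)-closed---together with verifying that complete positivity and the norm bound survive the weak\(^*\)-limit; once that is in place, the finite intersection property is handed to us verbatim by the previous lemma.
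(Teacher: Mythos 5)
Your proposal is correct and follows essentially the same route as the paper: both enlarge the target from \(L_1(\mathcal{N})\) to \(\mathcal{N}^*\), exploit Banach--Alaoglu compactness of the ball \(\|S_i\| \le \|T\|_{cob}\) of pairs of maps in the pointwise weak\(^*\)-topology, derive the finite intersection property of the sets of \(\varepsilon\)-approximately positive pairs directly from the preceding lemma, and finally return to \(L_1(\mathcal{N})\) via the central projection \(z \in \mathcal{N}^{**}\), using \(zT = T\) to leave the off-diagonal entries untouched. The only difference is organizational and immaterial: you build complete positivity and the norm bound into the ambient compact set \(\mathcal{K}\) and get closedness of each \(F(n,x,y,\varepsilon)\) from coordinatewise continuity of the pairing, whereas the paper folds these conditions into each set \(K(X,\varepsilon)\) and checks weak\(^*\)-closedness by an explicit \(\delta\)-approximation of the real and imaginary parts.
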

\begin{proof}
	Let \(\mathcal{N}^*\) be the dual space of \(\mathcal{N}\) and \(\mathcal{B}(L_p(\mathcal{M}), \mathcal{N}^*)\) be the vector space of all bounded linear maps from \(L_p(\mathcal{M})\) to 	\(\mathcal{N}^*\). By \cite{Ta}, Theorem IV.2.3, \(\mathcal{B}(L_p(\mathcal{M}), \mathcal{N}^*)\) is isomorph to the dual space of \(L_p(\mathcal{M}) \otimes_{\gamma} \mathcal{N}\), where \(\|\cdot\|_{\gamma}\) denotes the projective tensor norm. The weak\(^*\)-topology on \(\mathcal{B}(L_p(\mathcal{M}), \mathcal{N}^*)\) is given by the seminorms \(|<S(x),y>|\), where \(x \in L_p(\mathcal{M}), y \in \mathcal{N},\) and \(S \in \mathcal{B}(L_p(\mathcal{M}), \mathcal{N}^*)\). Let \(\mathcal{U} = \lbrace S \in \mathcal{B}(L_p(\mathcal{M}), \mathcal{N}^*) \  | \ \|S\| \le \|T\|_{cob} \rbrace\). By the Banach-Alaoglu theorem \(\mathcal{U}\) is compact in the weak\(^*\)-topology. Hence \(\mathcal{U} \times \mathcal{U}\) is compact in the product weak\(^*\)-topology. For \(\varepsilon > 0, m \in \mathbb{N}, n_1,\cdots,n_m \in \mathbb{N}, l = 1, \cdots,m, X = \lbrace (x_l,y_l) | \ x_l \in (L_p(\mathcal{M}) \otimes M_2 \otimes M_{n_l})_+, y_l \in (\mathcal{N} \otimes M_2 \otimes M_{n_l})_+   \rbrace\), let
	\begin{equation*}
	\begin{split}
		K(X,\varepsilon) =  \Bigg\{ &(S_1,S_2) \in \mathcal{U} \times \mathcal{U} \ | \ S_1, S_2 \textrm{ completely positive, }  \\
		&  \left<\begin{bmatrix}
		S_1 & T \\ T^* & S_2
		\end{bmatrix}_{n_l} (x_l) , y_l \right> \ge -\varepsilon \textrm{ for all } (x_l,y_l) \in X \Bigg\}.
	\end{split}
	\end{equation*}
	We will show that \(K(X,\varepsilon)\) is closed in the weak\(^*\)-topology and hence compact. So let \((S_1,S_2)\) be in the closure of \(K(X,\varepsilon)\). Let \(\delta > 0\) and \((x_l,y_l) \in X\), where 
	\begin{equation*}
	\begin{split}
		&x_l = [x_{l,ij}], \ x_{l,ij} = \begin{bmatrix}
		x_{l,ij11} & x_{l,ij12} \\ x_{l,ij21} & x_{l,ij22}	\end{bmatrix} \in L_p(\mathcal{M}) \otimes M_2, \\
		&y_l = [y_{l,ij}], \ y_{l,ij} = \begin{bmatrix}
		y_{l,ij11} & y_{l,ij12} \\ y_{l,ij21} & y_{l,ij22}	\end{bmatrix} \in \mathcal{N} \otimes M_2.
	\end{split}
	\end{equation*}
	Then we can find \((S_{1\delta},S_{2\delta}) \in K(X,\varepsilon)\) such that
	\begin{equation*}
	\begin{split}
		&\sum_{i,j=1}^{n_l}|<S_1(x_{l,ij11})-S_{1\delta}(x_{l,ij11}), y_{l,ji11}> | < \delta \\
		&\sum_{i,j=1}^{n_l}|<S_2(x_{l,ij22})-S_{2\delta}(x_{l,ij22}), y_{l,ji22}> | < \delta,
	\end{split}
	\end{equation*}
	and therefore
	\begin{equation*}
	\begin{split}
		&\left| \left<\begin{bmatrix}
		S_1 & T \\ T^* & S_2
		\end{bmatrix}_{n_l} (x_l) , y_l \right> - \left<\begin{bmatrix}
		S_{1\delta} & T \\ T^* & S_{2\delta}
		\end{bmatrix}_{n_l} (x_l) , y_l \right> \right| \\
		= & \left| \sum_{i,j=1}^{n_l}<S_1(x_{l,ij11}) -S_{1\delta}(x_{l,ij11}),  y_{l,ji11}> \right. \\
		& \left. + \sum_{i,j=1}^{n_l}<S_2(x_{l,ij22}) -S_{2\delta}(x_{l,ij22}), y_{l,ji22}>\right| < 2\delta
	\end{split}
	\end{equation*}
	Now \(\left<\left[\begin{smallmatrix}
	S_{1\delta} & T \\ T^* & S_{2\delta}
	\end{smallmatrix}\right]_{n_l} (x_l) , y_l \right>\) is a positive number. Hence the imaginary part
	\begin{equation*}
		\left| Im \left<\begin{bmatrix}
		S_1 & T \\ T^* & S_2
		\end{bmatrix}_{n_l} (x_l) , y_l \right> \right| < 2\delta
	\end{equation*}
	and the real part
	\begin{equation*}
		Re \left<\begin{bmatrix}
		S_1 & T \\ T^* & S_2
		\end{bmatrix}_{n_l} (x_l) , y_l \right> > \left<\begin{bmatrix}
		S_{1\delta} & T \\ T^* & S_{2\delta}
		\end{bmatrix}_{n_l} (x_l) , y_l \right> - 2\delta \ge \varepsilon - 2\delta.
	\end{equation*}
	Since \(\delta\) is arbitrary, \(\left<\left[\begin{smallmatrix}
		S_1 & T \\ T^* & S_2
	\end{smallmatrix}\right]_{n_l} (x_l) , y_l \right>\) is a real number which is greater than or equal to \(-\varepsilon\). Similarly, we show that \(\|S_1\|, \|S_2\| \le \|T\|_{cob}\) and \(S_1, S_2\) are completely positive. So \((S_1,S_2) \in K(X,\varepsilon)\).
	
	The sets \(K(X,\varepsilon), m \in \mathbb{N}, X= \{(x_1,y_1),\cdots(x_m,y_m)\},  \varepsilon > 0\) have the finite intersection property. If we have sets \(K(X_l,\varepsilon_l), \ l=1,\cdots,m\), we put \(X = \bigcup_{l=1}^m X_l\) and \(\varepsilon = \min \lbrace \varepsilon_1,\cdots,\varepsilon_m \rbrace\). Then \(K(x,\varepsilon) \subseteq \cap_{l=1}^m K(X_l,\varepsilon_l)\). Combining the finite intersection property and the compactness in the weak*-topology, we get 
	\begin{equation*}
		\bigcap_{X,\varepsilon} K(X,\varepsilon) \ne \emptyset.
	\end{equation*}
	Any pair \((S_1,S_2)\) in this intersection has the property that \(\left[\begin{smallmatrix}
	S_1 & T \\ T^* & S_2
	\end{smallmatrix}\right]\) is completely positive and \(\|S_1\|, \|S_2\| \le \|T\|_{cob}\). Since there is a projection \(z \in \mathcal{N}^{**}\) which works as a  projection from \(\mathcal{N}^*\) to \(L_1(\mathcal{N})\), the maps \(S_1' = zS_1\) and \(S_2 = zS_2\) have the desired properties.
\end{proof}

We close this section with an example of a completely order bounded map which is not decomposable when \(q > 2p\). For \(m \in \mathbb{N}\), let \(l_q^m\) be the vector space \(\mathbb{C}^m\) equipped with norm \(\|c\|_q = \left( \sum_{i=1}^{m}|c_i|^q\right)^{1/q}\) where \(c = (c_1,\cdots,c_m)\). Let
\begin{equation}\label{standardBase}
	e_1,\cdots,e_m \textrm{ be the standard base of } l_q^m.
\end{equation}
Before we show our example, we need some formulas to estimate the completely order bounded norm and the decomposable norm for linear maps from \(L_p(\mathcal{M})\) to \(l_q^m\).
\begin{lem}\label{upperBoundForTcob}
	Let \(1 \le p < \infty, 1 \le q < \infty, \frac{1}{p} + \frac{1}{p'} = 1, m \in \mathbb{N}, g_1,\cdots,g_m \in L_{p'}(\mathcal{M})\) and 
	\begin{equation*}
	T: L_p(\mathcal{M}) \rightarrow l_q^m, \quad f \mapsto \sum_{k=1}^{m}<f,g_k>e_k.
	\end{equation*}
	Then
	\begin{equation}\label{upperBoundTcob}
	 \|T\|_{cob} \le \sup \left\lbrace \left( \sum_{k=1}^{m} \|ag_kb\|_1^q \right) ^{1/q} \mid a,b \in L_{2p'}(\mathcal{M}), \|a\|_{2p'}, \|b\|_{2p'} \le 1 \right\rbrace.
	\end{equation}
\end{lem}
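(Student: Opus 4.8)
The plan is to identify $l_q^m$ with $L_q(\mathbb{C}^m)$ for the commutative von Neumann algebra $\mathbb{C}^m$ with its canonical trace, so that the matrix norm $\|\cdot\|_{q,n}$ on $l_q^m\otimes M_n$ decouples over the $m$ coordinates. For $x=[x_{ij}]\in L_p(\mathcal{M})\otimes M_n$ the image $T_n(x)$ is the element of $l_q^m\otimes M_n$ whose $k$-th coordinate is the scalar matrix $X_k=[\langle x_{ij},g_k\rangle]_{ij}\in M_n$. A positive element of $l_q^m\otimes M_{2n}$ is just an $m$-tuple of positive $2n\times 2n$ matrices, so the block-positivity condition in the definition of $\|T_n(x)\|_{q,n}$ is equivalent to requiring, for each $k$, that $\left[\begin{smallmatrix} F_k 1_n & X_k \\ X_k^* & G_k 1_n \end{smallmatrix}\right]\ge 0$, i.e. $\|X_k\|_\infty^2\le F_kG_k$ by the Schur complement. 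Choosing $F_k=G_k=\|X_k\|_\infty$ gives an admissible pair $(F,G)\in (l_q^m)_+\times(l_q^m)_+$ with $\|F\|_q=\|G\|_q=\big(\sum_k\|X_k\|_\infty^q\big)^{1/q}$, whence
\[ \|T_n(x)\|_{q,n}\le\Big(\sum_{k=1}^m\|X_k\|_\infty^q\Big)^{1/q}. \]

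Next I would estimate each $\|X_k\|_\infty$ by a single pair $(a,b)$ independent of $k$. Assuming $\|x\|_{p,n}\le 1$, Theorem \ref{infIsMinInNorm} provides $f,g\in L_p(\mathcal{M})_+$ with $\|f\|_p=\|g\|_p=\|x\|_{p,n}\le 1$ and $\left[\begin{smallmatrix} f\otimes 1_n & x \\ x^* & g\otimes 1_n \end{smallmatrix}\right]\ge 0$, and Theorem \ref{xAsBoundedOp} then yields $y=[y_{ij}]\in\mathcal{M}\otimes M_n$ with $\|y\|_\infty\le 1$ and $x_{ij}=f^{1/2}y_{ij}g^{1/2}$. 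For unit vectors $\xi,\eta\in\mathbb{C}^n$ one computes $\langle X_k\xi,\eta\rangle=\operatorname{tr}(wg_k)$ with $w=\sum_{ij}\bar\eta_i\xi_jx_{ij}=f^{1/2}Yg^{1/2}$, where $Y=\sum_{ij}\bar\eta_i\xi_jy_{ij}$ is a matrix coefficient of $y$ and therefore $\|Y\|_\infty\le\|y\|_\infty\le 1$. Using cyclicity of the trace and the Hölder relation $\frac{1}{2p}+\frac{1}{p'}+\frac{1}{2p}=1$,
\[ |\langle X_k\xi,\eta\rangle|=|\operatorname{tr}(Y\,g^{1/2}g_kf^{1/2})|\le\|Y\|_\infty\,\|g^{1/2}g_kf^{1/2}\|_1\le\|a g_k b\|_1, \]
where $a=g^{1/2}$ and $b=f^{1/2}$ lie in $L_{2p}(\mathcal{M})$ with $\|a\|_{2p}=\|g\|_p^{1/2}\le 1$ and $\|b\|_{2p}=\|f\|_p^{1/2}\le 1$. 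Taking the supremum over $\xi,\eta$ gives $\|X_k\|_\infty\le\|a g_k b\|_1$ with one and the same pair $(a,b)$ for every $k$.

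Combining the two bounds yields $\|T_n(x)\|_{q,n}\le\big(\sum_k\|a g_k b\|_1^q\big)^{1/q}$, which is dominated by the supremum on the right-hand side of \eqref{upperBoundTcob}; taking the supremum over all $n\in\mathbb{N}$ and all $x$ with $\|x\|_{p,n}\le 1$ gives the asserted bound on $\|T\|_{cob}$. The step needing the most care is the first reduction: one must check that positivity in $l_q^m\otimes M_{2n}$ is genuinely coordinatewise and that the scalar Schur complement is saturated by $F_k=G_k=\|X_k\|_\infty$, so that the $\ell_q$-sum of operator norms really is an upper bound for $\|T_n(x)\|_{q,n}$. Everything after that is bookkeeping built from the norm-one factorization of Theorems \ref{infIsMinInNorm} and \ref{xAsBoundedOp}, the matrix-coefficient inequality $\|Y\|_\infty\le\|y\|_\infty$, and a single application of Hölder that places $a$ and $b$ in $L_{2p}(\mathcal{M})$.
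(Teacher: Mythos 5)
Your proposal is correct and takes essentially the same route as the paper: both factor \(x=(f^{\frac{1}{2}}\otimes 1_n)\,y\,(g^{\frac{1}{2}}\otimes 1_n)\) via Theorems \ref{infIsMinInNorm} and \ref{xAsBoundedOp}, bound the \(k\)-th coordinate matrix of \(T_n(x)\) in operator norm by \(\|ag_kb\|_1\) (the paper by citing \(\|\varphi_k\|_{cob}=\|\varphi_k\|=\|ag_kb\|_1\) for the functional \(\varphi_k=\langle ag_kb,\cdot\rangle\) applied to the contraction \(y\), you by the equivalent hands-on compression estimate \(\|Y\|_\infty\le\|y\|_\infty\) for matrix coefficients), and then assemble the \(\ell_q\)-sum bound coordinatewise, which is exactly the paper's positive block matrix \(\sum_{k} e_k\otimes\left[\begin{smallmatrix}\|\varphi_k\|1_n & (\varphi_k)_n(y)\\ (\varphi_k)_n(y)^* & \|\varphi_k\|1_n\end{smallmatrix}\right]\ge 0\). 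One remark: your \(a=g^{\frac{1}{2}}\), \(b=f^{\frac{1}{2}}\) lie in \(L_{2p}(\mathcal{M})\) rather than in the \(L_{2p'}(\mathcal{M})\) of the printed statement, but that is a typo in the paper (only \(\frac{1}{2p}+\frac{1}{p'}+\frac{1}{2p}=1\) makes \(ag_kb\in L_1(\mathcal{M})\), and the paper's own application in Proposition \ref{TcobEquals2} takes \(a,b\in L_{2p}\) with \(\|a\|_{2p},\|b\|_{2p}\le 1\)), so you have proved the intended statement.
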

\begin{proof}
	Let \(n \in \mathbb{N} \) and \( x \in L_p(\mathcal{M}) \otimes M_n, \|x\|_{p,n} \le 1\). By Theorem \ref{infIsMinInNorm} and Theorem \ref{xAsBoundedOp}, there exist \(a, b \in L_{2p'}(\mathcal{M}), \|a\|_{2p'}, \|b\|_{2p'} \le 1, y \in \mathcal{M}\otimes M_n, \|y\|_{\infty}\le 1\) such that \(x = (b\otimes 1_n)y(a\otimes 1_n)\). Let \(\varepsilon_{ij}\) be as in \eqref{definitionEpsilonij} and \(y = \sum_{i,j=1}^{n} y_{ij} \otimes \varepsilon_{ij}.\)
	Then we have
	\begin{equation*}
		T_n(x) = \sum_{k=1}^{m}\sum_{i,j=1}^{n}<ag_kb,y_{ij}>e_k \otimes \varepsilon_{ij}.
	\end{equation*}
	For \(k = 1, \cdots,m\) let \(\varphi_k: \mathcal{M} \to \mathbb{C}, f \mapsto <ag_kb,f>.\)
	Then
 \(\varphi_k \)  is completely order bounded and \(\|\varphi_k\|_{cob} = \|\varphi_k\| = \|ag_kb\|_1 \). Since \(\|y\|_{\infty} \le 1\), we have
	\begin{equation*}
		0 \le \begin{bmatrix} \|\varphi_k\|1_n & (\varphi_k)_n(y) \\
		(\varphi_k)_n(y)^* & \|\varphi_k\|1_n \end{bmatrix}.
	\end{equation*}
	Then we get
	\begin{equation*}
	\begin{split}
		0 &\le \sum_{k=1}^{m} e_k \otimes \begin{bmatrix} \|\varphi_k\|1_n & (\varphi_k)_n(y) \\
		(\varphi_k)_n(y)^* & \|\varphi_k\|1_n \end{bmatrix} \\
		&= \begin{bmatrix} \sum_{k=1}^{m}\|\varphi_k\|e_k \otimes 1_n & T_n(x) \\ T_n(x)^* & \sum_{k=1}^{m}\|\varphi_k\|e_k \otimes 1_n \end{bmatrix}
	\end{split}
	\end{equation*}
	Hence
	\begin{equation*}
		\|T_n(x)\|_{q,n} \le \|\sum_{k=1}^{m}\|ag_kb\|_1e_k\|_q = (\sum_{k=1}^{m} \|ag_kb\|_1^q)^\frac{1}{q}
	\end{equation*}
	which proves \eqref{upperBoundTcob}.
\end{proof}
\begin{lem}\label{sumAlphaEpsiloniiLessAlpha}
	Let \(2 \le q < \infty\), \(\alpha \in M_n\), and  \(\varepsilon_{ii}\) be as in \eqref{definitionEpsilonij} for \(i=1,\cdots,n\). Then
	\begin{equation*}
	\sum_{i=1}^{n} \|\alpha\varepsilon_{ii}\|_q^q \le \|\alpha\|_q^q.
	\end{equation*}
\end{lem}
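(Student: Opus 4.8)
The plan is to evaluate each summand $\|\alpha\varepsilon_{ii}\|_q$ exactly. Writing $e_1,\dots,e_n$ for the standard basis of $\mathbb{C}^n$, the matrix unit $\varepsilon_{ii}$ is the rank-one projection onto $\mathbb{C}e_i$, so that $\alpha\varepsilon_{ii} = (\alpha e_i)e_i^*$ retains the $i$-th column of $\alpha$ and annihilates all others. As a rank-one operator $\alpha\varepsilon_{ii}$ has a single nonzero singular value, so all of its Schatten norms agree and equal that value; hence
\[
\|\alpha\varepsilon_{ii}\|_q = \|\alpha e_i\|_2 = \big((\alpha^*\alpha)_{ii}\big)^{1/2},
\]
independently of $q$. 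Consequently $\|\alpha\varepsilon_{ii}\|_q^q = \big((\alpha^*\alpha)_{ii}\big)^{q/2}$.

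Setting $A = \alpha^*\alpha \in (M_n)_+$ and $r = q/2$ (so $r \ge 1$ because $q \ge 2$), and recalling that $\|\alpha\|_q^q = Tr(|\alpha|^q) = Tr(A^{q/2})$, the asserted inequality reduces to the purely matrix-analytic statement
\[
\sum_{i=1}^n (A_{ii})^r \le Tr(A^r) \qquad (A \ge 0,\ r \ge 1).
\]
This is a majorization inequality: the diagonal of a positive semidefinite matrix is majorized by its spectrum, and $t \mapsto t^r$ is convex for $r \ge 1$.

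To keep the argument self-contained I would diagonalize $A = UDU^*$ with $U$ unitary and $D = \textrm{diag}(\lambda_1,\dots,\lambda_n)$ the eigenvalues of $A$, so that $A_{ii} = \sum_k |U_{ik}|^2\lambda_k$ and the matrix $(|U_{ik}|^2)_{i,k}$ is doubly stochastic (its row and column sums equal $1$ since $U$ is unitary). Convexity of $t \mapsto t^r$ together with Jensen's inequality applied row by row gives $(A_{ii})^r \le \sum_k |U_{ik}|^2\lambda_k^r$; summing over $i$ and using that the columns of the doubly stochastic matrix also sum to $1$ yields $\sum_i (A_{ii})^r \le \sum_k \lambda_k^r = Tr(A^r)$, which is exactly what is needed. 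The only genuine content is this final reduction to a convexity estimate; the identification of $\|\alpha\varepsilon_{ii}\|_q$ with a column norm and the passage to $A = \alpha^*\alpha$ are routine, so I anticipate no serious obstacle beyond correctly invoking the doubly stochastic/Jensen step.
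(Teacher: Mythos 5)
Your proposal is correct and follows essentially the same route as the paper's proof: both reduce \(\|\alpha\varepsilon_{ii}\|_q^q\) to \(\big((\alpha^*\alpha)_{ii}\big)^{q/2}\) via \(|\alpha\varepsilon_{ii}|^2 = (\alpha^*\alpha)_{ii}\,\varepsilon_{ii}\), then diagonalize \(\alpha^*\alpha\) with a unitary \(u\), use that \(\big(|u_{ik}|^2\big)\) has rows and columns summing to \(1\), and apply Jensen's inequality for the convex function \(t \mapsto t^{q/2}\). Your explicit framing of the diagonal-majorization step via double stochasticity is a slightly cleaner packaging of the same computation, but there is no substantive difference.
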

\begin{proof}
	Let \(\alpha^*\alpha = \beta = [\beta_{ij}]_{i,j=1}^n \). Let \( |\alpha\varepsilon_{ii}|\) be the positive matrix of the polar decomposition of \(\alpha\varepsilon_{ii} \). Then \(|\alpha\varepsilon_{ii}|^2 = \varepsilon_{ii} \beta \varepsilon_{ii} = \beta_{ii}\varepsilon_{ii} \) which implies \(|\alpha\varepsilon_{ii}|^q = \beta_{ii}^{q/2}\varepsilon_{ii} \) and
	\begin{equation*}
	\sum_{i=1}^{n} \|\alpha\varepsilon_{ii}\|_q^q = \sum_{i=1}^{n} Tr(\beta_{ii}^{q/2}\varepsilon_{ii}) = \sum_{i=1}^{n} \beta_{ii}^{q/2}. 
	\end{equation*}
	Since \(\beta\) is positive, it has eigenvalues \(\lambda_1, \cdots, \lambda_n \ge 0\), and there is a unitary matrix \(u=[u_{ij}]\in M_n\) such that \(\beta = u^*diag(\lambda_1,\cdots,\lambda_n)u \). Especially, we have \(\beta_{ii} = \sum_{l=1}^{n}|u_{li}|^2\lambda_l\). From \(u\) being unitary, it follows that \(\sum_{i=1}^{n}|u_{li}|^2 = 1 \) for \(l=1,\cdots,n\). Since \(q \ge 2\), the function \(\phi:[0,\infty]\rightarrow \mathbb{R}, t\mapsto t^{q/2}\) is convex. Hence we get
	\begin{equation*}
	\begin{split}
	\sum_{i=1}^{n} \beta_{ii}^{q/2} &= \sum_{i=1}^{n}\phi\Big(\sum_{l=1}^{n}\lambda_l|u_{li}|^2\Big)	\le
	\sum_{i=1}^{n} \sum_{l=1}^{n}|u_{li}|^2\phi(\lambda_l) \\
	&= \sum_{l=1}^{n} \lambda_l^{q/2} \sum_{i=1}^{n}|u_{li}|^2 =
	\sum_{l=1}^{n} \lambda_l^{q/2} = Tr(\beta^{q/2}) = \|\alpha\|_q^q. \qedhere
	\end{split}
	\end{equation*}
\end{proof}
For \(n \in \mathbb{N}\) let \(\varepsilon_{ij} \) be as in \eqref{definitionEpsilonij} and \(e_i\) as in \eqref{standardBase}. Then we define the linear map
\begin{equation}\label{definitionT}
	T: L_p(M_n) \rightarrow l_n^q, \quad T(x) = \sum_{i=1}^{n}Tr((\varepsilon_{1i}+\varepsilon_{i1})x)e_i.
\end{equation}
\begin{prop}\label{TcobEquals2}
	Let \(1 \le p < \infty, 2p < q < \infty\), and \(T\) as in \eqref{definitionT}. Then \(\|T\|_{cob} \le 2\).
\end{prop}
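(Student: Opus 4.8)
The plan is to apply Lemma \ref{upperBoundForTcob}. Writing the duality as $<f,g>=Tr(fg)$ on $M_n$, the map in \eqref{definitionT} is exactly $T(f)=\sum_{i=1}^n<f,g_i>e_i$ with $g_i=\varepsilon_{1i}+\varepsilon_{i1}\in M_n=L_{p'}(M_n)$, since $<x,\varepsilon_{1i}+\varepsilon_{i1}>=Tr((\varepsilon_{1i}+\varepsilon_{i1})x)$. Thus it suffices to bound, by $2$, the supremum of $\big(\sum_{i=1}^n\|a(\varepsilon_{1i}+\varepsilon_{i1})b\|_1^q\big)^{1/q}$ over all $a,b$ in the unit ball of $L_{2p}(M_n)$. (The relevant index is $2p$: the factorization $x=(f^{\frac12}\otimes 1_N)y(g^{\frac12}\otimes 1_N)$ furnished by Theorem \ref{infIsMinInNorm} and Theorem \ref{xAsBoundedOp} feeds $f^{\frac12},g^{\frac12}$, which satisfy $\|f^{\frac12}\|_{2p}=\|f\|_p^{\frac12}\le 1$, into the estimate.)

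The computational heart is to split $a(\varepsilon_{1i}+\varepsilon_{i1})b=a\varepsilon_{1i}b+a\varepsilon_{i1}b$ into two rank-one operators. Since a rank-one operator has trace norm equal to the product of the Euclidean norms of its defining column and row, and since these norms are exactly the (index-independent) Schatten norms of the corresponding rank-one matrices, one gets $\|a\varepsilon_{1i}b\|_1=\|a\varepsilon_{11}\|_q\,\|\varepsilon_{ii}b\|_q$ and $\|a\varepsilon_{i1}b\|_1=\|a\varepsilon_{ii}\|_q\,\|\varepsilon_{11}b\|_q$. The triangle inequality for $\|\cdot\|_1$ followed by Minkowski's inequality in $\ell^q$ then yields
\[
\Big(\sum_{i=1}^{n}\|a(\varepsilon_{1i}+\varepsilon_{i1})b\|_1^q\Big)^{1/q}
\le \|a\varepsilon_{11}\|_q\Big(\sum_{i=1}^{n}\|\varepsilon_{ii}b\|_q^q\Big)^{1/q}
+ \|\varepsilon_{11}b\|_q\Big(\sum_{i=1}^{n}\|a\varepsilon_{ii}\|_q^q\Big)^{1/q}.
\]
I would then bound the four factors separately. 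The prefactors are controlled by the ideal property, $\|a\varepsilon_{11}\|_q=\|a\varepsilon_{11}\|_{2p}\le\|a\|_{2p}\,\|\varepsilon_{11}\|_\infty\le 1$ and likewise $\|\varepsilon_{11}b\|_q\le 1$. For the two sums, Lemma \ref{sumAlphaEpsiloniiLessAlpha} gives $\sum_i\|a\varepsilon_{ii}\|_q^q\le\|a\|_q^q$ and, applied to $b^*$ (using $\|\varepsilon_{ii}b\|_q=\|b^*\varepsilon_{ii}\|_q$ and $\|b^*\|_q=\|b\|_q$), $\sum_i\|\varepsilon_{ii}b\|_q^q\le\|b\|_q^q$; Schatten monotonicity then gives $\|a\|_q\le\|a\|_{2p}\le1$ and $\|b\|_q\le\|b\|_{2p}\le1$. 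Hence each of the two terms on the right is at most $1$, the supremum is at most $2$, and Lemma \ref{upperBoundForTcob} yields $\|T\|_{cob}\le 2$.

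The main obstacle is the bookkeeping of exponents rather than any deep difficulty. One must (i) convert the Euclidean norms of rows and columns into Schatten-$q$ norms of rank-one matrices, so that the power $q$ matches the Schatten index and Lemma \ref{sumAlphaEpsiloniiLessAlpha} becomes applicable, and (ii) recognize that the single hypothesis $q>2p$ is exactly calibrated to supply the two facts the argument needs: $q\ge 2$ (making Lemma \ref{sumAlphaEpsiloniiLessAlpha} available) and $\|\cdot\|_q\le\|\cdot\|_{2p}$ on $M_n$ (Schatten monotonicity). Note that using the naive estimate $a=b=\mathbf 1$ would give the value $2n^{1/q}$, which diverges in $n$; it is precisely the distribution of mass captured by Lemma \ref{sumAlphaEpsiloniiLessAlpha} together with $q>2p$ that produces the $n$-independent bound $2$. (Equivalently, one may decompose $T=T_1+T_2$ into its first-column and first-row parts and bound $\|T_1\|_{cob},\|T_2\|_{cob}\le 1$ using subadditivity of $\|\cdot\|_{cob}$, which follows from the triangle inequality for $\|\cdot\|_{q,n}$.)
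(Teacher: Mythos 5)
Your proof is correct and follows essentially the same route as the paper: reduce via Lemma \ref{upperBoundForTcob} (with the index $2p$, as you rightly note), split $a(\varepsilon_{1i}+\varepsilon_{i1})b$ into the two rank-one families, apply Minkowski in $\ell^q$, and control the resulting sums with Lemma \ref{sumAlphaEpsiloniiLessAlpha} together with the monotonicity $\|\cdot\|_q\le\|\cdot\|_{2p}$ coming from $q>2p\ge 2$. The only difference is cosmetic: where the paper estimates $\|a\varepsilon_{i1}b\|_1\le\|a\varepsilon_{ii}\|_q\|\varepsilon_{i1}b\|_{q'}$ and then invokes the generalized H\"older inequality with an auxiliary exponent $s$, you use the exact rank-one identity $\|a\varepsilon_{i1}b\|_1=\|a\varepsilon_{ii}\|_q\|\varepsilon_{11}b\|_q$, which is a slightly cleaner way of obtaining the same per-term bound.
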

\begin{proof}
	Let \(\varepsilon > 0 \). By Lemma \ref{upperBoundForTcob}, there exist \(a,b \in L_{2p}(M_n), \|a\|_{2p}, \|b\|_{2p} \le 1\) such that
	\begin{equation*}
		\|T\|_{cob} \le \Bigg(\sum_{i=1}^{n}\|a(\varepsilon_{i1} + \varepsilon_{1i})b\|_1^q\Bigg)^{1/q} + \varepsilon.
	\end{equation*}
	Since for all \(i=1,\cdots,n\) we have \(\|a(\varepsilon_{i1} + \varepsilon_{1i})b\|_1 \le \|a\varepsilon_{i1}b\|_1 + \|a\varepsilon_{1i}b\|_1\) we get
	\begin{equation*}
		\Bigg(\sum_{i=1}^{n}\|a(\varepsilon_{i1} + \varepsilon_{1i})b\|_1^q\Bigg)^{1/q} \le
		\Bigg(\sum_{i=1}^{n}\|a\varepsilon_{i1}b\|_1^q\Bigg)^{1/q} +
		\Bigg(\sum_{i=1}^{n}\|a\varepsilon_{1i}b\|_1^q\Bigg)^{1/q}
	\end{equation*}
	Let \(\frac{1}{q}+\frac{1}{q'} = 1\). Then \(\|a\varepsilon_{i1}b\|_1 = \|a\varepsilon_{ii} \varepsilon_{i1}b\|_1 \le \|a\varepsilon_{ii}\|_q \cdot \|\varepsilon_{i1}b\|_{q'}\). Since \(q > 2p \ge 2\) we have \(q' < 2 \le 2p\). Hence there exist a real number \(s > 1\) such that \(\frac{1}{q'} = \frac{1}{2p} + \frac{1}{s}\). By the generalized H\"older's inequality, we have\(\|\varepsilon_{i1}b\|_{q'} \le \|\varepsilon_{i1}\|_s\|b\|_{2p} \le 1\). We apply Lemma \ref{sumAlphaEpsiloniiLessAlpha} and get
	\begin{equation*}
		\sum_{i=1}^{n}\|a\varepsilon_{i1}b\|_1^q \le \sum_{i=1}^{n}\|a\varepsilon_{ii}\|_1^q \le \|a\|_q^q.
	\end{equation*}
	Since \(q > 2p \ge 2\), we have \(\|a\|_q \le \|a\|_{2p} \le 1\). Hence	
	\begin{equation*}
		\Bigg(\sum_{i=1}^{n}\|a\varepsilon_{i1}b\|_1^q\Bigg)^{1/q} \le 1.
	\end{equation*}
	Similarly, we get
	\begin{equation*}
		\Bigg(\sum_{i=1}^{n}\|a\varepsilon_{1i}b\|_1^q\Bigg)^{1/q} \le 1.
	\end{equation*}
	Since \(\varepsilon\) was arbitrary, this finishes the proof.
\end{proof}
\begin{prop}\label{TdecgreaterHtnaSqrtN}
	Let \(1 \le p < \infty,~ 1 \le q < \infty\), and \(T\) as in \eqref{definitionT}. Then \(\|T\|_{dec} \ge n^{1/2q}\).
\end{prop}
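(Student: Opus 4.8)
The plan is to argue by contradiction with the quantity to be bounded: suppose $S_1,S_2:L_p(M_n)\to l_n^q$ are completely positive maps such that the associated map $\Phi=\left[\begin{smallmatrix} S_1 & T\\ T^* & S_2\end{smallmatrix}\right]$ of the form in \eqref{definitionS1S2T} is completely positive, and set $C=\max\{\|S_1\|,\|S_2\|\}$. Since $\|T\|_{dec}$ is the infimum of such $C$, it suffices to show $C\ge n^{1/2q}$. First I would record the explicit form of $T$: for $x=[x_{ij}]\in L_p(M_n)$ one computes $T(x)_i=\operatorname{Tr}((\varepsilon_{1i}+\varepsilon_{i1})x)=x_{1i}+x_{i1}$, whence $T(\varepsilon_{1i})=e_i$ for $i\neq1$ and $T(\varepsilon_{11})=2e_1$; in every case the $i$-th coordinate of $T(\varepsilon_{1i})$ is nonzero. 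I would also note that, since the trace on $M_n$ is the unnormalised $\operatorname{Tr}$ (as already used in Proposition \ref{TcobEquals2}), each $\varepsilon_{jj}$ is a projection of trace $1$, so $\|\varepsilon_{jj}\|_p=1$.

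The key is to feed $\Phi$ a well-chosen family of positive test elements. For each $i\in\{1,\dots,n\}$ put
\begin{equation*}
w_i=\begin{bmatrix} \varepsilon_{11} & \varepsilon_{1i}\\ \varepsilon_{i1} & \varepsilon_{ii}\end{bmatrix}\in L_p(M_n)\otimes M_2 .
\end{equation*}
Using $\varepsilon_{ab}\varepsilon_{cd}=\delta_{bc}\varepsilon_{ad}$ one checks that $w_i=v_iv_i^*$ with $v_i=\left[\begin{smallmatrix}\varepsilon_{1i}\\ \varepsilon_{ii}\end{smallmatrix}\right]$, so $w_i\ge0$; hence only \emph{positivity} of $\Phi$ (not full complete positivity) is needed. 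Applying $\Phi$ gives
\begin{equation*}
\Phi(w_i)=\begin{bmatrix} S_1(\varepsilon_{11}) & T(\varepsilon_{1i})\\ T(\varepsilon_{1i})^* & S_2(\varepsilon_{ii})\end{bmatrix}\ge0
\end{equation*}
in $l_n^q\otimes M_2$. Now $l_n^q\otimes M_2$ is the $L_q$-space of $\bigoplus_{k=1}^n M_2$, so its positive cone is coordinatewise positivity; reading off the $k=i$ coordinate block and using that the $i$-th coordinate of $T(\varepsilon_{1i})$ is $1$ (or $2$ when $i=1$), the $M_2$-positivity criterion yields $S_1(\varepsilon_{11})_i\,S_2(\varepsilon_{ii})_i\ge 1$.

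Finally I would combine these $n$ scalar inequalities with the norm bounds. Since $S_2$ is completely positive and $\|\varepsilon_{ii}\|_p=1$, the vector $S_2(\varepsilon_{ii})$ is positive with $\|S_2(\varepsilon_{ii})\|_q\le\|S_2\|\le C$, so its $i$-th coordinate satisfies $S_2(\varepsilon_{ii})_i\le C$; therefore $S_1(\varepsilon_{11})_i\ge 1/S_2(\varepsilon_{ii})_i\ge 1/C$ for every $i$. Summing over $i$ and using that $S_1(\varepsilon_{11})$ is a single fixed positive vector with $\|S_1(\varepsilon_{11})\|_q\le\|S_1\|\le C$,
\begin{equation*}
\frac{n}{C^{q}}\le\sum_{i=1}^{n}S_1(\varepsilon_{11})_i^{\,q}=\|S_1(\varepsilon_{11})\|_q^{\,q}\le C^{q},
\end{equation*}
so $C^{2q}\ge n$, i.e.\ $C\ge n^{1/2q}$, and taking the infimum over all decompositions gives $\|T\|_{dec}\ge n^{1/2q}$. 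The one genuinely non-routine point is the choice of the test elements $w_i$ together with the asymmetric bookkeeping: the $q$-norm of the \emph{single} vector $S_1(\varepsilon_{11})$ is controlled globally, while each $S_2(\varepsilon_{ii})$ only needs its single diagonal coordinate bounded by $C$. Getting these to interlock through the $2\times2$ positivity constraints $S_1(\varepsilon_{11})_iS_2(\varepsilon_{ii})_i\ge1$ is what produces the exponent $1/2q$, and this is where I expect the main care to be required.
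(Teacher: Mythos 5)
Your proof is correct, and it takes a genuinely different route from the paper's. The paper first uses that \(T\) is self-adjoint to invoke Lemmas 2.18 and 2.19 of \cite{ArKr}, replacing the two-map definition \eqref{Tdecomposable} of \(\|T\|_{dec}\) by the symmetric form \(\inf\{\|S\| \mid S \pm T \textrm{ completely positive}\}\); writing \(S(f)=\sum_k \langle b_k,f\rangle e_k\) with \(b_k \in M_n\), it extracts \(b_{1,11}\ge 2\) and, by compressing \(\left[\begin{smallmatrix} b_k & \varepsilon_{1k}+\varepsilon_{k1} \\ \varepsilon_{1k}+\varepsilon_{k1} & b_k \end{smallmatrix}\right]\) with \(e_k\) and \(e_1\), the determinant inequalities \(b_{k,kk}\,b_{k,11}\ge 1\), and then finishes with a case analysis according to whether some \(b_{k,11}\le n^{-1/2q}\). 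You bypass the symmetric reduction entirely: you take an arbitrary pair \((S_1,S_2)\) witnessing the decomposition, test \(\Phi\) on the rank-one positives \(w_i=v_iv_i^*\), and read off \(S_1(\varepsilon_{11})_i\,S_2(\varepsilon_{ii})_i\ge 1\) in the coordinatewise order of \(l_q^n\otimes M_2\) --- the same \(2\times 2\) determinant mechanism as in the paper, but with the two maps in asymmetric roles --- and your closing chain \(n/C^q\le\|S_1(\varepsilon_{11})\|_q^q\le C^q\) replaces the paper's case distinction by a single estimate. All your computations check out: \(T(\varepsilon_{1i})=e_i\) for \(i\ne 1\), \(T(\varepsilon_{11})=2e_1\) (so the \(i=1\) block gives \(S_1(\varepsilon_{11})_1S_2(\varepsilon_{11})_1\ge 4\ge 1\), as you note), \(w_i=v_iv_i^*\ge 0\), and \(S_2(\varepsilon_{ii})_i\le\|S_2(\varepsilon_{ii})\|_q\le C\) since \(\|\varepsilon_{ii}\|_p=1\). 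What your approach buys: it is self-contained (no appeal to self-adjointness of \(T\) or to the external lemmas of \cite{ArKr}), and, as you observe, it needs only positivity of \(\Phi\) rather than complete positivity, so it proves slightly more. What the paper's approach buys: the symmetric characterization \(\|T\|_{dec}=\inf\{\|S\|: S\pm T \textrm{ cp}\}\) is exactly the interface used afterwards in the counterexample at the end of Section 4, where a single \(S\) with \(S\pm T\) completely positive is compressed to \(S_j\pm T_j\); with your proof that step would have to be rephrased (easily) in terms of the pair \((S,S)\).
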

\begin{proof}
	Since \(T\) is self-adjoint, we can apply \cite{ArKr}, Lemma 2.18 and 2.19, and get
	\begin{equation*}
		\|T\|_{dec} = \inf \{ \|S\| \mid S: L_p(M_n) \rightarrow l_q^n, S \pm T \textrm{ is completely positive} \}.
	\end{equation*}
	Let \(S:L_p(M_n) \rightarrow l_q^n\) be a linear map such that \(S \pm T\) is completely positive. There exist \(b_1,\cdots, b_n \in M_n \) such that \(S(f) = \sum_{k=1}^{n}<b_k,f>e_k\) for all \(f \in L_p(M_n)\). Since \(S \pm T\) are positive, we have 
	\begin{equation*}
		\sum_{k=1}^{n}<b_k \pm (\varepsilon_{1k}+ \varepsilon_{k1}),f>e_k \ge 0 \textrm{ for all } f\in L_p(M_n)_+.
	\end{equation*}
	This means that \(b_k \pm (\varepsilon_{1k}+ \varepsilon_{k1}) \ge 0\) for \(k=1,\cdots,n\). Let \(b_k = [b_{k,ij}]\). For \(k=1\) we have \(b_1-2\varepsilon_{11} \ge 0\) which implies \(b_{1,11} \ge 2\). For \(k > 1 \) we have
	\begin{equation}\label{matrixBk}
	\begin{split}
		0 &\le \begin{bmatrix}e_k^t & 0 \\ 0 & e_1^t\end{bmatrix}
		\begin{bmatrix}
		b_k & \varepsilon_{1k} + \varepsilon_{k1} \\
		\varepsilon_{1k} + \varepsilon_{k1} & b_k
		\end{bmatrix}
		\begin{bmatrix}e_k & 0 \\ 0 & e_1\end{bmatrix} \\
		&= \begin{bmatrix}
		e_k^tb_ke_k & e_k^t\varepsilon_{k1}e_1 \\
		e_1^t\varepsilon_{1k}e_k & e_1^tb_ke_1
		\end{bmatrix} =
		\begin{bmatrix}
			b_{k,kk} & 1 \\ 1 & b_{k,11}
		\end{bmatrix}
	\end{split}
	\end{equation}
	We compute the determinant of the last matrix in \eqref{matrixBk} and get \(b_{k,kk} b_{k,11} \ge 1\). Since the diagonal elements of \(b_k\) are positive, we get \(b_{k,kk} \ge \frac{1}{b_{k,11}}\) for all \(k \ge 2\). For \(k=1,\cdots,n\) we have
	\begin{equation*}
		\|S(\varepsilon_{kk})\|_q^q = \Big\|\sum_{j=1}^{n}<b_j,\varepsilon_{kk}>e_j\Big\|_q^q = \Big\|\sum_{j=1}^{n}b_{j,kk} e_j\Big\|_q^q = \sum_{j=1}^{n}(b_{j,kk})^q.
	\end{equation*}
	For \(k = 1\) we have \(\|S(\varepsilon_{11})\|_q^q = \sum_{j=1}^{n}(b_{j,11})^q\). For \(k > 1\) we have \(\|S(\varepsilon_{kk})\|_q^q \ge (b_{k,kk})^q \ge \frac{1}{(b_{k,11})^q}\). Hence
	\begin{equation*}
		\|S\|^q \ge \max \left\lbrace \sum_{j=1}^{n} (b_{j,11})^q, \frac{1}{(b_{k,11})^q}, k=2,\cdots,n\right\rbrace.
	\end{equation*}
	If there is some \(k, 2 \le k \le n\) such that \(b_{k,11} \le n^{-1/2q}\), then
	\begin{equation}\label{oneBSmaller}
		\|S\|^q \ge \frac{1}{(b_{k,11})^q} \ge \sqrt{n}.
	\end{equation}
	If \(b_{k,11} \ge n^{-1/2q}\) for all \(k \ge 2\), we have 
	\begin{equation}\label{allBlarger}
	\begin{split}
		\|S\|^q &\ge \sum_{k=1}^{n}(b_{k,11})^q \ge 2^q + (n-1)n^{-1/2} \\
		&= \frac{2^q\sqrt{n} + n -1}{\sqrt{n}} \\
		&\ge \sqrt{n}
	\end{split}
	\end{equation}
	Combining \eqref{oneBSmaller} and \eqref{allBlarger}, we get
	\begin{equation*}
		\|S\| \ge n^{\frac{1}{2q}}.
	\end{equation*}
	Since \(S\) was arbitrary with \(S\pm T\) completely positive, this finishes the proof.
\end{proof}
Now we can show our counterexample. Let \(\mathcal{M} = \oplus_{k=1}^{\infty}M_k\). On \(\mathcal{M}\) we have the semifinite, normal, faithful trace \(\tau_1(x) = \sum_{k=1}^{\infty}Tr_k(x_k)\) where \(x = \oplus_{k=1}^{\infty} x_k\) and \(Tr_k\) is the usual trace on \(M_k\). Since for any projection \(e \in \mathcal{M},\) we have \(\tau_1(e)\ge 1\) and therefore every \(\tau_1\)-measurable operator affiliated with \(\mathcal{M}\) is a bounded operator. Hence for \(1 \le p < \infty,\) we have \(L_p(\mathcal{M})= \linebreak \lbrace x = \oplus_{k=1}^\infty x_k \mid \sum_{k=1}^{\infty}\|x_k\|_p < \infty\rbrace\) with norm \(\|x\|_p = (\sum_{k=1}^{\infty}\|x_k\|_p)^{1/p}\). Let \(\mathcal{N} = \oplus_{k=1}^\infty l_{\infty}^k\). Then \(L_p(\mathcal{N}) = \lbrace f = \oplus_{k=1}^\infty f_k \mid \sum_{k=1}^{\infty} \|f_k\|^p < \infty\rbrace\) with norm \(\|f\|_q = (\sum_{k=1}^{\infty}\|x_k\|_q)^{1/q}\). For \(1 \le p,q < \infty,~ q > 2p \) let
\begin{equation*}
	T: L_p(\mathcal{M}) \rightarrow L_q(\mathcal{N}), \quad T(\oplus_{k=1}^{\infty}x_k) = \oplus_{k=1}^{\infty} T_k(x_k)
\end{equation*}
where \(T_k\) is defined as in \eqref{definitionT}. We claim that \(T\) is completely order bounded and \(\|T\|_{cob} \le 2\). To show this, let \(n \in \mathbb{N}\) and \(x \in L_p(\mathcal{M} \otimes M_n), \|x\|_{p,n} \le 1 \). According to Theorem \ref{infIsMinInNorm}, there exist \(f, g \in L_(\mathcal{M})_+\) such that
\begin{equation*}
	\|f\|_p = \|g\|_p = \|x\|_{p,n} \textrm{ and } \begin{bmatrix}f \otimes 1_n & x \\ x^* & g \otimes 1_n\end{bmatrix} \ge 0.
\end{equation*}
Then we have \(x = \oplus_{k=1}^{\infty}x_k, x_k \in L_p(M_k) \otimes M_n, f = \oplus_{k=1}^{\infty} f_k, f_k \in L_p(M_k)_+\) and \(g = \oplus_{k=1}^{\infty} g_k, g_k \in L_p(M_k)_+\) with
\begin{equation*}
	\begin{bmatrix}f_k \otimes 1_n & x_k \\ x_k^* & g_k \otimes 1_n\end{bmatrix} \ge 0 \textrm{ for all } k \in \mathbb{N}.
\end{equation*}

By Proposition \ref{TcobEquals2}, we have \(\|T_k\|_{cob}\le 2\) for all \(k \in \mathbb{N}\). By definition of completely order boundedness, for all \(k \in \mathbb{N}\), there exist \(h_{1,k}, h_{2,k} \in (l_k^q)_+,\) such that
\begin{equation*}
	\begin{bmatrix}h_{1,k} \otimes 1_n & T_{k,n}(x_k) \\ T_{k,n}(x_k)^* & h_{2,k} \otimes 1_n\end{bmatrix}
 \ge 0 \textrm{ and } \|h_{1,k}\|_q, \|h_{2,k}\|_q \le \frac{1}{2}(\|f_k\|_p + \|g_k\|_p).
\end{equation*}
We put \(h_1 = \oplus_{k=1}^{\infty}h_{1,k}\) and \(h_2 = \oplus_{k=1}^{\infty}h_{2,k}\). Then \(\|h_1\|_q, \|h_2\|_q \le 2\) and
\begin{equation*}
	\begin{bmatrix}h_1 \otimes 1_n & T_n(x) \\ T_n(x)^* & h_2 \otimes 1_n\end{bmatrix} \ge 0
\end{equation*}
which shows that \(\|T\|_{cob}\le 2\). 

Next, suppose that \(T\) is decomposable. Then there exists a completely positive map \(S: L_p(\mathcal{M}) \rightarrow L_q(\mathcal{N}),\) such that \(S \pm T\) are completely positive. For every \(j \in \mathbb{N}\), the embedding \(I_j: L_p(M_j) \rightarrow L_p(\mathcal{M}), x \mapsto (\cdots,0,x,0,\cdots)\) and the projection \(P_j:L_q(\mathcal{N}) \rightarrow l_q^j,~ \oplus_{k=1}^{\infty} y_k \mapsto y_j\) are completely positive and have norm less than or equal to 1. We have \(T_j = P_j \circ T \circ I_j \) and put \(S_j = P_j \circ S \circ I_j\). Then \(S_j \pm T_j\) are completely positive. We apply Proposition \ref{TdecgreaterHtnaSqrtN}  and get
\begin{equation*}
	\infty > \|S\| \ge \|S_j\| \ge \|T_j\|_{dec} \ge j^{1/2q} \textrm{ for all } j \in\mathbb{N}
\end{equation*}
which gives a contradiction. Thus \(T\) is not decomposable.

Erwin Neuhardt\\
Faculty of Computer Science\\
Hochschule Schmalkalden\\
D-98574 Schmalkalden\\
Germany\\
e-mail: e.neuhardt@hs-sm.de
\end{document}